\documentclass[10pt,times,letter]{article}
\usepackage[margin=1in]{geometry}

\usepackage{amsmath, amsfonts,amsthm,amssymb,bm}
\usepackage{dsfont}

\usepackage[english]{babel}

\usepackage[normalem]{ulem}

\usepackage{latexsym,amssymb,amsfonts,graphicx}
\usepackage{amsmath,dsfont}
\usepackage{verbatim}
\usepackage{mathrsfs}
\usepackage{bm}
\usepackage{color}
\usepackage{epsfig}

\usepackage{url}
\usepackage{bm}
\usepackage{natbib}

\usepackage[colorlinks,linkcolor=blue,citecolor=blue,anchorcolor=blue]{hyperref}

\newcommand{\Px}{ \mathbb{P} }
\newcommand{\Qx}{ \mathbb{Q} }
\newcommand{\N}{ \mathbb{N} }
\newcommand{\Ex}{ \mathbb{E} }
\newcommand{\ExQ}{ \mathbb{E}^Q }

\def\esssup_#1{\underset{#1}{\Xi}}
\def\essinf_#1{\underset{#1}{\mathrm{ess\,inf\, }}}
\def\argmax_#1{\underset{#1}{\mathrm{arg\,max\, }}}
\def\argmin_#1{\underset{#1}{\mathrm{arg\,min\, }}}

\newcommand{\Fx}{\mathbb{F} }

\newcommand{\too}{-\!\!\!\to}
\newcommand{\lc}{\langle}
\newcommand{\rc}{\rangle}
\newcommand{\UxFx}{\mathbb{U}^{\Px,\Fx}}
\newcommand{\WINFTY}{{\cal W}_{\Omega_{\infty},2}}

\newcommand{\F}{\mathcal{F}}

\newcommand{\R}{\mathbb{R}}

\newcommand{\Afg}{{\bf(A$_{\varepsilon,f,\rho}$)}}

\newcommand{\Anu}{{\bf(A$_{\nu}$)}}

\newcommand{\Atheta}{{\bf(A$_{\Theta}$)}}
\newcommand{\Asi}{{\bf(A$_{\sigma}$)}}

\newtheorem{theorem}{Theorem}[section]
\newtheorem{definition}{Definition}[section]

\newtheorem{proposition}[theorem]{Proposition}
\newtheorem{remark}[theorem]{Remark}
\newtheorem{lemma}[theorem]{Lemma}
\newtheorem{corollary}[theorem]{Corollary}

\allowdisplaybreaks[4]

\definecolor{Red}{rgb}{1.00, 0.00, 0.00}

\definecolor{DRed}{rgb}{0.5, 0.00, 0.00}

\definecolor{Blue}{rgb}{0.00, 0.00, 1.00}

\definecolor{Green}{rgb}{0.0, 0.4, 0.0}

\title{Large Sample Mean-Field Stochastic Optimization
\footnote{We are grateful to Nicholas Garcia Trillos, and Weinan E for valuable feedback. We would also like to thank the participants of the SIAM-FM 21 Conference, and of the 2021 SIAM Annual Meeting for valuable comments. }
}
\author{
Lijun Bo \thanks{Email: lijunbo@xidian.edu.cn, School of Mathematics and Statistics, Xidian University, Xi'an, Shaanxi Province, 710126, China} \and
Agostino Capponi \thanks{E-mail: ac3827@columbia.edu, Department of Industrial Engineering and Operations Research, Columbia University, New York, 10027, NY, USA.} \and
Huafu Liao \thanks{E-mail: huafu.liao@hu-berlin.de, Department of Mathematics, Humboldt-Universit\"at zu Berlin, Unter den Linden 6, 10099 Berlin.}}

\begin{document}
\maketitle

\begin{abstract}

We study a class of sampled stochastic optimization problems, where the underlying state process has diffusive dynamics of the mean-field type. We establish the existence of optimal relaxed controls when the sample set has finite size. The core of our paper is to prove, via $\Gamma$-convergence, that the minimizer of the finite sample relaxed problem converges to that of the limiting optimization problem.  We connect the limit of the sampled objective functional to the unique solution, in the trajectory sense, of a nonlinear Fokker-Planck-Kolmogorov (FPK) equation in a random environment. We highlight the connection between the minimizers of our optimization problems and the optimal training weights of a deep residual neural network.

\vspace{0.3 cm}

\noindent{\textbf{AMS 2000 subject classifications}: {93E35, 93E20, 60F05}}

\vspace{0.3 cm}

\noindent{\textbf{Keywords and phrases}:}\quad {Mean-field stochastic optimization; relaxed controls; Fokker-Planck-Kolmogorov equation; Gamma-convergence; deep residual neural networks}.
\end{abstract}

\section{Introduction}

We investigate a class of sampled control problems, where the underlying state process has stochastic dynamics of the mean-field type, but the control process is the same for all data samples. Given an original probability space $(\Omega,\F,\Fx,\Px)$ with filtration $\Fx=(\F_t)_{t\in[0,T]}$ satisfying the usual conditions, we consider a $\theta$-controlled state process $\tilde{X}^{\theta}= (X^{\theta,1}(t),\ldots,X^{\theta,N}(t))_{t\in[0,T]}$. For $i=1,\ldots,N$, $X^{\theta,i}=(X^{\theta,i}(t))_{t\in[0,T]}$ is $d$-dimensional and has dynamics specified by
\begin{align}\label{eq:modeli}
\displaystyle dX^{\theta,i}(t) = f\left(t, \theta(t),X^{\theta,i}(t),\frac{1}{N}\sum_{j=1}^N\rho(X^{\theta,j}(t))\right)dt + \varepsilon^idW^i(t),
\end{align}
with initial condition $X^{\theta,i}(0) = X^{i}(0)$. We assume that {$f(t,\theta,x,\eta):[0,T]\times\R^m\times\R^d\times\R^{q}\to\R^d$}, $\rho:\R^d\to\R^q$, and  $W^i=(W^i(t))_{t\in[0,T]}$, $i\in\N$, are independent $p$-dimensional Brownian motions, with $\varepsilon^i\in\R^{d\times p}$. We consider a population risk minimization criterion:
\begin{align}\label{eq:control2}
\inf_{\theta\in\UxFx}J_N(\theta):=\inf_{\theta\in\UxFx}\Ex\left[L_N(\tilde{X}^{\theta}(T),\tilde{Y}(0))+\int_0^TR_N(\theta(t),\theta'(t);\tilde{X}^{\theta}(t),\tilde{Y}(0))dt\right].
\end{align}
Above, $\tilde{Y}(0) :=(Y^{1}(0),\ldots,Y^{N}(0))$ with $Y^i(0)$ being a $d$-dimensional random variable for $i=1,\ldots,N$, the notation $\theta'(t)$ denotes the first-order weak derivative w.r.t $t$ if $\theta\in{\cal H}_m^1$, and ${\cal H}_m^p$ denotes  the Sobolev space $W^{1,p}((0,T);\R^m)$ for $p\in\N$. We also use $\UxFx$ to denote the admissible strict control set under $\Px$, which will be specified in Section~\ref{sec:sampledcontrolproblem}. Further, we assume both the terminal loss function $L_N$ and the regularizer function $R_N$ to be of quadratic type.

The objective of our paper is to establish that the sequence of minimizers of the sampled objective functionals $(J_N)_{N=1}^{\infty}$ converges, as $N$ increases, to the minimizer of a limiting objective functional $J$. Because the  minimizing sequence of strict controls is {\it not} guaranteed to be precompact in $L^2(\Omega;{\cal H}_m^1)$, one cannot guarantee the convergence of the sequence of strict control minimizers to the strict control minimizer of the limiting optimization problem. We overcome this challenge by considering relaxed, rather than strict, controls and characterize a global solution to the relaxed control problem. Relaxed controls were first studied by \cite{ElKaroui87}. Using Krylov's Markovian selection theorem, \cite{HaussmannLepeltier90} established the existence of Markovian feedback policies for relaxed optimal control problems. The relaxed stochastic maximum principle was developed, respectively in singular and partially observed optimal control problems, by \cite{Bahlalietal2007} and \cite{AhmedCharalambous13}. More recently, relaxed controls have been applied to analyze the existence of Markovian equilibria and relaxed $\epsilon$-Nash equilibrium of mean field games, both in the presence of idiosyncratic and common noise; see \cite{Lacker15}, \cite{Lacker16}, \cite{CarmonaDelLa2016} and \cite{CarmonaDelarue18}.

Our {\it main result} is to show that, as $N$ gets larger, the sequence of relaxed controls which minimize the finite-dimensional sampled objective functional converges to the relaxed control minimizer of the limiting objective functional. To the best of our knowledge, this convergence result is novel in the literature, and has not been established even in the case where the underlying state dynamics are purely diffusive and have no mean-field interaction.

We next elucidate on the main technical steps and contributions. First, we prove the existence of optimal relaxed solutions to the finite sampled optimization problem stated in Eq.~\eqref{eq:relaxed-controlQ} by establishing the precompactness of the corresponding minimizing sequence (c.f. Section~\ref{sec:existence-sol-samploptim}). For any relaxed control $Q_N$ of the finite sample problem with $N$ data, we use the coordinate process consisting of Brownian drivers, input-output samples and control process $\theta_N$ to generate a sequence of empirical measure-valued processes $(\mu^N)_{N=1}^{\infty}$ (see Eq.~\eqref{eq:empiricalpmN}). We then show that this process admits a limit, which can be characterized by the joint distribution of the initial sample data, the control process and the unique solution of a nonlinear FPK equation in a random environment {(c.f.~Section~\ref{sec:limitLargeSampleJN})}. We introduce the limiting objective functional $J$, and define it in terms of the joint law of initial data, control process and state process. The main result presented in Theorem~\ref{thm:miniconver} is as follows: as $N\to\infty$, the sequence of relaxed controls which  minimize the sampled objective functional $J_N$ converges to the relaxed control which minimizes  the limiting objective functional $J$. To prove this result, we show that $J_N$ {\it $\Gamma$-converges} to $J$ on the space of relaxed controls {(c.f.~Section~\ref{sec:Gamma-convergence})}.\\

\noindent {\bf Connections to Deep Residual Neural Network (ResNet).} The class of optimization problems considered in our study can be used to characterize the optimal training weights in a deep ResNet fed with many training data. {ResNets} were first introduced in the influential study of \cite{HeRenSun2016} to remedy against the degradation problem faced by deep convolutional neural networks, where the training accuracy degrades rapidly as the network depth increases.

Leveraging the optimal control problem formulation of a deep ResNet, first proposed in \cite{E17}, and rigorously analyzed in \cite{EHanLi18}, we demonstrate how the stochastic state dynamics with mean-field interactions captures two types of {\it regularization} during the training process. The first type is {\it regularization by noise}, and consists in injecting noise into the transition function at each layer of the ResNet. This limits the amount of information carried by the units, and acts as a form of regularization because it averages over local neighborhoods of the transition function.  The second type of regularization captured by our framework is the so-called {\it batch normalization} (\cite{IoffeSzegedy2015}). Such technique was designed to remedy against the fact that the distribution of each layer's inputs changes during training, as the parameters of previous layers change. This is achieved via a normalization step that fixes the means and variances of layer inputs. Such normalization requires sampling inputs from a distribution, and leads to mean-field interactions in the dynamics of the deep {ResNet}.

There has been a growing literature on optimal control approaches to deep {ResNets}, modeled as continuous time dynamical systems. The work by \cite{ThGe2018} provide a discrete-to-continuum Gamma-convergence result for the objective function of such a stylized {ResNet}, albeit in a deterministic setup. They consider a finite training set and show that the objective function of the {ResNet} converges, as the number of layers goes to infinity, to a variational problem specified by a set of nonlinear differential equations. Unlike \cite{ThGe2018} who compare discretized multi-layer and continuous layer ResNets, the starting point of our analysis is a continuous layer ResNet. Our objective is to analyze the relation between a ResNet fed with finitely many training samples and one fed with infinitely many samples. \cite{Jabir} use a neural ODE to describe the learning process of a network. They derive the Pontryagin's optimality principle for the relaxed control problem which yields the weights of a network with finite sample size, and then provide convergence guarantees for the gradient descent algorithm. \cite{Larsson} also view deep neural networks as discretizations of controlled ODEs.

Some studies have analyzed the performance of neural networks consisting of {a single} layer. \cite{SS1} model  single-layer networks as limits of the sequence of empirical measures of interacting particles. They derive an evolution equation for the empirical measure, and update parameters through the stochastic gradient descent algorithm (SGD). \cite{Chizat} consider a neural network with a single large hidden layer. They study the performance of a non-convex particle gradient descent algorithm as the number of particles grows large, and assume a quadratic loss function to be minimized.

Other studies have considered deeper neural networks. \cite{Montanari} study two-layer neural networks, and show that the dynamics of stochastic gradient descent algorithms used for parameter learning can be approximated by a nonlinear partial differential equation (PDE). \cite{Rotskoff2018} propose an interacting particle systems model which nests multi-layer neural networks. Therein, to overcome the difficulty of minimizing the training error over the set of parameters, they study the minimum of the loss function over its empirical distribution. \cite{Du19} study a deep overparameterized {ResNet}, and prove that gradient descent achieves zero training loss. \cite{SS2} characterize multi-layer neural networks as the number of hidden layers grows large, and the number of stochastic gradient descent (SGD) iterations grows to infinity. We refer to \cite{Mallat16} for a mathematical treatment of deep neural networks, with a focus on convolutional architectures.\\

The remainder of this paper is organized as follows. In Section~\ref{trainingRNN}, we  formulate the sample-based
stochastic control problem and its corresponding relaxed form. In Section~\ref{sec:existence-sol-samploptim},  we establish the well-posedness of the sampled relaxed control problem  for finite sample size. In Section~\ref{sec:limitLargeSampleJN}, we study the convergence of the sampled objective functional under a suitable Wasserstein metric, as the number of samples grows to infinity. In Section~\ref{sec:Gamma-convergence}, we prove that the sequence of minimizers of sampled objective functionals converges to the minimizer of the limiting objective functional using  $\Gamma$-convergence. Section~\ref{sec:appresnet} shows how the proposed control framework can be used to model the training process of a deep ResNet. Section~\ref{sec:concludes} concludes. Some proofs and additional auxiliary results are delegated to an Appendix.

\section{Sampled Stochastic Control Problem and its Relaxation}\label{trainingRNN}

Consider the stochastic dynamical system given in Eq.~\eqref{eq:modeli}. The $\Fx$-adapted process $\theta=(\theta(t))_{t\in[0,T]}$ is the control strategy, which will be optimally chosen to minimize the objective criterion introduced later in this section.

We impose the following assumptions to ensure that the sampled controlled system described by \eqref{eq:modeli} is well posed:
\begin{itemize}
	\item[{\Afg}]
	\begin{enumerate}
		\item[(i)] there exists a global constant $K>0$ such that $|\varepsilon^i|\leq K$ for all $i\in\N$;
		\item[(ii)] the function $[0,T]\ni t\to f(t,0,0,0)$ is continuous;
		\item[(iii)] the function {$f(t,\theta,x,\eta):[0,T]\times\R^m\times\R^d\times\R^{q}\to\R^d$} is Lipschitz  continuous in $(\theta,x,\eta)$ uniformly in $t$, i.e., for $(\theta_1,x,\eta)$, $(\theta_2,y,\xi)\in\R^m\times\R^d\times\R^{q}$,
		\begin{align}
		|f(t,\theta_1,x,\eta)-f(t,\theta_2,y,\xi)|\leq[f]_{{\rm Lip}}[|\theta_1-\theta_2|+|x-y|+|\eta-\xi|],
		\end{align}
		where $[f]_{{\rm Lip}}$ is the Lipschitz coefficient of $f$ which is independent of the time variable $t$;
		\item[(iv)] the function $\rho:\R^d\to\R^q$ is Lipschitz continuous.
	\end{enumerate}
\end{itemize}
The Lipschitz conditions imposed in (iii)-(iv) of Assumption {\Afg} guarantee the existence of a unique strong solution of the stochastic system~\eqref{eq:modeli}.

\subsection{Sampled Control Problem}\label{sec:sampledcontrolproblem}

We begin by formulating the strict sampled optimization problem, and then introduce the corresponding relaxed version. The control process $\theta=(\theta(t))_{t\in[0,T]}$ takes values on $\Theta\subset\R^m$ in an admissible set $\UxFx$, and  {is obtained by minimizing the population risk criterion given by \eqref{eq:control2}}. We assume that
\begin{align}\label{eq:zetai}
	{\zeta^i:=(X^i(0),Y^i(0))\in\Xi_K,}
\end{align}
where {$\Xi_K:=[-K,K]^{2d}$} for a globally positive constant $K$. This means that the initial samples are assumed to have compact support. For $(\tilde{x},\tilde{y},\theta,\theta')\in(\R^{d})^N\times(\R^{d})^N\times\R^m\times\R^m$, define
\begin{align}\label{eq:LNRN}
    L_N(\tilde{x},\tilde{y}):=\frac{1}{N}\sum_{i=1}^NL(x^i,y^i),\quad R_N(\theta,\theta';\tilde{x},\tilde{y}):=\frac{1}{N}\sum_{i=1}^NR(\theta,\theta';x^i,y^i).
\end{align}
Above, $L:\R^d\times\R^d\to\R_+$ denotes the terminal loss function, and $R:\R^m\times\R^m\times\R^d\times\R^d\to\R_+$ is the regularizer of the control problem. We assume $(L,R)$ to be of the quadratic form, i.e., for $\alpha,\lambda_1,\lambda_2>0$ and $\beta\geq0$,
\begin{align}\label{eq:squareLR}
L(x,y):=\alpha|x-y|^2,\quad R(\theta,\theta';x,y):=\lambda_1|\theta|^2+\lambda_2|\theta'|^2+\beta|x-y|^2,
\end{align}
for $(x,y,\theta,\theta')\in\R^d\times\R^d\times\R^m\times\R^m$. Taking the ${\cal H}_m^1$-regularizer into account, the admissible set $\UxFx$ is defined as:
\begin{align}\label{eq:calU}
\UxFx:=\left\{\theta\in L^2(\Omega;{\cal H}_m^1);\ \theta~{\rm is\ }\Fx\mbox{-}{\rm adapted~and}~\theta\in\Theta,~{\rm a.s.~on~}(0,T)\times\Omega\right\}.
\end{align}
The admissible set $\UxFx$ is usually referred to as the {\it strict control} set under the original probability space. Throughout the paper, we assume that the state space of the parameter process $\theta$ satisfies the following property:
\begin{itemize}
\item[{\Atheta}] $\Theta\subset\R^m$ is a compact set, which is {\it not} necessarily convex.
\end{itemize}

The objective of our paper is to establish the convergence of minimizers of the sampled objective functional $J_N$ to the minimizer of a limiting objective functional $J$ as $N\to\infty$. We are confronted with two main technical challenges. First, under {\Atheta}, the problem \eqref{eq:control2} is a control problem with a non-convex policy space, hence in general a global optimum $\theta^* \in \UxFx$ is not guaranteed to exist. Specifically, the existence of an optimal solution to the optimization problem \eqref{eq:control2} does not follow from standard compactness techniques used in deterministic optimization problems. Second, even if the set $\Theta$ were convex, the convergence of the sequence of strict control minimizers to the strict control minimizer of the limiting optimization problem is not guaranteed. This is because the associated minimizing sequence of strict controls is, in general, {\it not} precompact in $L^2(\Omega;{\cal H}_m^1)$.

\subsection{Sampled Relaxed Control Problem}

We use relaxed controls to bypass the key technical challenges described at the end of the previous section. The set of relaxed controls is the space of probability measures on the data {$\zeta=(\zeta^i)_{i=1}^{\infty}$, the control process $\theta$, and the infinite-dimensional Browian motion $(W^i)_{i=1}^{\infty}$}. This is a convex set, and compact with weak topology if $\Theta$ is compact. As a result, a global solution to the relaxed control problem is always guaranteed to exist. Moreover, the minimizing sequence of relaxed controls is precompact in the space of probability measures under the weak topology. We are then able to establish the convergence of the sequence of (relaxed) minimizers of the sampled objective functional $J_N$ to the minimizer of the limiting objective functional $J$, as $N$ grows to infinity. We  provide an expression for the limiting objective functional $J$ (see Eq.~\eqref{eq:control25JQ}), and relate its representation to the solution of a FPK equation in a random environment.

We begin by establishing a canonical measurable space $(\Omega_{\infty},\F_{\infty})$ via an infinite product space such that the coordinate process $(\zeta,W,\theta)=((\zeta^i)_{i=1}^{\infty},(W^i)_{i=1}^{\infty},\theta)$ can be constructed {in the probability space $(\Omega_{\infty},\F_{\infty},Q)$, where $Q$ is referred to as a relaxed control}. We consider the  natural filtration $\Fx=(\F^{\zeta,W,\theta}_t)_{t\in[0,T]}$ generated by $(\zeta,W,\theta)$, which is the completion of the filtration flow $\sigma(\zeta)\vee\sigma(W(s),\theta(s);\ s\leq t)$ for $t\in[0,T]$. Then, the set ${\cal Q}(\nu)$ of relaxed controls is a collection of probability measures $Q$ on $(\Omega_{\infty},\F_{\infty})$ such that, under $(Q,\Fx)$, the initial sample data $\zeta$ has a given law $\nu$; $W$ is a sequence of independent Wiener processes; and $\theta\in\mathbb{U}^{Q,\Fx}$. We will formally specify ${\cal Q}(\nu)$ in Definition~\ref{def:relax-sol}. Changing the control variable $\theta\in\UxFx$ in \eqref{eq:control2} to $Q\in{\cal Q}(\nu)$, the sampled objective functional is given by
\begin{align}\label{eq:control2Q}
J_N(Q):=\Ex^Q\left[L_N(\tilde{X}^{\theta}(T),\tilde{Y}(0))+\int_0^TR_N(\theta(t),\theta'(t);\tilde{X}^{\theta}(t),\tilde{Y}(0))dt\right],
\end{align}
where, for $i=1,\ldots,N$, the state process {$(X^{\theta,i}(t))_{t\in[0,T]}$} is the strong solution to Eq.~\eqref{eq:modeli} driven by $(\zeta,W,\theta)$. Thus, the relaxed control problem in the finite sample case can be formulated as:
\begin{align}\label{eq:relaxed-controlQ}
\alpha_N:=\inf_{Q\in{\cal Q}(\nu)}J_N(Q).
\end{align}
We call $Q^*\in{\cal Q}(\nu)$ an optimal (relaxed) solution of the optimization problem \eqref{eq:relaxed-controlQ} if $J_N(Q^*)=\alpha_N$.
\begin{remark}\label{rem:sampleQ}
For a fixed sample size $N$, we only use the first $N$ components of the vector $(\zeta,W)=(\zeta^i,W^i)_{i=1}^{\infty}$ in the sampled objective functional $J_N(Q)$, where $Q\in{\cal Q}(\nu)$. As $N$ tends to infinity, the sampled objective functional $J_N(Q)$ will eventually include all components of $(\zeta,W)=(\zeta^i,W^i)_{i=1}^{\infty}$.
\end{remark}

\section{Optimal Relaxed  Solutions of Sampled Optimization Problem}\label{sec:existence-sol-samploptim}

This section establishes the existence of optimal relaxed solutions for the sampled optimization problem \eqref{eq:relaxed-controlQ}. We work with the canonical probability space, which needs to be established in terms of an infinite product space. To wit, define
\begin{align}\label{eq:OmegaNFN}
\Omega_\infty:=\Xi_K^{\mathbb{N}}\times {\cal C}_{p}^{\mathbb{N}}\times {\cal H}_m^1,\quad \F_\infty:={\cal B}(\Omega_\infty),
\end{align}
where ${\cal C}_d:=C([0,T];\R^d)$ denotes the space of continuous functions from $[0,T]$ to $\R^d$ equipped with the uniform norm $\|h\|_T:=\sup_{t\in[0,T]}|h(t)|$ for any $h\in{\cal C}_d$. For $\zeta=(\zeta^i)_{i=1}^{\infty}$ and $W=(W^i)_{i=1}^{\infty}$, we use $(\zeta,W,\theta)$ to denote the identity map on $\Omega_{\infty}$. Let $\Fx=(\F^{\zeta,W,\theta}_t)_{t\in[0,T]}$ be the complete natural filtration generated by $(\zeta,W,\theta)$. It follows from the Sobolev embedding theorem (see, e.g. \cite{Evans2010}) that $h\in{\cal H}_m^1$ if and only if $h$ equals ($dt$-a.e.) to an absolutely continuous function whose ordinary derivative (which exists $dt$-a.e.) belongs to ${\cal L}_m^2$. Here, for $p\geq1$, ${\cal L}_m^p:=L^p((0,T);\R^m)$ equipped with the $L^p$-norm $\|h\|_{{\cal L}_m^p}:=\{\int_0^T|h(t)|^pdt\}^{1/p}$ for any $h\in{\cal L}_m^p$. Then, we treat ${\cal H}_m^1$ as a subset of ${\cal C}_{m}$. We next endow the space $\Omega_{\infty}$ with the following metric: for $(\gamma,w,\vartheta)$ and $(\hat{\gamma},\hat{w},\hat{\vartheta})\in \Omega_{\infty}$, define
\begin{align}\label{eq:dinfty}
d_{\infty}((\gamma,w,\vartheta),(\hat{\gamma},\hat{w},\hat{\vartheta})):=d_{1}(\gamma,\hat{\gamma})+d_2(w,\hat{w})+d_3(\vartheta,\hat{\vartheta}).
\end{align}
The above metrics $d_i$, for $i=1,2,3$, are given by
\begin{align}\label{eq:metricd123}
d_1(\gamma,\hat{\gamma})&=\sum_{i=1}^{\infty}2^{-i}\frac{|\gamma_i-\hat{\gamma}_i|}{1+|\gamma_i-\hat{\gamma}_i|},\quad \gamma=(\gamma_i)_{i=1}^{\infty},\ \hat{\gamma}=(\hat{\gamma}_i)_{i=1}^{\infty}\in\Omega_{\infty}^0;\nonumber\\
d_2(w,\hat{w})&=\sum_{i=1}^{\infty}2^{-i}\frac{\|w_i-\hat{w}_i\|_{T}}{1+\|w_i-\hat{w}_i\|_{T}},\quad w=(w_i)_{i=1}^{\infty},\ \hat{w}=(\hat{w}_i)_{i=1}^{\infty}\in {\cal C}_p^{\N};\\
d_3(\vartheta,\hat{\vartheta})&=\big\|\vartheta-\hat{\vartheta}\big\|_{T},\quad \vartheta,\hat{\vartheta}\in{\cal H}_m^1.\nonumber
\end{align}
The space $(\Omega_{\infty},d_{\infty})$ is a Polish space (see, e.g. Section 3.8 in \cite{AliprantisBorder2006}, page 89). We next define the relaxed controls formally used in this paper. Let $\nu\in{\cal P}(\Xi_K^{\mathbb{N}})$ be a given initial sample law.
\begin{definition}\label{def:relax-sol}
	The set of relaxed controls ${\cal Q}(\nu)$ is defined to be the set of probability measures $Q$ on $(\Omega_{\infty},\F_{\infty})$ satisfying the following three properties:
	\begin{itemize}
		\item[{\rm(i)}] $Q\circ\zeta^{-1}=\nu$;
		\item[{\rm(ii)}] $W$ consists of a sequence of independent Wiener processes on $(\Omega_{\infty},\Fx,Q)$;
		\item[{\rm(iii)}] $\theta$ is an $\Fx$-adapted and $\Theta$-valued process with $\lc Q,\|\theta\|_{{\cal H}_m^1}^2\rc:=\Ex^Q[\|\theta\|_{{\cal H}_m^1}^2]<\infty$, i.e., $\theta\in\mathbb{U}^{Q,\Fx}$.
	\end{itemize}
For $Q\in{\cal Q}(\nu)$, we refer to $(\zeta,W,\theta)$ as the coordinate process corresponding to $Q$.
\end{definition}

The next proposition, whose proof is reported in the Appendix, establishes the existence of optimal relaxed solutions to the stochastic optimization problem \eqref{eq:relaxed-controlQ}.
\begin{proposition}\label{prop:reaxed-sol}
Let {\Afg} and {\Atheta} hold. Then, there exists an optimal solution to the relaxed control problem~\eqref{eq:relaxed-controlQ} in the finite sample case.
\end{proposition}

\section{Limit of Large Sampled Optimization Problem}\label{sec:limitLargeSampleJN}

In this section, we establish the convergence of the sampled objective functional $J_N$ given by \eqref{eq:control2Q} as the sample size $N$ tends to infinity. We first establish a general convergence result for a class of empirical processes which arise in our sampled controlled dynamic system. The convergence result is then used to (i) establish the limiting behavior of $J_N$ in Section~\ref{sec:limitsample}; and (ii) prove the Gamma-convergence of $J_N$ to $J$ in Section~\ref{sec:Gamma-convergence}.

\subsection{Convergence of Empirical Processes for Large Samples}\label{sec:convergencesample}

For $N\in\N$, let $Q_N\in{\cal Q}(\nu)$ where $\nu\in{\cal P}(\Xi_K^{\mathbb{N}})$ is the initial sample law. Let $(\zeta_N,W_N,\theta_N)$ be the coordinate process corresponding to $Q_N$ as in Definition~\ref{def:relax-sol}. Moreover, let $\tilde{X}_N=(X_N^{1}(t),\ldots,X_N^{N}(t))_{t\in[0,T]}$ be a solution of the following SDE:
\begin{align}\label{eq:modelinonstark}
\displaystyle dX_N^{i}(t) = f\left(t, \theta_N(t) ,X_N^{i}(t),\frac{1}{N}\sum_{j=1}^N\rho(X_N^{j}(t))\right)dt + \varepsilon^idW_N^{i}(t).
\end{align}
In other words, $X_N^i$ satisfies the SDE~\eqref{eq:modeli} driven by $(\zeta_N,W_N,\theta_N)$. Define $E:=\R^{d\times p}\times\R^d\times\R^d$ and introduce the following empirical measure-valued process given by
\begin{align}\label{eq:empiricalpmN}
\mu^N(t):=\frac{1}{N}\sum_{i=1}^N\delta_{(\xi_N^i,X_N^{i}(t))},\quad {\rm for}\ t\in[0,T].
\end{align}
Above, {for $i\geq1$, $\xi_N^i:=(\varepsilon^i,Y_N^i(0))\in\R^{d\times p}\times\R^d$}. We will show later that, for $N\geq1$, we can view $\mu^N=(\mu^N(t))_{t\in[0,T]}$ as a sequence of $\hat{S}:=C([0,T];{\cal P}_2(E))$-valued random variables, where ${\cal P}_p(E)$ is the $p$-order Wasserstein space with underlying metric space $(E,d_E)$. We also recall that {$C([0,T];{\cal P}_2(E))$ is the space of continuous ${\cal P}_2(E)$-valued functions defined on $[0,T]$}.  For $N\geq1$, we define the following joint distribution:
\begin{align}\label{eq:QxN}
\Qx^N:=Q_N\circ(\mu^N(0),\theta_N,\mu^N)^{-1}.
\end{align}
The main result of this section is to characterize the limiting behavior of the sequence of joint laws $(\Qx^N)_{N=1}^{\infty}$ (see Theorem~\ref{thm:limitP}). This is recovered as the unique solution of a FPK equation in a random environment. To start with, let ${\cal D}:=C_0^{\infty}(\R^m)$ be the space of test functions with its dual space given by ${\cal D}'$. We introduce a related parameterized operator defined on ${\cal D}$. Formally, for {$(\theta,\eta)\in{\cal C}_m\times\R^q$}, and {$(s,e)=(s,(\varepsilon,y,x))\in[0,T]\times E$}, define
{\begin{align}\label{eq:Atthetaeta}
{\cal A}^{\theta,\eta}\varphi(s,e)&:=\nabla_s\varphi(s,e)+f(s,\theta(s),x,\eta)^{\top}\nabla_x\varphi(s,e)+\frac{1}{2}{\rm tr}\left[\varepsilon\varepsilon^{\top}\nabla_{xx}^2\varphi(s,e)\right],\ \varphi\in{\cal D},
\end{align}}
where $\nabla_x\varphi:=(\frac{\partial \varphi}{\partial x_1},\ldots,\frac{\partial \varphi}{\partial x_d})^{\top}$ and $\nabla_{xx}^2\varphi:=(\frac{\partial^2\varphi}{\partial x_k\partial x_r})_{k=1,\ldots,q}^{r=1,\ldots,d}$.

\begin{theorem}\label{thm:limitP}
Let {\Afg} and {\Atheta} hold. Suppose further that, for some $\vartheta_0\in{\cal P}({\cal P}_2(E)\times{\cal C}_m)$,
\begin{align}\label{eq:QNvartheta0}
Q_N\circ(\mu^N(0),\theta_N)^{-1}\Rightarrow\vartheta_0,\quad N\to\infty.
\end{align}
Then $(\Qx^N)_{N=1}^{\infty}$ defined by \eqref{eq:QxN} converges in ${\cal P}_2({\cal P}_2(E)\times{\cal C}_m\times\hat{S})$. Moreover, if the law of a ${\cal P}_2(E)\times{\cal C}_m\times\hat{S}$-valued r.v. $(\hat{\mu}_0,\hat{\theta},\hat{\mu})$ defined on some probability space $(\hat{\Omega},\hat{\F},\hat{\Px})$ is a limit point of $(\Qx^N)_{N=1}^{\infty}$, then, $\hat{\Px}$-a.s., $\hat{\mu}$ is the unique solution to the following FPK equation in a random environment: $\hat{\mu}(0) = \hat{\mu}_0$, and for $t\in(0,T]$,
\begin{align}\label{eq:mustar}
\left\lc\hat{\mu}(t),\varphi(t)\right\rc - \left\lc\hat{\mu}(0),\varphi(0)\right\rc - \int_0^t\left\lc\hat{\mu}(s),{{\cal A}^{\hat{\theta},\lc\hat{\mu}(s),\rho\rc}}\varphi(s)\right\rc ds=0,\quad \forall~\varphi\in{\cal D}.
\end{align}
\end{theorem}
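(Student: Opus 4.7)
The plan is to organize the proof into three stages: (i) establish relative compactness of $(\Qx^N)_{N\geq 1}$ in the quadratic Wasserstein space over ${\cal P}_2(E)\times{\cal C}_m\times\hat{S}$; (ii) identify any limit point as a solution of the random FPK equation \eqref{eq:mustar}; and (iii) prove trajectory-wise uniqueness of \eqref{eq:mustar}, so that the full sequence $(\Qx^N)$ has a unique limit point.

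For stage (i), I would first derive uniform second-moment bounds. Since $\zeta\in\Xi_K$ and $|\xi^i|\leq K$ by \Afg, Gronwall's lemma applied to \eqref{eq:modelinonstark}, together with the $L^2$ bound on $\theta_N$ inherited from membership in ${\cal Q}(\nu)$, yields $\sup_N\sup_{i\leq N}\Ex^{Q_N}[\sup_{t\leq T}(|X_N^i(t)|^2+|Z_N^i(t)|^2)]<+\infty$. This ensures $\mu^N(t)\in{\cal P}_2(E)$ for every $t$, and an analogous fourth-moment estimate supplies the uniform integrability needed to upgrade weak convergence to quadratic Wasserstein convergence. Next, applying It\^o's formula to $\varphi(t,(\xi_N^i,Z_N^i(t),X_N^i(t)))$ for $\varphi\in{\cal D}$ and averaging over $i$ gives the semimartingale decomposition
\begin{align*}
\lc\mu^N(t),\varphi(t)\rc = \lc\mu^N(0),\varphi(0)\rc + \int_0^t\lc\mu^N(s),{\cal A}^{s,\theta_N,\lc\mu^N(s),\rho\rc}\varphi(s)\rc ds + M_\varphi^N(t),
\end{align*}
where $M_\varphi^N$ is a martingale with quadratic variation $O(1/N)$. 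From this decomposition together with the moment bounds, an Aldous/Kurtz tightness criterion applied on $\hat{S}$ endowed with the $\mathcal{W}_{E,2}$ metric yields tightness of $(Q_N\circ(\mu^N)^{-1})$, and combined with hypothesis \eqref{eq:QNvartheta0} this gives relative compactness of $(\Qx^N)$ in ${\cal P}_2({\cal P}_2(E)\times{\cal C}_m\times\hat{S})$.

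For stage (ii), along any convergent subsequence $\Qx^{N_k}\Rightarrow\hat{\Px}\circ(\hat\mu_0,\hat\theta,\hat\mu)^{-1}$, I would pass to the limit in the martingale identity above. The martingale piece vanishes in $L^2$ since its bracket is $O(1/N_k)$. The drift term is handled by noting that $\rho$ is Lipschitz, so $\lc\mu^{N_k}(s),\rho\rc\to\lc\hat\mu(s),\rho\rc$ in the $\mathcal{W}_{E,2}$-topology, and the coefficients in ${\cal A}^{s,\theta,\eta}\varphi$ depend Lipschitz-continuously on $(\theta,\eta)$ by \Afg; after a Skorokhod representation and dominated convergence one obtains \eqref{eq:mustar} $\hat{\Px}$-almost surely.

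For stage (iii), conditional on $\hat\theta$, equation \eqref{eq:mustar} is the Fokker--Planck equation associated with the McKean--Vlasov system
\begin{align*}
dX(t)=f(t,\hat\theta(t),Z(t),X(t),\lc\mathrm{Law}(Z(t),X(t)),\rho\rc)dt+\varepsilon\, dW(t),\quad dZ(t)=\phi(\gamma,Z(t))dt+\sigma\, dW(t),
\end{align*}
with prescribed initial law. Given two solutions $\hat\mu,\hat\mu'$ of \eqref{eq:mustar} sharing $\hat\mu_0$, I would invoke a superposition principle to represent each as the marginal flow of a McKean--Vlasov SDE, couple the two through a common Brownian motion and initial condition, and apply the Lipschitz property of $f$ in $(x,\eta)$ together with that of $\rho$ to bound $\mathcal{W}_{E,2}(\hat\mu(t),\hat\mu'(t))^2$ by an integral of itself; Gronwall's inequality then forces equality. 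Running this argument $\hat{\Px}$-almost surely in $\hat\theta$ yields uniqueness in the trajectory sense, so any two subsequential limits of $(\Qx^N)$ coincide, proving convergence of the full sequence. The principal obstacle is precisely this uniqueness step: the randomness of $\hat\theta$ precludes a direct appeal to classical deterministic McKean--Vlasov uniqueness, and care is needed to disintegrate along $\hat\theta$, keep the integrability $\Ex^{\hat{\Px}}[\|\hat\theta\|_{{\cal H}_m^1}^2]<+\infty$ available inside the Gronwall estimate, and simultaneously establish that the representing SDE is well-posed for almost every realization of $\hat\theta$. A secondary technical point is upgrading weak convergence of $\mu^N$ to $\mathcal{W}_{E,2}$-convergence, which is handled by the uniform fourth-moment bound mentioned above.
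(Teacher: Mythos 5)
Your three-stage skeleton (tightness, identification of the limit via the vanishing martingale, uniqueness of the FPK equation) matches the paper's roadmap, and stages (i)--(ii) are essentially the paper's argument: the paper proves relative compactness in ${\cal P}_2(\hat{S})$ through explicit $(2+\epsilon)$-moment bounds and a Wasserstein modulus-of-continuity estimate combined with Ascoli's theorem and Villani's characterization of ${\cal P}_2$-compactness, which is the same content as your Aldous/Kurtz criterion plus higher-moment uniform integrability; the identification step likewise proceeds by Skorokhod representation, the $O(1/N)$ bracket of the martingale part, and the Lipschitz dependence of the generator on $(\theta,\eta)$. Where you genuinely diverge is stage (iii). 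The paper does not use a superposition principle or a McKean--Vlasov coupling: for each fixed realization $\omega$ of $\hat{\theta}$ it treats \eqref{eq:mustar} as a deterministic nonlinear FPK equation and verifies the hypotheses (DH1)--(DH4) of Theorem 4.4 in \cite{Manitaetal2015} (a Lyapunov-function condition with $\Phi(e)=1+|e|^2$, a Wasserstein-Lipschitz estimate on the measure dependence of the drift, etc.), obtaining trajectory uniqueness directly at the PDE level; uniqueness of the weak limit point then follows from the Gluing lemma. Your route---freeze the nonlinearity, represent each solution as the marginal flow of an SDE, couple through a common Brownian motion, and run Gronwall in ${\cal W}_{E,2}$---is a standard and workable alternative, but it requires invoking a superposition theorem for the linearized equation and arguing that the represented weak solution coincides in law with the strong solution of the Lipschitz SDE, which is extra machinery the analytic citation avoids. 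Two smaller remarks: since $\Theta$ is compact, $\hat{\theta}$ is uniformly bounded pathwise, so the disintegration along $\hat{\theta}$ that worries you is just a pointwise-in-$\omega$ application of a deterministic uniqueness statement and no integrability of $\|\hat{\theta}\|_{{\cal H}_m^1}$ enters the Gronwall bound; and note that the state space of the FPK equation is all of $E={\cal O}\times\R^d\times\R^q\times\R^d$ (the type vector and $Y(0)$ are frozen coordinates with zero drift and diffusion), so the representing SDE must carry these static components as well.
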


The roadmap of the proof of Theorem~\ref{thm:limitP} consists of three steps:
\begin{itemize}
\item[(i)] We prove the precompactness of the marginal distributions $({\Qx}_{\mu}^N)_{N=1}^{\infty}$ in ${\cal P}_2(\hat{S})$, where
\begin{align}\label{eq:muQxN}
{\Qx}_{\mu}^N:=Q_N\circ(\mu^N)^{-1}.
\end{align}
It thus follows from~\eqref{eq:QNvartheta0} that the sequence of measures $(\Qx^N)_{N=1}^{\infty}$ is tight.
\item[(ii)] We then prove that, for any weak limit point of a convergent subsequence of $(\Qx^N)_{N=1}^{\infty}$ of the form $\hat{\Px}\circ(\hat{\mu}_0,\hat{\theta},\hat{\mu})^{-1}$, $\hat{\mu}$ is the unique solution of a FPK equation in a random environment with initial condition $\hat{\mu}_0$, $\hat{\Px}$-a.s..
\item[(iii)] Finally, we show that $(\Qx^N)_{N=1}^{\infty}$ admits a unique weak limit point.
\end{itemize}

\noindent We next give a lemma (whose proof is reported in the Appendix), which will be used to verify the relative compactness of $(\Qx_{\mu}^N)_{N=1}^{\infty}$ in ${\cal P}(\hat{S})$ needed to prove step (i).
\begin{lemma}\label{lem:hatSequibounded-cont}
Let {\Afg} and {\Atheta} hold. Let $\epsilon>0$. Then, it holds that
\begin{align}\label{eq:QNM0}
\lim_{M\to\infty}\sup_{N\geq1}\Qx_{\mu}^N\left(\left\{\vartheta\in\hat{S};\ \sup_{t\in[0,T]}\int_E|e|^{2+\epsilon}\vartheta(t,de)> M\right\}\right)=0,
\end{align}
and for any $\varepsilon>0$,
\begin{align}\label{eq:QNM2}
\lim_{\delta\to0}\sup_{N\geq1}\Qx_{\mu}^N\left(\left\{\vartheta\in\hat{S};\ \sup_{|t-s|\leq\delta}{\cal W}_{E,2}(\vartheta(t),\vartheta(s))>\varepsilon\right\}\right)=0.
\end{align}
Above, ${\cal W}_{E,2}$ is the quadratic Wasserstein metric on ${\cal P}_2(E)$.
\end{lemma}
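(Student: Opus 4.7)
My plan is to prove both tightness estimates through direct moment bounds on the interacting particle system \eqref{eq:modelinonstark}, exploiting three structural features: the compactness of $\Theta$ under \Atheta (so $|\theta_N(t)|\le C$ uniformly in $N,t,\omega$), the bounded support $\Xi_K$ of the initial training samples (so $|\zeta_N^i|\le K$ a.s.), and the Lipschitz regularity of $f,\rho,\phi$ from \Afg. Under these conditions, standard Brownian-driven SDE estimates go through uniformly in $i\le N$ and in $N\ge 1$, and then transfer from the particle trajectories to the empirical measure $\mu^N$.

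\textbf{Step 1: Uniform integrability \eqref{eq:QNM0}.} Since
\[
\sup_{t\in[0,T]}\int_E|e|^{2+\epsilon}\mu^N(t,de)\le \frac{C}{N}\sum_{i=1}^N\left\{|\xi_N^i|^{2+\epsilon}+\sup_{t\in[0,T]}|Z_N^i(t)|^{2+\epsilon}+\sup_{t\in[0,T]}|X_N^i(t)|^{2+\epsilon}\right\},
\]
and the initial type vectors $\xi_N^i=(\xi^i,Y_N^i(0))$ are deterministically bounded by \Afg(i) and the compact support condition, it suffices to show
\[
\sup_{N\ge 1}\sup_{1\le i\le N}\Ex^{Q_N}\!\left[\sup_{t\in[0,T]}\!\left(|X_N^i(t)|^{2+\epsilon}+|Z_N^i(t)|^{2+\epsilon}\right)\right]<\infty.
\]
The $Z_N^i$ bound is immediate by Gronwall and BDG applied to the affine SDE for $Z_N^i$. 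For $X_N^i$, I would control the mean-field drift by $\bigl|\frac{1}{N}\sum_j\rho(X_N^j(t))\bigr|\le|\rho(0)|+[\rho]_{\mathrm{Lip}}\cdot\frac{1}{N}\sum_j|X_N^j(t)|$, apply the BDG and Lipschitz estimates of \Afg to bound $\Ex^{Q_N}[\sup_{u\le t}|X_N^i(u)|^{2+\epsilon}]$, then average over $i$ and close by Gronwall on the self-consistent inequality for $\frac{1}{N}\sum_i\Ex^{Q_N}[\sup_{u\le t}|X_N^i(u)|^{2+\epsilon}]$ with initial data bounded by $K$. Markov's inequality then yields \eqref{eq:QNM0}.

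\textbf{Step 2: Equi-continuity \eqref{eq:QNM2}.} Because $\mu^N(t)$ and $\mu^N(s)$ are empirical measures with common particle indices, the diagonal coupling gives
\[
{\cal W}_{E,2}^2(\mu^N(t),\mu^N(s))\le\frac{1}{N}\sum_{i=1}^N\left(|X_N^i(t)-X_N^i(s)|^2+|Z_N^i(t)-Z_N^i(s)|^2\right).
\]
To handle the $\sup_{|t-s|\le\delta}$, I would estimate the increments in $L^{2+\epsilon}$ for some $\epsilon>0$: by Jensen, the drift contribution is bounded by $|t-s|^{1+\epsilon}\int_0^T|f|^{2+\epsilon}du$, whose expectation is uniformly controlled by the moments from Step 1; the Brownian contribution gives $\Ex|\int_s^t\varepsilon^i\,dW_N^i|^{2+\epsilon}\le CK^{2+\epsilon}|t-s|^{1+\epsilon/2}$ by BDG. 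A Garsia-Rodemich-Rumsey (or equivalently Kolmogorov-Chentsov) argument converts these increment bounds into $\Ex^{Q_N}[\sup_{|t-s|\le\delta}|X_N^i(t)-X_N^i(s)|^{2+\epsilon}]\le C\delta^\alpha$ with some $\alpha>0$ uniform in $i,N$, and analogously for $Z_N^i$. Summing over $i$ and applying Markov yield \eqref{eq:QNM2}.

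\textbf{Main obstacle.} The only non-routine point is closing Step 1 in the presence of the genuinely mean-field drift $\frac{1}{N}\sum_j\rho(X_N^j)$: one cannot bound $X_N^i$ in isolation. Fortunately, the linear-in-empirical-mean dependence, together with the symmetry of the moment estimate across particles, allows averaging the per-particle Gronwall inequality over $i$ to yield a closed scalar inequality that is uniform in $N$. Once this averaged moment bound is established, all remaining steps reduce to classical Brownian path estimates.
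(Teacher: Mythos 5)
Your proposal matches the paper's proof in all essentials: \eqref{eq:QNM0} is obtained by Chebyshev's inequality plus uniform-in-$N$ moment bounds on the particle system, closed by exactly the averaged Gronwall argument you describe for the mean-field drift (this is the paper's Lemma~\ref{lem:estimateXN} together with \eqref{eq:estimate-ZN}), and \eqref{eq:QNM2} by the same diagonal coupling bound ${\cal W}_{E,2}^2(\mu^N(t),\mu^N(s))\le\frac1N\sum_{i}(|X_N^i(t)-X_N^i(s)|^2+|Z_N^i(t)-Z_N^i(s)|^2)$ followed by Chebyshev and uniform increment estimates. The only divergence is a detail of the last step: you convert $L^{2+\epsilon}$ increment bounds into a uniform modulus via Garsia--Rodemich--Rumsey/Kolmogorov--Chentsov, whereas the paper bounds $\Ex^{Q_N}[\frac1N\sum_i\sup_{|t-s|\le\delta}|X_N^i(t)-X_N^i(s)|^2]$ directly by $C\delta(1+\Upsilon(\delta))$ with $\Upsilon(\delta)$ the second moment of the Brownian modulus of continuity; both routes are valid, and yours is if anything slightly more careful about that modulus.
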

The following lemma completes step (i).
\begin{lemma}\label{lem:relativecomW2}
Let {\Afg} and {\Atheta} hold. Then, the sequence of marginal distributions $(\Qx_{\mu}^N)_{N=1}^{\infty}$ defined by \eqref{eq:muQxN} is relatively compact in ${\cal P}_2(\hat{S})$.
\end{lemma}

\begin{proof}
We first verify that $(\Qx_{\mu}^N)_{N=1}^{\infty}\subset{\cal P}_2(\hat{S})$. Let {$e=(\varepsilon,y,x)$ and $\hat{e}=(\hat{\varepsilon},\hat{y},\hat{x})\in E$}. We take a measure-valued process $\hat{\vartheta}\in\hat{S}$ satisfying $\sup_{t\in[0,T]}\int_{E}|\hat{e}|^2\hat{\vartheta}(t,d\hat{e})<+\infty$. We endow $\hat{S}$ with the metric
\begin{align}\label{eq:dhatS}
d_{\hat{S}}(\vartheta,\hat{\vartheta}):=\sup_{t\in[0,T]}{\cal W}_{E,2}(\vartheta(t),\hat{\vartheta}(t)),\qquad \vartheta,\hat{\vartheta}\in\hat{S}.
\end{align}
Then, for any $N\geq1$, we have that
\begin{align}\label{eq:esti2ndmomentQN}
&\int_{\tilde{S}}d_{\hat{S}}^2(\vartheta,\hat{\vartheta})\Qx_{\mu}^N(d\vartheta)=\Ex^{Q_N}\left[d_{\hat{S}}^2(\mu^N,\hat{\vartheta})\right]
=\Ex^{Q_N}\left[\sup_{t\in[0,T]}{\cal W}_{E,2}^2(\mu^N(t),\hat{\vartheta}(t))\right]\\
&\quad\leq\Ex^{Q_N}\left[\sup_{t\in[0,T]}\int_{E\times E}\left|e-\hat{e}\right|^2\mu^N(t,de)\hat{\vartheta}(t,d\hat{e})\right]\leq2\Ex^{Q_N}\left[\sup_{t\in[0,T]}\int_{E}\left|e\right|^2\mu^N(t,de)\right]
+2\sup_{t\in[0,T]}\int_{E}\left|\hat{e}\right|^2\hat{\vartheta}(t,d\hat{e}).\nonumber
\end{align}
Since $\hat{\mu}\in\hat{S}$, the 2nd term on the r.h.s. of the inequality \eqref{eq:esti2ndmomentQN} is finite. For the 1st term on the r.h.s. of the inequality \eqref{eq:esti2ndmomentQN}, using \eqref{eq:muQxN}, it follows that
{\begin{align}
&\Ex^{Q_N}\left[\sup_{t\in[0,T]}\int_{E}\left|e\right|^2\mu^N(t,de)\right]
\leq\frac{1}{N}\sum_{i=1}^N\Ex^{Q_N}\left[\sup_{t\in[0,T]}\left|(\xi_N^i,X_N^{i}(t))\right|^2\right]\nonumber\\
&\qquad\qquad\leq\frac{1}{N}\sum_{i=1}^N\Ex^{Q_N}\left[\left|\xi_N^i\right|^2\right]
+\Ex^{Q_N}\left[\sup_{t\in[0,T]}\left|\tilde{X}_N(t)\right|_N^2\right],
\end{align}
where we recall that $\tilde{X}_N(t):=(X_N^{1}(t),\ldots,X_N^{N}(t))$ for $t\in[0,T]$.} Using {\Afg}, Lemma~\ref{lem:estimateXN} and noting that $\zeta^i\in\Xi_K$ for all $i\geq1$, it follows from \eqref{eq:esti2ndmomentQN} that
{\begin{align}\label{eq:uniformdS2}
\sup_{N\geq1}\int_{{\hat{S}}}d_{\hat{S}}^2(\vartheta,\hat{\vartheta})\Qx_{\mu}^N(d\vartheta)&\leq\frac{2}{N}\sum_{i=1}^N\Ex^{Q_N}\left[\left|\xi_N^i\right|^2\right]+2\sup_{N\geq1}\Ex^{Q_N}\left[\sup_{t\in[0,T]}\left|\tilde{X}_N(t)\right|_N^2\right]
\nonumber\\
&\quad+2\sup_{t\in[0,T]}\int_{E}\left|\hat{e}\right|^2\hat{\vartheta}(t,d\hat{e})\nonumber\\
&<+\infty.
\end{align}}
This shows that $\Qx_{\mu}^N\in{\cal P}_2({\hat{S}})$ for all $N\geq1$.

We next prove that $(\Qx_{\mu}^N)_{N=1}^{\infty}\subset{\cal P}_2(\hat{S})$ is relatively compact. By Theorem 7.12 in \cite{Villani2003},  $(\Qx_{\mu}^N)_{N=1}^{\infty}$ is relatively compact in ${\cal P}_2(\hat{S})$ if and only if {\bf(I)} $(\Qx_{\mu}^N)_{N=1}^{\infty}$ is relative compact in ${\cal P}(\hat{S})$; and {\bf(II)} $(\Qx_{\mu}^N)_{N=1}^{\infty}$ satisfies the uniform integrability condition, i.e., for some $\hat{\vartheta}\in\hat{S}$,
\begin{align}\label{eq:UIab}
\lim_{R\to\infty}\sup_{N\geq1}\int_{\{\vartheta\in\hat{S};\ d_{\hat{S}}^2(\vartheta,\hat{\vartheta})\geq R\}}d_{\hat{S}}^2(\vartheta,\hat{\vartheta})\Qx_{\mu}^N(d\vartheta)=0.
\end{align}

\begin{itemize}
  \item The proof of {\bf(I)}: $(\Qx_{\mu}^N)_{N=1}^{\infty}$ is relatively compact in ${\cal P}(\hat{S})$.
\end{itemize}
By Ascoli's theorem, a subset ${\cal C}\subset\hat{S}=C([0,T];{\cal P}_2(E))$ is relatively compact if {\bf(I$_1$)}: for each $t\in[0,T]$, $\{\vartheta(t);\ \vartheta\in{\cal C}\}\subset{\cal P}_2(E)$ is relatively compact; and {\bf(I$_2$)}: ${\cal C}$ is equicontinuous under ${\cal W}_2$. Moreover, using again Theorem 7.12 in \cite{Villani2003}, {\bf(I$_1$)} holds if and only if {\bf(I$_{11}$)} holds: for each $t\in[0,T]$, $\{\vartheta(t);\ \vartheta\in{\cal C}\}$ is relatively compact in ${\cal P}(E)$; and {\bf(I$_{12}$)} holds: the following uniform integrability condition is satisfied:
\begin{align}\label{eq:UIab2}
\lim_{R\to\infty}\sup_{\vartheta\in{\cal C}}\int_{\{e\in E;\ |e|^2\geq R\}}|e|^2\vartheta(t,de)=0.
\end{align}

Let $\epsilon>0$. For $M,\delta,\varepsilon>0$, define the following subsets of $\hat{S}$:
\begin{align}
{\cal C}_1(M) &:=\left\{\vartheta\in\hat{S};\ \sup_{t\in[0,T]}\int_E|e|^{2+\epsilon}\vartheta(t,de)\leq M\right\},\
{\cal C}_2(\delta,\varepsilon):=\left\{\vartheta\in\hat{S};\ \sup_{|t-s|\leq\delta}{\cal W}_{E,2}(\vartheta(t),\vartheta(s))\leq\varepsilon\right\}.
\end{align}
Then, for any $\vartheta\in{\cal C}_1(M)$ and $t\in[0,T]$, it follows that
\begin{align}
\vartheta(t,B_R^c(0))\leq\frac{1}{R^{2+\epsilon}}\sup_{t\in[0,T]}\int_E|e|^{2+\epsilon}\vartheta(t,de)\leq \frac{M}{R^{2+\epsilon}}\to0,\quad R\to\infty,
\end{align}
where $B_R(0):=\{e\in E;\ |e|\leq R\}$ for $R>0$. On the other hand, it is easy to see that
\begin{align}
\lim_{R\to\infty}\sup_{\vartheta\in{\cal C}_1(M)}\int_{\{e\in E;\ |e|^2\geq R\}}|e|^2\vartheta(t,de)\leq\lim_{R\to\infty}\frac{1}{R^{\epsilon/2}}\sup_{\vartheta\in{\cal C}_1(M)}\int_{E}|e|^{2+\epsilon}\vartheta(t,de)\leq \lim_{R\to\infty}\frac{M}{R^{\epsilon/2}}=0.
\end{align}
By {\bf(I$_{11}$)} and {\bf(I$_{12}$)}, this implies that ${\cal C}_1(M)\subset\hat{S}$ satisfies {\bf(I$_{1}$)}. To continue, fix $\varepsilon>0$, by \eqref{eq:QNM0} of Lemma~\ref{lem:hatSequibounded-cont} there exists $N_0=N_0(\varepsilon)\geq1$ and $M_0=M_0(\varepsilon)\geq1$ such that $\sup_{N\geq N_0}\Qx_{\mu}^N({\cal C}_1^c(M_0))\leq\frac{\varepsilon}{2}$. Note that $\lim_{M\to\infty}\Qx_{\mu}^N({\cal C}_1^c(M))=0$ for any $N\geq1$ by \eqref{eq:QNM0}. It follows that there exists $M_1=M_1(\varepsilon)$ large enough such that $\sup_{1\leq N\leq N_0}\Qx_{\mu}^N({\cal C}_1^c(M_1))\leq\frac{\varepsilon}{2N_0}$. Then, let $M:=M_0\vee M_1$, and hence  $\sup_{N\geq1}\Qx_{\mu}^N({\cal C}_1^c(M))\leq\varepsilon$. By applying the limiting result~\eqref{eq:QNM2} of Lemma~\ref{lem:hatSequibounded-cont}, $\lim_{\delta\to0}\sup_{N\geq1}\Qx_{\mu}^N({\cal C}_2(\delta,n^{-1}))=0$ for each $n\geq1$. Then, there exists $\delta_n>0$ satisfying $\lim_{n\to\infty}\delta_n=0$ such that $\sup_{N\geq1}\Qx_{\mu}^N({\cal C}_2^c(\delta_n,n^{-1}))\leq\frac{\varepsilon}{2^n}$. Define ${\cal C}:={\cal C}_1(M)\cap(\bigcap_{n\geq1}{\cal C}_2(\delta_n,n^{-1}))\subset\hat{S}$. Then ${\cal C}$ is relatively compact in $\hat{S}$, and it follows from the above given estimates that $\sup_{N\geq1}\Qx_{\mu}^N({\cal C}^c)\leq2\varepsilon$. This shows that $(\Qx_{\mu}^N)_{N=1}^{\infty}$ is relatively compact in ${\cal P}(\hat{S})$.
\begin{itemize}
  \item The proof of {\bf(II)}: $(\Qx_{\mu}^N)_{N=1}^{\infty}$ satisfies the uniform integrability condition \eqref{eq:UIab}.
\end{itemize}
For any $N\geq1$, it holds that
\begin{align}
&\int_{{\hat{S}}}d_{\hat{S}}^{2+\epsilon}(\vartheta,\hat{\vartheta})\Qx_{\mu}^N(d\vartheta)\leq\Ex^{Q_N}\left[\sup_{t\in[0,T]}\left(\int_{E\times E}\left|e-\hat{e}\right|^2\mu^N(t,de)\hat{\vartheta}(t,d\hat{e})\right)^{\frac{2+\epsilon}{2}}\right]\nonumber\\
&\qquad\leq2^{1+\epsilon}\Ex^{Q_N}\left[\sup_{t\in[0,T]}\left(\int_{E}\left|e\right|^2\mu^N(t,de)\right)^{\frac{2+\epsilon}{2}}\right]
+2^{1+\epsilon}\left(\sup_{t\in[0,T]}\int_{E}\left|\hat{e}\right|^2\hat{\vartheta}(t,d\hat{e})\right)^{\frac{2+\epsilon}{2}}.
\label{eq:d2plusQNint}
\end{align}
It follows from Jensen's inequality that, for some constant $C_{\epsilon}>0$ which only depends on $\epsilon$,
\begin{align}
&\sup_{N\geq1}\Ex^{Q_N}\left[\sup_{t\in[0,T]}\left(\int_{E}\left|e\right|^2\mu^N(t,de)\right)^{\frac{2+\epsilon}{2}}\right]\leq
\sup_{N\geq1}\frac{1}{N}\sum_{i=1}^N\Ex^{Q_N}\left[\sup_{t\in[0,T]}\left|(\xi_N^i,X_N^{i}(t))\right|^{2+\epsilon}\right]\nonumber\\
&\qquad
\leq C_{\epsilon}\left\{\sup_{N\geq1}\frac{1}{N}\sum_{i=1}^N\Ex^{Q_N}\left[\left|\xi_N^i\right|^{2+\epsilon}\right]
+\sup_{N\geq1}\Ex^{Q_N}\left[\sup_{t\in[0,T]}\left|\tilde{X}_N(t)\right|_N^{2+\epsilon}\right]
\right\}.
\end{align}
Observe that $\hat{\vartheta}\in\hat{S}$. Using the assumption {\Afg}, Lemma~\ref{lem:estimateXN} and noting that $\zeta^i\in\Xi_K$ for all $i\geq1$, it follows from \eqref{eq:esti2ndmomentQN} and \eqref{eq:d2plusQNint} that, as $R\to\infty$,
\begin{align}
\sup_{N\geq1}\int_{\{\vartheta\in {\hat{S}};\ d_{\hat{S}}^2(\vartheta,\hat{\vartheta})\geq R\}}d_{\hat{S}}^2(\vartheta,\hat{\vartheta})\Qx_{\mu}^N(d\vartheta)\leq
\frac{1}{R^{{\epsilon}/{2}}}\sup_{N\geq1}\int_{{\hat{S}}}d_{\hat{S}}^{2+\epsilon}(\vartheta,\hat{\vartheta})\Qx_{\mu}^N(d\vartheta)\to0,
\end{align}
i.e., the uniform integrability condition \eqref{eq:UIab} holds. This completes the proof of the lemma.
\end{proof}

The following proposition completes step (ii).
\begin{proposition}\label{prop:limitP1}
Under the assumptions of Theorem \ref{thm:limitP} hold, $(\Qx^N)_{N=1}^{\infty}\subset{\cal P}({\cal P}_2(E)\times{\cal C}_m\times\hat{S})$ is tight. If the law of a ${\cal P}_2(E)\times{\cal C}_m\times\hat{S}$-valued r.v. $(\hat{\mu}_0,\hat{\theta},\hat{\mu})$ defined on some probability space $(\hat{\Omega},\hat{\F},\hat{\Px})$ is the weak limit of a convergent subsequence of $(\Qx^N)_{N=1}^{\infty}$, then, $\hat{\Px}$-a.s. $\hat{\mu}$ is the unique solution of FPK equation \eqref{eq:mustar} with initial condition $\hat{\mu}(0)=\hat{\mu}_0$.
\end{proposition}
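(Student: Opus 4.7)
The plan is to split the argument into three steps: tightness of $(\Qx^N)$ on the product Polish space, identification of any weak limit as a solution of~\eqref{eq:mustar}, and uniqueness of such a solution. The first two steps are largely routine given the preparatory results already in hand; the third carries the main technical difficulty.

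\emph{Tightness.} Because ${\cal P}_2(E)\times{\cal C}_m\times\hat{S}$ is a product of Polish spaces, tightness of the joint laws reduces to tightness of each marginal. The marginal $Q_N\circ(\mu^N(0),\theta_N)^{-1}$ is tight by the convergence hypothesis~\eqref{eq:QNvartheta0}, and the marginal $\Qx_\mu^N$ on $\hat{S}$ is relatively compact (hence tight) by Proposition~\ref{prop:relativecomW2}. Joint tightness then follows from the standard fact that the product of compacts, each carrying mass at least $1-\epsilon/3$ under its marginal, carries mass at least $1-\epsilon$ under the joint.

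\emph{Characterisation.} For $\varphi\in{\cal D}$, It\^o's formula applied to $\varphi(t,(\xi_N^i,Z_N^i(t),X_N^i(t)))$ via~\eqref{eq:modelinonstark} and averaged over $i=1,\ldots,N$ produces the pre-limit identity
\begin{align*}
\lc\mu^N(t),\varphi(t)\rc-\lc\mu^N(0),\varphi(0)\rc = \int_0^t\big\lc\mu^N(s),{\cal A}^{s,\theta_N,\lc\mu^N(s),\rho\rc}\varphi(s)\big\rc ds + M^N_t,
\end{align*}
where $M^N_t$ is a martingale formed from stochastic integrals against the independent $W_N^i$. The uniform bound on the type vector from~\Afg(i) yields $\Ex^{Q_N}[(M^N_t)^2]=O(1/N)$. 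Along a subsequence supplied by Step~1, with limit $\hat{\Px}\circ(\hat{\mu}_0,\hat{\theta},\hat{\mu})^{-1}$, invoke the Skorokhod representation theorem to realise the convergence almost surely on a common probability space. Continuity of $\vartheta\mapsto\lc\vartheta,\varphi\rc$ on ${\cal P}_2(E)$, joint continuity of $(\theta,\vartheta)\mapsto\lc\vartheta,{\cal A}^{s,\theta,\lc\vartheta,\rho\rc}\varphi(s)\rc$ under the Lipschitz assumption~\Afg, and the uniform integrability supplied by the second-moment bounds underlying Proposition~\ref{prop:relativecomW2}, together allow passage to the limit in each term and ensure $M^N_t\to 0$ in $L^2$. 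The joint convergence also fixes the initial datum $\hat{\mu}(0)=\hat{\mu}_0$, yielding~\eqref{eq:mustar}.

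\emph{Uniqueness.} This is the crux. The plan is to realise~\eqref{eq:mustar} as the time-marginal flow of a McKean--Vlasov SDE in the random environment $\hat{\theta}$. Conditioning on $(\hat{\mu}_0,\hat{\theta})$, draw $(\xi^*,Y^*(0),Z^*(0),X^*(0))\sim\hat{\mu}_0$ independently of a fresh Brownian motion $B$ (and of $\hat{\theta}$), and solve the system obtained from~\eqref{eq:modeli} by substituting $\theta(t)$ with $\hat{\theta}(t)$ and the empirical interaction $\frac{1}{N}\sum_j\rho(X^j(t))$ by $\lc\mu(t),\rho\rc$, where $\mu(t)$ denotes the conditional law of $(\xi^*,Y^*(0),Z^*(t),X^*(t))$ given $(\hat{\mu}_0,\hat{\theta})$. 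The Lipschitz properties in~\Afg\ guarantee strong existence and pathwise uniqueness of this McKean--Vlasov SDE pathwise in $\hat{\theta}$, and its time-marginal flow solves~\eqref{eq:mustar}. For two candidate solutions $\hat{\mu}^1,\hat{\mu}^2$ sharing $(\hat{\mu}_0,\hat{\theta})$, couple the associated SDEs through the same $B$ and identical initial datum; the Lipschitz bounds on $f,\phi,\rho$ then deliver a Gronwall estimate
\begin{align*}
\hat{\Ex}\Big[\sup_{s\in[0,t]}{\cal W}_{E,2}^2(\hat{\mu}^1(s),\hat{\mu}^2(s))\Big]\leq C\int_0^t\hat{\Ex}\Big[\sup_{u\in[0,s]}{\cal W}_{E,2}^2(\hat{\mu}^1(u),\hat{\mu}^2(u))\Big]ds,
\end{align*}
which forces $\hat{\mu}^1=\hat{\mu}^2$ in $\hat{S}$, $\hat{\Px}$-a.s. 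The delicate point is that $\hat{\theta}$ is random and enters the coefficients in a Lipschitz rather than bounded fashion, so the coupling must be carried out pathwise in $\hat{\theta}$; the $L^2$-integrability of $\|\hat{\theta}\|_{{\cal H}_m^1}$, inherited from $\theta_N\in{\cal U}^{Q_N,\Fx}$ via the uniform bound~\eqref{eq:unformesttheta}, is needed to absorb the pathwise constants into the Gronwall estimate.
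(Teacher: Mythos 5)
Your tightness and characterisation steps coincide with the paper's argument: tightness is obtained exactly as you say from \eqref{eq:QNvartheta0} together with Proposition~\ref{prop:relativecomW2}, and the identification of the limit proceeds, as in the paper, via the It\^o identity with an $O(1/N)$ martingale remainder, a Skorokhod representation, and passage to the limit using the Lipschitz structure of ${\cal A}^{s,\theta,\eta}$ and the uniform moment bounds. Those parts are fine.

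The uniqueness step, however, has a genuine gap. Your coupling-and-Gronwall argument shows that the McKean--Vlasov SDE obtained by freezing the environment $\hat{\theta}$ admits a unique marginal flow, and hence that there is at most one solution of \eqref{eq:mustar} \emph{within the class of time-marginal flows of that SDE}. But the proposition asserts uniqueness among \emph{all} distributional solutions $\hat{\mu}\in\hat{S}$ of \eqref{eq:mustar}, and the limit point produced by the tightness argument is a priori only such a distributional solution --- nothing identifies it as the marginal flow of an SDE. To run your comparison you would first need a superposition step: given an arbitrary solution $\mu$ of \eqref{eq:mustar}, freeze the nonlinearity $h(s):=\lc\mu(s),\rho\rc$, observe that $\mu$ then solves the resulting \emph{linear} FPK equation, and invoke uniqueness (or a representation theorem) for that linear equation with unbounded Lipschitz coefficients to conclude that $\mu$ is the marginal law of the corresponding SDE; only then can two candidates be coupled through the same Brownian motion. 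That linear step is itself nontrivial and is nowhere supplied in your proposal. The paper sidesteps the issue entirely by arguing analytically: for each fixed $\omega$ in a full-measure set it verifies the Lyapunov-type hypotheses (DH1)--(DH4) of Theorem 4.4 in Manita, Romanov and Shaposhnikov (2015) with $\Phi(e)=1+|e|^2$, which yields trajectory uniqueness for the nonlinear FPK equation directly, with no probabilistic representation of the competing solution. Either route can be made to work, but as written your uniqueness argument does not close.
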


\begin{proof} The tightness of $(\Qx^N)_{N=1}^{\infty}$ follows from the condition \eqref{eq:QNvartheta0} and Lemma~\ref{lem:relativecomW2}.
Since $\hat{\Px}\circ(\hat{\mu}_0,\hat{\theta},\hat{\mu})^{-1}$ is the weak limit of a convergent subsequence of $(\Qx^N)_{N=1}^{\infty}$, Skorokhod representation theorem implies the existence of a probability space $(\Omega^*,\F^*,\Px^*)$, a sequence of ${\cal P}_2(E)\times{\cal C}_m\times\hat{S}$-valued r.v.s $(\mu_0^{N,*},\theta_N^{*},\mu^{N,*})$ and $(\mu^{*}_0,\theta^{*},\mu^{*})$ satisfying $\Px^*\circ(\mu_0^{N,*},\theta_N^{*},\mu^{N,*})^{-1}= \Qx^N$, $\Px^*\circ(\mu^{*}_0,\theta^{*},\mu^{*})^{-1}=\hat{\Px}\circ(\hat{\mu}(0),\hat{\theta},\hat{\mu})^{-1}$, and $\Px^*$-a.s., as $N\to\infty$,
\begin{align}\label{eq:asNinfty}
\mu_0^{N,*}\Rightarrow\mu_0^{*}\ {\rm in}\ {\cal P}_2(E);\quad \theta_N^{*}\to\theta^*\ {\rm in}\ {\cal C}_m;\quad \mu^{N,*}\to\mu^*,\ {\rm in}\ (\hat{S},d_{\hat{S}}).
\end{align}
Moreover, we also have that, for all $p\geq1$,
\begin{align}\label{eq:Estarcon}
&\lim_{N\to\infty}\Ex^{*}\left[\sup_{t\in[0,T]}\left|\lc\mu^{N,*}(t),\varphi(t)\rc-\lc\mu^{N,*}(0),\varphi(0)\rc-\int_0^t\lc\mu^{N,*}(s),{{\cal A}^{\theta_N^{*},\lc\mu^{N,*}(s),\rho\rc}}\varphi(s)\rc ds\right|^{2p}\right]\\
&=\lim_{N\to\infty}\Ex^{Q_N}\left[\sup_{t\in[0,T]}\left|\lc\mu^N(t),\varphi(t)\rc-\lc\mu^N(0),\varphi(0)\rc-\int_0^t\lc\mu^N(s),{{\cal A}^{\theta_N,\lc\mu^N(s),\rho\rc}}\varphi(s)\rc ds\right|^{2p}\right]=0.\nonumber
\end{align}
The first equality in \eqref{eq:Estarcon} follows from the fact that $\Px^*\circ(\mu_0^{N,*},\theta_N^{*},\mu^{N,*})^{-1}= \Qx^N$ with $\Qx^N:=Q_N\circ(\mu^N(0),\theta_N,\mu^N)^{-1}$. As for the second equality in \eqref{eq:Estarcon}, observe that an application of It\^o's formula yields ${\cal M}^N(t)=\lc\mu^N(t),\varphi(t)\rc-\lc\mu^N(0),\varphi(0)\rc-\int_0^t\lc\mu^N(s),{{\cal A}^{\theta_N,\lc\mu^N(s),\rho\rc}}\varphi(s)\rc ds$  for any $t\in[0,T]$, where ${\cal M}^N(t):=\frac{1}{N}\sum_{i=1}^N\int_0^t\nabla_x\varphi(s,\xi^i,Z_N^i(s),X_N^i(s))^{\top}\varepsilon^idW_N^i(s)$. Observe that the test function $\varphi\in{\cal D}$, and $(W_N^1,\ldots,W_N^N)$ are independent Wiener processes under the probability measure $Q_N$. Then, from the BDG inequality, there exists a constant $C>0$, independent of $N$, such that
\begin{align*}
\varlimsup_{N\to\infty}\Ex^{Q_N}\left[\sup_{t\in[0,T]}\left|{\cal M}^N(t)\right|^{2p}\right]\leq\lim_{N\to\infty}\frac{C}{N}=0.
\end{align*}
This shows that the second equality in \eqref{eq:Estarcon} must hold.

We next claim that for any $t\in[0,T]$ and test function $\varphi\in{\cal D}$, $\Px^*$-a.s.
\begin{align}\label{eq:operatorconvergece0}
\lim_{N\to\infty}\Upsilon_{t,\varphi}(\mu^{N,*},\theta_N^*,\lc\mu^{N,*},\rho\rc)=\Upsilon_{t,\varphi}(\mu^{*},\theta^*,\lc\mu^{*},\rho\rc),
\end{align}
where the mapping $\Upsilon_{t,\varphi}:\hat{S}\times{\cal C}_m\times{\cal C}_1\to\R$ is defined as:
\begin{align}\label{eq:Gammamaping}
\Upsilon_{t,\varphi}(\mu,\theta,h):=\int_0^{t} \lc\mu(s),{{\cal A}^{\theta,h(s)}}\varphi(s)\rc ds.
\end{align}
Here, the definition of {${\cal A}^{\theta,h(s)}$} is given in \eqref{eq:Atthetaeta}. Then, for any $(\theta_i,h_i)\in{\cal C}_m\times{\cal C}_1$ with $i=1,2$ and $\mu\in\hat{S}$, it follows from {\Afg} that there exists a constant $C_{\varphi}>0$ such that
\begin{align}\label{eq:LipGamma}
&\left|\Upsilon_{t,\varphi}(\mu,\theta_1,h_1)-\Upsilon_{t,\varphi}(\mu,\theta_2,h_2)\right|
\leq C_{\varphi}\left[\left\|\theta_1-\theta_2\right\|_T+\left\|h_1-h_2\right\|_T\right].
\end{align}
Moreover, by {\Afg}, $|{{\cal A}^{\theta,h(s)}}\varphi(s,e)|\leq C_{\varphi}[1+\|\theta\|_T+\|h\|_T+|e|^2]$. Hence, for any $s\in[0,T]$, by \eqref{eq:asNinfty} and Theorem~7.12 of \cite{Villani2003}, we arrive at the conclusion that, $\Px^*$-a.s., $\lc\mu^{N,*}(s),{{\cal A}^{\theta,h(s)}}\varphi(s)\rc\to\lc\mu^{*}(s),{{\cal A}^{\theta,h(s)}}\varphi(s)\rc$ as $N\to\infty$. Further, for $q(e):=|e|^2$, it holds that, $\Px^*$-a.s.
\begin{align}
\sup_{N\geq1}\left|\left\lc\mu^{N,*}(s),{{\cal A}^{\theta,h(s)}}\varphi(s)\right\rc\right|&\leq C_{\varphi,h}+C_{\varphi,h}\sup_{N\geq1}\left\lc\mu^{N,*}(s),q\right\rc=C_{\varphi,h}+C_{\varphi,h}\sup_{N\geq1}{\cal W}_{{E},2}(\mu^{N,*}(s),\delta_0)\nonumber\\
&\leq C_{\varphi,h} +C_{\varphi,h}{\cal W}_{{ E},2}(\mu^{*}(s),\delta_0)+C_{\varphi,h}\sup_{N\geq1}d_{\hat{S}}(\mu^{N,*},\mu^{*}),
\end{align}
for some positive constant $C_{\varphi,h}$ which is independent of $N$. Using the limiting results given in~\eqref{eq:asNinfty}, we then obtain that $d_{\hat{S}}(\mu^{N,*},\mu^{*})\to0$ as $N\to\infty$, $\Px^*$-a.s. Note that $\mu^*\in\hat{S}$. Then, it follows from the dominated convergence theorem that, for $t\in[0,T]$,
\begin{align}\label{eq:dct00}
\lim_{N\to\infty}\int_0^t\lc\mu^{N,*}(s),{{\cal A}^{\theta,h(s)}}\varphi(s)\rc ds=\int_0^t\lc\mu^{*}(s),{{\cal A}^{\theta,h(s)}}\varphi(s)\rc ds,\quad \Px^*\mbox{-a.s.}
\end{align}
Using \eqref{eq:LipGamma}, \eqref{eq:asNinfty} and  \eqref{eq:dct00}, we deduce that, $\Px^*$-a.s.
\begin{align}
&\left|\Upsilon_{t,\varphi}(\mu^{N,*},\theta_N^*,\lc\mu^{N,*},\rho\rc)-\Upsilon_{t,\varphi}(\mu^{*},\theta^*,\lc\mu^{*},\rho\rc)\right|
\leq\left|\Upsilon_{t,\varphi}(\mu^{N,*},\theta_N^*,\lc\mu^{N,*},\rho\rc)-\Upsilon_{t,\varphi}(\mu^{N,*},\theta^*,\lc\mu^{*},\rho\rc)\right|\nonumber\\
&\quad+\left|\Upsilon_{t,\varphi}(\mu^{N,*},\theta^*,\lc\mu^{*},\rho\rc)-\Upsilon_{t,\varphi}(\mu^{*},\theta^*,\lc\mu^{*},\rho\rc)\right|\nonumber\\
&\quad\leq C_{\varphi}\{\|\theta_N^*-\theta^*\|_T + \|\lc\mu^{N,*},\rho\rc-\lc\mu^{*},\rho\rc\|_T\}+\left|\Upsilon_{t,\varphi}(\mu^{N,*},\theta^*,\lc\mu^{*},\rho\rc)-\Upsilon_{t,\varphi}(\mu^{*},\theta^*,\lc\mu^{*},\rho\rc)\right|\nonumber\\
&\quad\to0,\quad N\to\infty.
\end{align}
This proves the limit in~\eqref{eq:operatorconvergece0}. By applying Fatou's lemma, \eqref{eq:Estarcon} and \eqref{eq:operatorconvergece0}, we obtain that
\begin{align}\label{eq:Exstar0}
&\quad\hat{\Ex}\left[\sup_{t\in[0,T]}\left|\lc\hat{\mu}(t),\varphi(t)\rc-\lc\hat{\mu}(0),\varphi(0)\rc-\int_0^t\lc\hat{\mu}(s),{{\cal A}^{\hat{\theta},\lc\hat{\mu}(s),\rho\rc}}\varphi(s)\rc ds\right|^{2p}\right]\notag\\
&=\Ex^{*}\left[\sup_{t\in[0,T]}\left|\lc\mu^{*}(t),\varphi(t)\rc-\lc\mu^{*}(0),\varphi(0)\rc-\int_0^t\lc\mu^{*}(s),{{\cal A}^{\theta^{*},\lc\mu^{*}(s),\rho\rc}}\varphi(s)\rc ds\right|^{2p}\right]\\
& \leq\liminf_{N\to\infty}\Ex^{*}\left[\sup_{t\in[0,T]}\left|\lc\mu^{N,*}(t),\varphi(t)\rc-\lc\mu^{N,*}(0),\varphi(0)\rc-\int_0^t\lc\mu^{N,*}(s),{{\cal A}^{\theta_N^{*},\lc\mu^{N,*}(s),\rho\rc}}\varphi(s)\rc ds\right|^{2p}\right]=0.\nonumber
\end{align}
This proves~\eqref{eq:mustar} for all $\omega\in\hat{\Omega}_0$ with some $\hat{\Omega}_0\subset\hat{\Omega}$ satisfying $\hat{\Px}(\hat{\Omega}_0)=1$.

We next prove the uniqueness of a solution to the FPK equation \eqref{eq:mustar} in the trajectory sense. This is done by verifying the conditions (DH1)-(DH4) imposed in Theorem 4.4 of \cite{Manitaetal2015}. For fixed $\omega\in\hat{\Omega}_0$, and for $(t,e,\mu)\in[0,T]\times E\times{\cal P}_2(E)$,  define
\begin{align}
\begin{cases}
\displaystyle A(e):=\frac{1}{2}\left(
                           \begin{array}{c}
                             0\\
                             \varepsilon \\
                           \end{array}
                         \right)(0,\varepsilon^{\top}),\quad b_{\omega}(t,e,\mu):=\left(
                                                                                                  \begin{array}{c}
                                                                                                    0 \\
                                                                                                    f(t,\theta^*(t,\omega),x,\lc\mu,\rho\rc) \\
                                                                                                  \end{array}
                                                                                                \right);\label{eq:ABomega}\\[1.2em]
\displaystyle L_{\omega}(\mu):={\rm tr}[A(e)\nabla_{ee}^2] + b_{\omega}(t,e,\mu)^{\top}\nabla_e.
\end{cases}
\end{align}
It then follows from~\eqref{eq:ABomega} that $\sqrt{A(e)}$ is twice differentiable in $e$, and hence the assumption (DH1) in Theorem 4.4 of \cite{Manitaetal2015} is satisfied. Choose the convex function $\Phi\in C^2(E)$ as $\Phi(e)=1+|e|^2$ for $e\in E$. For any {$\hat{e}=(\hat{\varepsilon},\hat{y},\hat{x})\in E$}, it follows from {\Afg} that there exists a constant $C_{f,\phi}>0$ (which may vary from line to line) such that
\begin{align}
&(b_{\omega}(t,e+\hat{e},\mu)-b_{\omega}(t,e,\mu))^{\top}\hat{e}=\left(f(t,\theta^*(t,\omega),x+\hat{x},\lc\mu,\rho\rc)-f(t,\theta^*(t,\omega),x,\lc\mu,\rho\rc)\right)^{\top}\hat{x}\nonumber\\
&\qquad\leq C_{f,\phi}|\hat{x}|^2\leq C_{f,\phi}\Phi(\hat{e}),
\end{align}
and there exists a constant $C_{f,\phi,\mu,\omega}>0$ such that
\begin{align}
L_{\omega}(\mu)\Phi(e)={\rm tr}[A(e)\nabla_{ee}^2\Phi(e)] + b_{\omega}(t,e,\mu)^{\top}\nabla_e\Phi(e)\leq C_{f,\phi,\mu,\omega}\Phi(e).
\end{align}
It also follows from {\Afg}  that
\begin{align}
&\frac{|f(t,\theta^*(t,\omega),x,\lc\mu,\rho\rc)|^2}{\Phi(e)}+\frac{|\varepsilon|^4}{\Phi^2(e)}\leq \frac{1+\|\theta^*(\omega)\|_T^2+|e|^2+|\lc\mu,\rho\rc|^2}{\Phi(e)}+\frac{|e|^4}{\Phi^2(e)}\leq 1+\|\theta^*(\omega)\|_T^2++|\lc\mu,\rho\rc|^2.
\end{align}
This complete the verification of the condition (DH2). For any $\mu,\nu\in{\cal P}_2(E)$, it follows from \eqref{eq:ABomega} and {\Afg} that
\begin{align}\label{eq:bomwssdf}
&\left|b_{\omega}(t,e,\mu)-b_{\omega}(t,e,\nu)\right|=\left|f(t,\theta^*(t,\omega),x,\lc\mu,\rho\rc)-f(t,\theta^*(t,\omega),x,\lc\nu,\rho\rc)\right|\\
&\quad\leq [f]_{\rm Lip}|\lc\mu,\rho\rc-\lc\nu,\rho\rc|=[f]_{\rm Lip}[\rho]_{\rm Lip}\left|\left\lc\mu-\nu,\frac{\rho}{[\rho]_{\rm Lip}}\right\rc\right|\leq[f]_{\rm Lip}[\rho]_{\rm Lip}{\cal W}_{E,2}(\mu,\nu)=:G({\cal W}_{E,2}(\mu,\nu)),\nonumber
\end{align}
where $G(y):=[f]_{\rm Lip}[\rho]_{\rm Lip}y$ for $y\in[0,\infty)$. Obviously, the function $G$ is continuous and increasing on $[0,\infty)$ with $G(0)=0$. This verifies the condition (DH3). Next, we verify the condition (DH4). Take $\Psi(e)=\sqrt{\Phi(e)}$ for $e\in E$. Hence,  $\Psi\in C^2(E)$, $\Psi\geq1$, and $|\nabla_e\Psi(e)|+|\nabla_{ee}^2\Psi(e)|\leq C$ for some constant $C>0$. Moreover, it holds that $\lim_{|e|\to\infty}\Psi(e)=+\infty$. We deduce from \eqref{eq:ABomega} that, for any $\mu\in{\cal P}_2(E)$,
\begin{align}
\frac{\left|A(e)\nabla_e\Psi(e)\right|\sqrt{\Phi(e)}}{\Psi(e)}+\frac{\left|\sqrt{A(e)}\nabla_e\Psi(e)\right|^2}{\Psi^2(e)}+\frac{|L_{\omega}(\mu)\Psi(e)|}{\Psi(e)}
\leq C_{f,\phi,\mu,\omega}\Phi(e).
\end{align}
For fixed $\omega\in\hat{\Omega}_0$, the uniqueness of a solution to \eqref{eq:mustar} follows from Theorem 4.4 in \cite{Manitaetal2015}.
\end{proof}

The next lemma concludes the step (iii) outlined in the proof roadmap. Its proof follows directly from the Gluing lemma (see, e.g. Lemma 7.6 in \cite{Villani2003}) and is omitted here.
\begin{lemma}\label{lem:unilimit}
Let assumptions of Theorem \ref{thm:limitP} hold. Then, the precompact sequence $(\Qx^N)_{N=1}^{\infty}$ has a unique weak limit point.
\end{lemma}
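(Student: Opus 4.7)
The plan is to exploit Proposition~\ref{prop:limitP1} together with the hypothesis \eqref{eq:QNvartheta0} to argue that every weak limit point of $(\Qx^N)_{N=1}^{\infty}$ is in fact the same probability measure, fully determined by $\vartheta_0$ and the FPK well-posedness established in Proposition~\ref{prop:limitP1}. Precompactness already supplies the existence of weak limit points; the task is to force uniqueness.

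First, I would let $\hat{\Qx}_i = \hat{\Px}_i\circ(\hat{\mu}_0^i,\hat{\theta}^i,\hat{\mu}^i)^{-1}$, $i=1,2$, be any two weak limit points along subsequences of $(\Qx^N)$. The continuous mapping theorem applied to the projection $({\cal P}_2(E)\times{\cal C}_m\times\hat{S})\ni(m,\theta,\mu)\mapsto(m,\theta)$, together with \eqref{eq:QNvartheta0}, forces the marginal law of $(\hat{\mu}_0^i,\hat{\theta}^i)$ under $\hat{\Px}_i$ to coincide with $\vartheta_0$ for both $i=1,2$. By Proposition~\ref{prop:limitP1}, each $\hat{\mu}^i$ almost surely solves the FPK equation \eqref{eq:mustar} with initial condition $\hat{\mu}_0^i$ and control $\hat{\theta}^i$, and the solution is unique in the trajectory sense.

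Next I would construct a Borel measurable solution map $\Psi:{\cal P}_2(E)\times{\cal C}_m\to\hat{S}$ sending $(m_0,\theta)$ to the unique trajectory solution of \eqref{eq:mustar}. Existence on the image of $\vartheta_0$ follows from the existence part of Proposition~\ref{prop:limitP1}; uniqueness from the same proposition. Measurability can be obtained as follows: the set
\[
G:=\bigcap_{\varphi\in{\cal D}_0}\bigcap_{t\in[0,T]\cap\mathbb{Q}}\Bigl\{(m_0,\theta,\mu):\lc\mu(t),\varphi(t)\rc-\lc m_0,\varphi(0)\rc-\int_0^t\lc\mu(s),{\cal A}^{s,\theta,\lc\mu(s),\rho\rc}\varphi(s)\rc\,ds=0\Bigr\}
\]
is Borel in ${\cal P}_2(E)\times{\cal C}_m\times\hat{S}$ for a countable dense subset ${\cal D}_0\subset{\cal D}$, thanks to the continuity of the pairings against fixed test functions (and of ${\cal A}^{\cdot}$ in its arguments established in the proof of Proposition~\ref{prop:limitP1}). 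By the uniqueness in Proposition~\ref{prop:limitP1}, $G$ is the graph of a function $\Psi$, and since projections of Borel graphs of single-valued Borel relations on Polish spaces are Borel (Lusin--Souslin), $\Psi$ is Borel measurable. Consequently, $\hat{\mu}^i=\Psi(\hat{\mu}_0^i,\hat{\theta}^i)$ $\hat{\Px}_i$-a.s.\ for $i=1,2$.

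Finally I would conclude: the map $\Gamma(m_0,\theta):=(m_0,\theta,\Psi(m_0,\theta))$ is Borel measurable, and
\[
\hat{\Qx}_i=\vartheta_0\circ\Gamma^{-1},\qquad i=1,2,
\]
so $\hat{\Qx}_1=\hat{\Qx}_2$. Combined with the precompactness given by Proposition~\ref{prop:relativecomW2} and \eqref{eq:QNvartheta0}, this shows that the whole sequence $(\Qx^N)_{N=1}^{\infty}$ converges to this unique limit point, which completes the proof of Theorem~\ref{thm:limitP}. The most delicate step I anticipate is justifying the Borel measurability of $\Psi$: one must ensure the defining countable family of functionals is jointly Borel in $(m_0,\theta,\mu)$, for which one invokes the continuity of $\mu\mapsto\lc\mu,\rho\rc$ on ${\cal P}_2(E)$ (Lipschitz property of $\rho$ from \Afg) and the Lipschitz dependence of ${\cal A}^{s,\theta,h}\varphi$ on $(\theta,h)$ already exploited in \eqref{eq:LipGamma}.
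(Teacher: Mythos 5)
Your argument is correct in substance but follows a genuinely different route from the paper. The paper's proof takes two convergent subsequences with limits $\Px^i\circ(\mu_0^{*,i},\theta_i^*,\mu^{*,i})^{-1}$, observes that \eqref{eq:QNvartheta0} forces $\Px^1\circ(\mu_0^{*,1},\theta_1^*)^{-1}=\Px^2\circ(\mu_0^{*,2},\theta_2^*)^{-1}$, and then invokes the Gluing lemma (Lemma 7.6 in \cite{Villani2003}) to realize both limits on one probability space as $(\bar{\mu}_0,\bar{\theta},\bar{\mu}^1)$ and $(\bar{\mu}_0,\bar{\theta},\bar{\mu}^2)$ with a \emph{common} initial datum and control; trajectory uniqueness from Proposition~\ref{prop:limitP1} then gives $\bar{\mu}^1=\bar{\mu}^2$ a.s.\ and hence equality of the joint laws. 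This coupling device completely sidesteps the need for a measurable solution map. Your approach instead builds the Borel map $\Psi$ and identifies every limit point explicitly as $\vartheta_0\circ\Gamma^{-1}$, which is more informative (it names the limit) but puts the entire burden on the measurability of $\Psi$. Your Lusin--Souslin argument for that is sound in outline, but one step deserves more care than you give it: you must check that membership in $G$ (tested only against a countable family ${\cal D}_0$ and rational $t$) actually characterizes solutions of \eqref{eq:mustar} for \emph{all} $\varphi\in{\cal D}$ and all $t$. Rational $t$ suffices by continuity of $t\mapsto\lc\mu(t),\varphi(t)\rc$ on $\hat{S}$, but passing from ${\cal D}_0$ to ${\cal D}$ requires density in a topology controlling $\varphi$ together with its first and second derivatives against the quadratic weight appearing in the bound $|{\cal A}^{s,\theta,h}\varphi(s,e)|\leq C_\varphi[1+\|\theta\|_T+\|h\|_T+|e|^2]$; this is standard but should be stated. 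With that point supplied, both proofs rest on the same two pillars --- the matching of the $(\mu_0,\theta)$-marginals via \eqref{eq:QNvartheta0} and the trajectory uniqueness of the FPK equation --- and yours is a valid, somewhat heavier, alternative.
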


We now have all the ingredients to prove the main result (Theorem~\ref{thm:limitP}) of this section.
\begin{proof}[{\bf Proof of Theorem~\ref{thm:limitP}}]
It follows from Proposition \ref{prop:limitP1} and Lemma \ref{lem:unilimit} that $(\Qx^N)_{N=1}^{\infty}\subset{\cal P}({\cal P}_2(E)\times{\cal C}_m\times\hat{S})$ is convergent under the weak topology. Let us endow $O:={\cal P}_2(E)\times{\cal C}_m\times\hat{S}$ with the following metric: for $o_i=(\vartheta_{0i},w_i,\vartheta_i)\in O$ with $i=1,2$,
\begin{align}\label{eq:dO}
d_{O}(o_1,o_2):={\cal W}_{E,2}(\vartheta_{01},\vartheta_{02})+\|w_1-w_2\|_T + d_{\hat{S}}(\vartheta_1,\vartheta_2).
\end{align}
Then, using assumptions {\Afg} and {\Atheta}, the fact that $\zeta_N=(\zeta_N^i)_{i=1}^{\infty}\in\Xi_K^{\N}$, and Lemma~\ref{lem:estimateXN}, for $\hat{o}=(\delta_{0},0,\delta_0)\in O$ and $\epsilon>0$, it follows that
\begin{align}
&\sup_{N\geq1}\int_{\{o\in O;\ d_{O}(o,\hat{o})\geq R\}}d_{O}^{2+\epsilon}(o,\hat{o})\Qx^N(do)\leq C\sup_{N\geq1}\Ex^{Q_N}\left[\left|{\cal W}_{E,2}^2(\mu^N(0),\delta_{0})+\|\theta_N\|_T^2
+d_{\hat{S}}^2(\mu^N,\delta_0)\right|^{\frac{2+\epsilon}{2}}\right]\nonumber\\
&\qquad\qquad\leq C \sup_{N\geq1}\Ex^{Q_N}\left[\left|\frac{1}{N}\sum_{i=1}^N(|\varepsilon^i|^2+|\zeta_N^i|^2)+\|\theta_N\|_T^2
+\frac{1}{N}\sum_{i=1}^N\|X_N^i\|_T^2\right|^{\frac{2+\epsilon}{2}}\right]\leq C_{K,\epsilon,T}<+\infty,
\end{align}
where $C$ and $C_{K,\epsilon,T}$ are some positive constants independent of $N$. This implies that
\begin{align}\label{eq:uniforminteoo}
\lim_{R\to\infty}\sup_{N\geq1}\int_{\{\vartheta\in O;\ d_{O}(o,\hat{o})\geq R\}}d_O^2(o,\hat{o})\Qx^N(do)=0.
\end{align}
Then, the convergence of $(\Qx^N)_{N=1}^{\infty}$ in ${\cal P}_2(O)$ follows from Theorem 7.12 in \cite{Villani2003} along with the uniform integrability result given in~\eqref{eq:uniforminteoo}.
\end{proof}

Lastly, we show the uniqueness of the weak limit point of the marginal distributions $(\Qx_\mu^N)_{N=1}^{\infty}$ defined by \eqref{eq:muQxN}.
Lemma~\ref{lem:relativecomW2} shows that $(\Qx_{\mu}^N)_{N=1}^{\infty}\subset{\cal P}_2(\hat{S})$ is precompact. The following corollary is an immediate consequence of Theorem \ref{thm:limitP}.
\begin{corollary}\label{coro:uniquemuN}
Let assumptions of Theorem \ref{thm:limitP} hold. Then, the precompact sequence $(\Qx_{\mu}^N)_{N=1}^{\infty}\subset{\cal P}_2(\hat{S})$ has a unique limit $\Qx_{\mu}^*\in{\cal P}_2(\hat{S})$ satisfying ${\cal W}_{\hat{S},2}(\Qx_{\mu}^N,\Qx_{\mu}^*)\to0$ as $N\to\infty$.
\end{corollary}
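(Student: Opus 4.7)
The plan is to deduce the corollary as a direct projection of the stronger joint convergence already proved in Theorem~\ref{thm:limitP}. Since the hypotheses of Theorem~\ref{thm:limitP} are in force, we know that the full joint sequence $(\Qx^N)_{N=1}^{\infty}\subset{\cal P}_2(O)$, with $O:={\cal P}_2(E)\times{\cal C}_m\times\hat{S}$, converges in the quadratic Wasserstein sense on $O$ to a unique limit $\Qx^*\in{\cal P}_2(O)$. The claim then reduces to transferring this quadratic Wasserstein convergence to the marginal law on the third factor $\hat{S}$.

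First, I would introduce the coordinate projection $\pi:O\to\hat{S}$ defined by $\pi(\vartheta_0,w,\vartheta):=\vartheta$ and observe from the metric \eqref{eq:dO} that, for any $o_1,o_2\in O$, one has $d_{\hat{S}}(\pi(o_1),\pi(o_2))\leq d_O(o_1,o_2)$, so that $\pi$ is 1-Lipschitz. By definition of push-forward, $\pi_{\ast}\Qx^N=Q_N\circ(\mu^N)^{-1}=\Qx_{\mu}^N$ for every $N\geq1$. I then set $\Qx_{\mu}^*:=\pi_{\ast}\Qx^*$, which is the natural candidate limit.

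Second, I would show ${\cal W}_{\hat{S},2}(\Qx_{\mu}^N,\Qx_{\mu}^*)\to0$. For any optimal coupling $\Pi_N\in{\cal P}(O\times O)$ of $(\Qx^N,\Qx^*)$ realizing ${\cal W}_{O,2}(\Qx^N,\Qx^*)$, the push-forward $(\pi\otimes\pi)_{\ast}\Pi_N$ is a (not necessarily optimal) coupling of $(\Qx_{\mu}^N,\Qx_{\mu}^*)$, so the 1-Lipschitz property of $\pi$ yields
\begin{align*}
{\cal W}_{\hat{S},2}^2(\Qx_{\mu}^N,\Qx_{\mu}^*)
\leq \int_{O\times O}d_{\hat{S}}^2(\pi(o_1),\pi(o_2))\,\Pi_N(do_1,do_2)
\leq {\cal W}_{O,2}^2(\Qx^N,\Qx^*).
\end{align*}
The right-hand side tends to $0$ by Theorem~\ref{thm:limitP}, so ${\cal W}_{\hat{S},2}(\Qx_{\mu}^N,\Qx_{\mu}^*)\to0$.

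Finally, uniqueness of the limit in ${\cal P}_2(\hat{S})$ follows: if $\Qx_{\mu}^{**}$ were any other cluster point of $(\Qx_{\mu}^N)$ in ${\cal P}_2(\hat{S})$, then along a subsequence one would have convergence of $\Qx_{\mu}^N$ to both $\Qx_{\mu}^*$ and $\Qx_{\mu}^{**}$ under ${\cal W}_{\hat{S},2}$, forcing $\Qx_{\mu}^{**}=\Qx_{\mu}^*$. No substantive obstacle arises: the precompactness in ${\cal P}_2(\hat{S})$ is supplied by Proposition~\ref{prop:relativecomW2}, the quadratic Wasserstein convergence of the joint laws is supplied by Theorem~\ref{thm:limitP}, and the only observation needed in addition is the trivial fact that a 1-Lipschitz push-forward contracts the quadratic Wasserstein distance.
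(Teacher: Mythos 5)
Your proposal is correct and follows exactly the route the paper intends: the paper states the corollary as an ``immediate consequence'' of Theorem~\ref{thm:limitP} without writing out details, and your argument — pushing forward the joint convergence in ${\cal W}_{O,2}$ through the 1-Lipschitz coordinate projection onto $\hat{S}$ — is precisely the standard marginalization step being left implicit.
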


\subsection{A Sufficient Condition for Weak Convergence~\eqref{eq:QNvartheta0}}\label{sec:assumptionAnu}

This section provides an easily verifiable sufficient condition on the initial sample law $\nu\in{\cal P}(\Xi_K^{\N})$ that guarantees the weak convergence \eqref{eq:QNvartheta0} assumed in Theorem~\ref{thm:limitP}:
\begin{itemize}
\item[{\Anu}] For $N\in\N$, define the mapping $I_N:\Xi_K^{\N}\to{\cal P}_2(E)$ as follows: for any $\hat{\zeta}=(\hat{X}^{i},\hat{Y}^{i})_{i=1}^{\infty}\in\Xi_K^{\N}$, $I_N(\hat{\zeta}):=\frac{1}{N}\sum_{i=1}^N\delta_{(\varepsilon^i,\hat{Y}^i,\hat{X}^{i})}$.
	Then, there exists a measurable mapping $I_*:\Xi_K^{\N}\to{\cal P}_2(E)$ such that
	\begin{align}\label{eq:assumptionAnu3}
	\nu\left(\left\{\hat{\zeta}\in\Xi_K^{\N};\ \lim_{N\to\infty}{\cal W}_{E,2}(I_N(\hat{\zeta}),I_*(\hat{\zeta}))=0\right\}\right)=1.
	\end{align}
\end{itemize}

The following remark presents an example of initial laws of training samples that satisfy {\Anu}:
\begin{remark}\label{rem:examAnu1}
	Consider any sequence of i.i.d. $\Xi_K$-valued r.v.s {$(\hat{X}^i,\hat{Y}^i)_{i=1}^{\infty}$} on some probability space $(\hat{\Omega},\hat{\F},\hat{\Px})$. Set $\hat{\zeta}=(\hat{\zeta}^i)_{i=1}^{\infty}=(\hat{X}^i,\hat{Y}^i)_{i=1}^{\infty}$ and hence $\hat{\zeta}\in\Xi_K^{\N}$. Then, for any sequence {$(\varepsilon^i)_{i=1}^{\infty}$ satisfying $\lim_{i\to\infty}\varepsilon^i=\varepsilon^*$,} the law of large of number (LLN) yields $\hat{\Px}(\{\omega\in\hat{\Omega};\ \lim_{N\to\infty}{\cal W}_{E,2}(I_N(\hat{\zeta}(\omega)),I_*)=0\})=1$.
In this specific setup,  $I_*:=\delta_{(\varepsilon^*,\gamma^*,\sigma^*)}\otimes\hat{\Px}\circ(\hat{\zeta}^1)^{-1}$. Consider the initial sample law $\nu:=\hat{\Px}\circ(\hat{\zeta})^{-1}$, then it holds that
	\begin{align}
	\nu\left(\left\{\hat{\zeta}\in\Xi_K^{\N};\ \lim_{N\to\infty}{\cal W}_{E,2}(I_N(\hat{\zeta}),I_*)=0\right\}\right)=\hat{\Px}\left(\left\{\omega\in\hat{\Omega};\ \lim_{N\to\infty}{\cal W}_{E,2}(I_N(\hat{\zeta}(\omega)),I_*)=0\right\}\right)=1.
	\end{align}
	Hence, the assumption {\Anu} is satisfied in this specific setup.
\end{remark}

Note that the convergence of relaxed controls $(Q_N)_{N=1}^{\infty}$ does not imply the weak convergence \eqref{eq:QNvartheta0}. The reason is that the empirical distribution of the initial data may not converge. Condition {\Anu} guarantees that the distribution of initial data is well behaved.
The following lemma, proven in the Appendix, shows that if the convergence of $(Q_N)_{N=1}^{\infty}$ converges, then {\Anu} implies the weak convergence in~\eqref{eq:QNvartheta0}.
\begin{lemma}\label{lem:Wtouni}
Let {\Anu} hold. Consider an arbitrary sequence $(Q_N)_{N=1}^{\infty},Q\subset{\cal Q}(\nu)$ with $\lim_{N\to\infty}{\cal W}_{{\Omega_{\infty}},2}(Q_N,Q)=0$, then $Q_N\circ(\zeta_N,I_N(\zeta_N),\theta_N)^{-1}=\!\!\Rightarrow Q\circ(\zeta,I_*(\zeta),\theta)^{-1}$, $N\to\infty$.
Here, $(\zeta,W,\theta)$ (resp. $(\zeta_N,W_N,\theta_N)$) is the coordinate process corresponding to $Q$ (resp. $Q_N$).
\end{lemma}

\subsection{Convergence of Sampled Objective Functionals}\label{sec:limitsample}

In this section, we prove the convergence, as the number of samples $N\to\infty$, of the sampled objective functional $J_N(Q)$ given by \eqref{eq:control2Q}, for a fixed $Q\in{\cal Q}(\nu)$. Such an analysis uses the generalized convergence result given in Theorem~\ref{thm:limitP}.

For fixed $Q\in{\cal Q}(\nu)$, let $(\zeta,W,\theta)$ be the canonical (or coordinate) process corresponding  to $Q$. For $i\geq1$, recall that ${X}^{\theta,i}=(X^{\theta,i}(t))_{t\in[0,T]}$ solves the SDE~\eqref{eq:modeli} driven by $(\zeta,W,\theta)$. Next, we introduce a new empirical measure-valued process given by
\begin{align}\label{eq:hatmuemp}
\hat{\mu}^N(t) := \frac{1}{N}\sum_{i=1}^N\delta_{(\varepsilon^i,X^{\theta,i}(t))},\quad{\rm for}\ t\in[0,T].
\end{align}
The empirical process $\hat{\mu}^N=(\hat{\mu}^N(t))_{t\in[0,T]}$ can be viewed as the counterpart of $\mu^N$ defined in \eqref{eq:empiricalpmN}, but driven by $(\zeta,W,\theta)$ instead of the sample $(\zeta_N,W_N,\theta_N)$ from $Q_N\in{\cal Q}(\nu)$. We then define the law of $\hat{\mu}^N$ as:
\begin{align}\label{eq:lawhatmuemp}
\hat{\Qx}^N:=Q\circ(\hat{\mu}^N)^{-1}.
\end{align}
Using \eqref{eq:hatmuemp} and \eqref{eq:lawhatmuemp}, we may rewrite the sampled objective functional $J_N(Q)$ in \eqref{eq:control2Q} as follows:
\begin{align}\label{eq:control25}
J_N(Q)&=\ExQ\left[\lc\hat{\mu}^N(T),L\rc\right]+\frac{\beta}{\alpha}\ExQ\left[\int_0^T \lc\hat{\mu}^N(t),L\rc dt\right]+\ExQ\left[\int_0^T\{\lambda_1|\theta(t)|^2+\lambda_2|\theta'(t)|^2\}dt\right]\\
&=\int_{\hat{S}}\lc \vartheta(T),L\rc\hat{\Qx}^N(d\vartheta)+\frac{\beta}{\alpha}\int_0^T\left(\int_{\hat{S}}\lc\vartheta(t),L\rc\hat{\Qx}^N(d\vartheta)\right)dt
+\ExQ\left[\int_0^T\{\lambda_1|\theta(t)|^2+\lambda_2|\theta'(t)|^2\}dt\right].\notag
\end{align}
In the above expression, we recall that $\lc\mu,f\rc:=\int fd\mu$ for $\mu\in{\cal P}(E)$ and the loss function is defined by $L(e)=\alpha|x-y|^2$ where {$e=(\varepsilon,y,x)\in E$}. By applying Lemma~\ref{lem:Wtouni} and Corollary~\ref{coro:uniquemuN}, we immediately get  the following result.
\begin{lemma}\label{lem:Qmuthetastar}
Let {\Afg}, {\Atheta} and {\Anu} hold. Then, the precompact sequence $(\hat{\Qx}^N)_{N=1}^{\infty}\subset{\cal P}_2(\hat{S})$ has a unique limit point $\hat{\Qx}^*\in{\cal P}_2(\hat{S})$ satisfying ${\cal W}_{\hat{S},2}(\hat{\Qx}^N,\hat{\Qx}^*)\to0$ as $N\to\infty$.
\end{lemma}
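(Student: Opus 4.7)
The lemma is essentially a bookkeeping corollary of the machinery developed in Section~\ref{sec:convergencesample}, specialized to the stationary sequence $Q_N\equiv Q$. The key identification is the following: when $Q_N=Q$ for every $N$, the canonical process $(\zeta_N,W_N,\theta_N)$ coincides with $(\zeta,W,\theta)$, and therefore the strong solution $(X_N^i,Z_N^i)$ of \eqref{eq:modelinonstark} coincides with $(X^{\theta,i},Z^i)$ of \eqref{eq:modeli}. Consequently the empirical process $\mu^N$ of \eqref{eq:empiricalpmN} equals $\hat{\mu}^N$ of \eqref{eq:hatmuemp}, and $\Qx_\mu^N=\hat{\Qx}^N$ for all $N\geq 1$. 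It therefore suffices to verify the convergence hypothesis \eqref{eq:QNvartheta0} of Theorem~\ref{thm:limitP}, on which Corollary~\ref{coro:uniquemuN} rests, and then invoke the latter.

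To check \eqref{eq:QNvartheta0}, note that the initial values $(X^{\theta,i}(0),Y^i(0),Z^i(0))=\zeta^i$ are precisely the coordinates of the initial sample data, so by the definition of $I_N$ in Assumption~\Anu{} one has the identity $\hat{\mu}^N(0)=I_N(\zeta)$. The trivial choice $Q_N\equiv Q$ satisfies ${\cal W}_{\Omega_\infty,2}(Q_N,Q)=0\to 0$, so Lemma~\ref{lem:Wtouni} applies and yields
\begin{equation*}
Q\circ(\hat{\mu}^N(0),\theta)^{-1}=Q\circ(I_N(\zeta),\theta)^{-1}\ =\!\!\Rightarrow\ Q\circ(I_*(\zeta),\theta)^{-1}=:\vartheta_0,
\end{equation*}
which is exactly \eqref{eq:QNvartheta0} in ${\cal P}({\cal P}_2(E)\times{\cal C}_m)$ for the constant sequence. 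Equivalently, one could bypass Lemma~\ref{lem:Wtouni} and observe that \Anu{} combined with $Q\circ\zeta^{-1}=\nu$ forces $I_N(\zeta)\to I_*(\zeta)$ $Q$-a.s., after which the continuous-mapping theorem produces the same weak convergence.

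With \eqref{eq:QNvartheta0} in hand, Corollary~\ref{coro:uniquemuN} applies directly (with $\Qx_\mu^N$ replaced by $\hat{\Qx}^N$ under the identification above) and yields a unique $\hat{\Qx}^{*}\in{\cal P}_2(\hat{S})$ satisfying ${\cal W}_{\hat{S},2}(\hat{\Qx}^N,\hat{\Qx}^{*})\to 0$. No new compactness or uniqueness argument is required: precompactness in ${\cal P}_2(\hat{S})$ is supplied by Proposition~\ref{prop:relativecomW2}, uniqueness of the weak limit point by the FPK identification in Proposition~\ref{prop:limitP1} together with Lemma~\ref{lem:unilimit}, and upgrade from ${\cal P}(\hat{S})$-weak convergence to ${\cal W}_{\hat{S},2}$ convergence follows from the uniform-integrability estimate already exhibited in the proof of Theorem~\ref{thm:limitP}. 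The only point requiring care is the identification $\mu^N=\hat{\mu}^N$ under the constant choice $Q_N\equiv Q$; once that is made explicit, the conclusion is immediate, so no substantive obstacle is expected.
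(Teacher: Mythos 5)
Your proposal is correct and matches the paper's approach: the paper gives no separate proof, stating only that the lemma follows "by applying Lemma~\ref{lem:Wtouni} and Corollary~\ref{coro:uniquemuN}," which is precisely the route you take. Your explicit identification of $\hat{\mu}^N$ with $\mu^N$ (and $\hat{\mu}^N(0)$ with $I_N(\zeta)$) under the constant choice $Q_N\equiv Q$, followed by verification of \eqref{eq:QNvartheta0} via Lemma~\ref{lem:Wtouni}, simply fills in the details the paper leaves implicit.
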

{Moreover, the limit point $\hat{\Qx}^*\in{\cal P}_2(\hat{S})$ can be explicitly characterized, as shown in the following lemma whose proof is reported in the Appendix.}
\begin{lemma}\label{lem:limitQstar}
Let {\Afg}, {\Atheta} and {\Anu} hold. Assume $(Q_N)_{N=1}^{\infty},Q\subset{\cal Q}(\nu)$ satisfy $\lim_{N\to\infty}{\cal W}_{\Omega_{\infty},2}(Q_N,Q)=0$. Then $\Qx^N$ defined by \eqref{eq:QxN} converges to $Q\circ(I_*,\theta,\mu_*)^{-1}$ in ${\cal P}_2({\cal P}_2(E)\times{\cal C}_m\times\hat{S})$, as $N\to\infty$, where $I_*$ is given in {\Anu}, and $Q$-a.s., $\mu_*$ is the unique solution of FPK equation: for all $\varphi\in{\cal D}$,
\begin{align}\label{eq:mustar1}
\left\{
  \begin{array}{ll}
    \displaystyle \lc\mu_*(t),\varphi(t)\rc - \lc\mu_*(0),\varphi(0)\rc - \int_0^t\lc\mu_*(s),{{\cal A}^{\theta,\lc\mu_*(s),\rho\rc}}\varphi(s)\rc ds=0, & t\in(0,T];\\[0.6em]
\displaystyle \mu_*(0) =I_*.
  \end{array}
\right.
\end{align}
Moreover, it holds that $\hat{\Qx}^*=Q\circ(I_*,\theta,\mu_*)^{-1}$.
\end{lemma}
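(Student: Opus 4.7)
The plan is to apply Theorem \ref{thm:limitP} to the sequence $(Q_N)_{N=1}^\infty$ and then identify the limit via the uniqueness of the FPK equation \eqref{eq:mustar} established in Proposition \ref{prop:limitP1}. First I would verify the hypothesis \eqref{eq:QNvartheta0}. From the definition of the empirical measure in \eqref{eq:empiricalpmN} and of $I_N$ in {\Anu}, one reads off that $\mu^N(0) = I_N(\zeta_N)$. Combining this with the standing hypothesis $\lim_{N\to\infty}{\cal W}_{\Omega_\infty,2}(Q_N,Q)=0$, Lemma \ref{lem:Wtouni} directly gives
$$
Q_N \circ (\mu^N(0),\theta_N)^{-1} \;=\; Q_N \circ (I_N(\zeta_N),\theta_N)^{-1} \;=\!\!\Rightarrow\; Q\circ (I_*(\zeta),\theta)^{-1} \,=:\, \vartheta_0,
$$
so Theorem \ref{thm:limitP} applies and produces a ${\cal W}_{{\cal P}_2(E)\times{\cal C}_m\times\hat{S},2}$-limit $\Qx^*$ of $(\Qx^N)_{N=1}^\infty$. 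Any realization $(\hat{\mu}_0,\hat{\theta},\hat{\mu})$ of $\Qx^*$ on some $(\hat{\Omega},\hat{\F},\hat{\Px})$ satisfies $\hat{\Px}\circ(\hat{\mu}_0,\hat{\theta})^{-1}=\vartheta_0$ and solves the FPK equation \eqref{eq:mustar} $\hat{\Px}$-a.s.

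Second, I would identify $\Qx^*$ with the candidate $Q\circ(I_*(\zeta),\theta,\mu_*)^{-1}$, where $\mu_*$ is defined on the original canonical space $(\Omega_\infty,\F_\infty,Q)$ as the pathwise-unique FPK solution \eqref{eq:mustar1} with initial condition $I_*(\zeta)$ and driver $\theta$ (existence and uniqueness are granted by Proposition \ref{prop:limitP1}). By construction both triples $(\hat{\mu}_0,\hat{\theta},\hat{\mu})$ and $(I_*(\zeta),\theta,\mu_*)$ have first two marginals equal to $\vartheta_0$. Mimicking the argument in Lemma \ref{lem:unilimit}, I would use the Gluing lemma to couple them on a common probability space $(\bar{\Omega},\bar{\F},\bar{\Px})$ in such a way that the $(\hat{\mu}_0,\hat{\theta})$-copy and the $(I_*(\zeta),\theta)$-copy agree $\bar{\Px}$-a.s. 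Conditionally on this coupling, the third components solve the same FPK equation with the same data, so the $\omega$-by-$\omega$ uniqueness from Proposition \ref{prop:limitP1} forces $\hat{\mu}=\mu_*$ $\bar{\Px}$-a.s., whence the joint laws agree and $\Qx^* = Q\circ(I_*(\zeta),\theta,\mu_*)^{-1}$.

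Third, the closing identification $\hat{\Qx}^* = Q\circ(I_*,\theta,\mu_*)^{-1}$ follows by specializing the convergence just proved to the constant sequence $Q_N \equiv Q$. Under this specialization the empirical measure $\mu^N$ in \eqref{eq:empiricalpmN} coincides with $\hat{\mu}^N$ in \eqref{eq:hatmuemp}, so $\Qx^N$ is naturally the joint-law enrichment of $\hat{\Qx}^N$ with the extra factors $(\mu^N(0),\theta)$; passing to the ${\cal W}_2$-limit and invoking the uniqueness of $\hat{\Qx}^*$ from Lemma \ref{lem:Qmuthetastar} gives the stated formula.

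The main obstacle is the second step: Proposition \ref{prop:limitP1} provides uniqueness of the FPK equation only pathwise, i.e.\ after fixing the initial condition and driver, so bridging the a priori distinct canonical spaces on which $(\hat{\mu}_0,\hat{\theta},\hat{\mu})$ and $(I_*(\zeta),\theta,\mu_*)$ live requires a careful Gluing construction that simultaneously preserves the FPK constraint on both copies. Once this coupling is set up, the conclusion is immediate, but the bookkeeping of adapted filtrations and the measurable dependence of $\mu_*$ on $(I_*(\zeta),\theta)$ must be handled with the same care as in Lemma \ref{lem:unilimit}.
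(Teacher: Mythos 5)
Your overall strategy matches the paper's proof almost step for step: verify the hypothesis \eqref{eq:QNvartheta0} via Lemma~\ref{lem:Wtouni} using $\mu^N(0)=I_N(\zeta_N)$, invoke Theorem~\ref{thm:limitP} to obtain the ${\cal W}_2$-limit, identify that limit with $Q\circ(I_*,\theta,\mu_*)^{-1}$ by a Gluing-lemma coupling plus trajectory uniqueness as in Lemma~\ref{lem:unilimit}, and finally specialize to $Q_N\equiv Q$ to read off $\hat{\Qx}^*$.

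There is, however, one genuine gap. You assert that ``existence and uniqueness are granted by Proposition~\ref{prop:limitP1}'' and on that basis define $\mu_*$ on the canonical space $(\Omega_\infty,\F_\infty,Q)$. But Proposition~\ref{prop:limitP1} only delivers (a) that any weak limit point of $(\Qx^N)$ solves the FPK equation a.s.\ on \emph{its own} probability space $(\hat\Omega,\hat\F,\hat\Px)$, and (b) pathwise uniqueness given the initial datum and driver. It does not produce a solution $\mu_*$ living on $(\Omega_\infty,\F_\infty,Q)$ jointly with the coordinate process $(\zeta,W,\theta)$ — and without such a $\mu_*$ the candidate law $Q\circ(I_*,\theta,\mu_*)^{-1}$ that you want to glue against is not even defined. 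The paper closes this gap by an explicit existence argument: for each fixed pair $(I_*,\theta)$ it constructs a weak solution of the McKean--Vlasov SDE \eqref{eq:XxiIstartheta} and checks that the flow of marginal laws $\mu(t)=\hat{\Px}\circ((\xi,Y(0)),Z^{\xi,I_*}(t),X^{\xi,I_*,\theta}(t))^{-1}$ solves \eqref{eq:mustar1}; combined with uniqueness this yields the (measurable) solution map needed to define $\mu_*$ on the canonical space. You flag the measurability issue in your closing paragraph, but the missing ingredient is this existence construction, not additional bookkeeping in the gluing step. Once it is supplied, the rest of your argument goes through exactly as in the paper.
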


For a given $Q\in{\cal Q}(\nu)$ and the unique limit point $\hat{\Qx}^*\in{\cal P}_2(\hat{S})$ from Lemma~\ref{lem:limitQstar}, we define
\begin{align}\label{eq:control25JQ}
J(Q)&:=\int_{\hat{S}}\lc \vartheta(T),L\rc\hat{\Qx}^*(d\vartheta)+\frac{\beta}{\alpha}\int_0^T\left(\int_{\hat{S}}\lc\vartheta(t),L\rc\hat{\Qx}^*(d\vartheta)\right)dt+\ExQ\left[\int_0^T\{\lambda_1|\theta(t)|^2+\lambda_2|\theta'(t)|^2\}dt\right].
\end{align}
By Lemma \ref{lem:estimateXN} in the Appendix, we then have that $\sup_{N\geq1}J_N(Q)<+\infty$ for each $Q\in{\cal Q}(\nu)$. We are now ready to state the main result of this section:
\begin{proposition}\label{prop:JNlimitJ}
Let {\Afg}, {\Atheta} and {\Anu} hold. Then, for any $Q\in{\cal Q}(\nu)$,
\begin{align}\label{eq:JNQJQ00}
\lim_{N\to\infty}J_N(Q)=J(Q),
\end{align}
where $J_N(Q)$ and $J(Q)$ for $Q\in{\cal Q}(\nu)$ are defined by  \eqref{eq:control25} and \eqref{eq:control25JQ} respectively.
\end{proposition}

\begin{proof}
{To prove the proposition, we apply Lemma~\ref{lem:Qmuthetastar} and Theorem~7.12 of \cite{Villani2003}.}
Let $t\in[0,T]$, and define ${\cal L}^t(\vartheta):=\langle\vartheta(t),L\rangle=\int_{E}L(e)\vartheta(t,de)$ for all $\vartheta\in\hat{S}=C([0,T];{\cal P}_2(E))$. First of all, it follows from \eqref{eq:dhatS} that
\begin{align}
\left|{\cal L}^t(\vartheta)\right|=\int_{E}L(x,y)\vartheta(t,de)\leq 2\int_{E}|e|^2\vartheta(t,de)\leq 2\sup_{t\in[0,T]}{\cal W}_{E,2}^2(\vartheta(t),\delta_0)=2d_{\hat{S}}^2(\vartheta,\delta_0),
\end{align}
where {$e=(\varepsilon,y,x)\in E$}. {This shows that ${\cal L}^t$ satisfies the growth condition on $(\hat{S},d_{\hat{S}})$ with $p=2$ in Theorem~7.12-(iv) of \cite{Villani2003}}. Next, assume that $(\vartheta_l)_{l\geq1}\subset\hat{S}$ satisfy $\vartheta_l\to\vartheta$ on $(\hat{S},d_{\hat{S}})$, as $l\to\infty$. This implies that $\sup_{t\in[0,T]}{\cal W}_{E,2}(\vartheta_l(t),\vartheta(t))\to0$ as $l\to\infty$. Using Theorem~7.12 of \cite{Villani2003}, it follows that for any continuous function $\phi$ on $E$ satisfying the quadratic growth, $\langle\vartheta_l(t),\phi\rangle\to\langle\vartheta(t),\phi\rangle$ as $l\to\infty$. Note that $\phi(e):=|x-y|^2\leq2|e|^2$ and hence ${\cal L}^t(\vartheta_l)\to{\cal L}^t(\vartheta)$ as $l\to\infty$. Thus, we have shown that ${\cal L}^t$ is continuous and satisfies the quadratic growth on $(\hat{S},d_{\hat{S}})$. By Lemma~\ref{lem:Qmuthetastar},  ${\cal W}_{\hat{S},2}(\hat{\Qx}^N,\hat{\Qx}^*)\to0$ as $N\to\infty$. Again, by Theorem~7.12 of \cite{Villani2003}, we conclude that
\begin{align}\label{eq:gNtconver}
g_N(t):=\int_{\tilde{S}}{\cal L}^t(\vartheta)\hat{\Qx}^N(d\vartheta)\to \int_{\hat{S}}{\cal L}^t(\vartheta)\hat{\Qx}^*(d\vartheta),\quad N\to\infty.
\end{align}
Moreover, for $\epsilon>0$, using Jensen's inequality and Lemma \ref{lem:estimateXN}, we deduce the existence of a positive constant $C_{\epsilon,T}$ which only depends on $\epsilon,T$ such that
\begin{align}
&\sup_{N\geq1}\int_0^T \left|g_N(t)\right|^{1+\epsilon/2}dt=\sup_{N\geq1}\int_0^T \left|\ExQ\left[\langle\hat{\mu}^N(t),L\rangle\right]\right|^{1+\epsilon/2}dt\leq\sup_{N\geq1}\ExQ\left[\int_0^T\left|\langle\hat{\mu}^N(t),L\rangle\right|^{1+\epsilon/2}dt\right]\\
&\quad\leq\sup_{N\geq1}\frac{T}{N}\sum_{i=1}^N\ExQ\left[\sup_{t\in[0,T]}\left|X^{\theta,i}(t)-Y^i(0)\right|^{2+\epsilon}\right]\leq C_{\epsilon,T}\left\{\sup_{N\geq1}
\frac{1}{N}\sum_{i=1}^N\ExQ\left[\left\|X^{\theta,i}\right\|_T^{2+\epsilon}\right]+K^{2+\epsilon}\right\}<+\infty.\nonumber
\label{eq:intgNconv}
\end{align}
This implies that, as $R\to\infty$,
\begin{align}
\sup_{N\geq1}\int_{\{t\in[0,T];~|g_N(t)|\geq R\}}|g_N(t)|dt\leq \frac{1}{R^{\epsilon/2}}\sup_{N\geq1}\int_{\{t\in[0,T];~|g_N(t)|\geq R\}}|g_N(t)|^{1+\epsilon/2}dt\to 0.
\end{align}
Then, by Vitali's convergence theorem together with \eqref{eq:gNtconver} and \eqref{eq:intgNconv}, it follows that
\begin{align}\label{eq:intgNtconv}
\int_0^Tg_N(t)dt=\int_0^T\left(\int_{\tilde{S}}{\cal L}^t(\vartheta)\hat{\Qx}^N(d\vartheta)\right) dt\to \int_0^T\left(\int_{\hat{S}}{\cal L}^t(\vartheta)\hat{\Qx}^*(d\vartheta)\right) dt,\quad N\to\infty.
\end{align}
The desired convergence then follows from \eqref{eq:gNtconver} and \eqref{eq:intgNtconv}, recalling the expressions of $J_N(Q)$ and $J(Q)$ given, respectively, by \eqref{eq:control25} and \eqref{eq:control25JQ}.
\end{proof}

\section{Convergence of Minimizers of Sampled Objective Functionals}\label{sec:Gamma-convergence}

In this section, we show that the sequence of minimizers of the sampled objective functionals converges to the minimizer of the limiting objective functional, if $N$ is large enough. To establish this result, a {key step} is to prove the so-called $\Gamma$-convergence of $J_N$ to $J$ (see \eqref{eq:control25} and \eqref{eq:control25JQ}).

Before introducing the main result of this section, we first metrize the space ${\cal Q}(\nu)\subset{\cal P}_2(\Omega_{\infty})$ by taking the quadratic Wasserstein distance ${\cal W}_{\Omega_{\infty},2}$ on ${\cal Q}(\nu)$. The main result of the paper is as follows:
\begin{theorem}\label{thm:miniconver}
Let {\Afg}, {\Atheta} and {\Anu} hold. Then, it holds that
\begin{align}\label{eq:convJNJ}
\inf_{Q\in{\cal Q}(\nu)}J_N(Q)\to\inf_{Q\in{\cal Q}(\nu)}J(Q),\qquad N\to\infty,
\end{align}
where the minimum of $J(Q)$ over $Q\in{\cal Q}(\nu)$ exists. Moreover, if the minimizing sequence $(Q_N)_{N=1}^{\infty}\subset{\cal Q}(\nu)$ (up to a subsequence) converges to some $Q^*\in{\cal Q}(\nu)$ in ${\cal W}_{\Omega_{\infty},2}$, then $Q^*$ minimizes $J(Q)$ over $Q\in{\cal Q}(\nu)$.
\end{theorem}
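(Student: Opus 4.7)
\textbf{Proof proposal (plan for Theorem~\ref{thm:miniconver}).}
The strategy is the standard Gamma-convergence triangle: (a) a trivial recovery sequence giving the $\limsup$ bound on the infima, (b) equi-coercivity of the minimizing sequence yielding a $\cal{W}_{\Omega_\infty,2}$-limit point $Q^*\in{\cal Q}(\nu)$, and (c) a $\liminf$ inequality $J(Q^*)\le\liminf_N J_N(Q_N^*)$ from which the joint conclusion (convergence of infima plus optimality of every subsequential limit of minimizers) follows at once. Note first that under {\Atheta} the state space $\Theta$ is compact, so any $\theta\in{\cal U}^{Q,\Fx}$ satisfies $\|\theta\|_T\le K_\Theta:=\sup_{x\in\Theta}|x|$; hence the metric $d$ defined in \eqref{eq:dinfty} is bounded on $\Omega_\infty$ when restricted to paths with $\theta$-coordinate in $\Theta$, and consequently ${\cal W}_{\Omega_\infty,2}$-convergence on ${\cal Q}(\nu)$ is equivalent to weak convergence.

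\emph{Step 1 (limsup inequality on infima).} Fix any $Q\in{\cal Q}(\nu)$. Theorem~\ref{thm:JNlimitJ} gives $J_N(Q)\to J(Q)$, so $\limsup_N\inf_{\tilde Q\in{\cal Q}(\nu)}J_N(\tilde Q)\le J(Q)$ for every $Q$, and therefore $\limsup_N\inf J_N\le\inf J$. In particular $\inf J<+\infty$.

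\emph{Step 2 (equi-coercivity and tightness of minimizers).} By Theorem~\ref{thm:reaxed-sol}, $\alpha_N=\inf J_N$ is attained at some $Q_N^*\in{\cal Q}(\nu)$. From Step 1, $\sup_N\alpha_N<+\infty$, and the squared form \eqref{eq:squareLR} gives
\begin{align*}
(\lambda_1\wedge\lambda_2)\,\Ex^{Q_N^*}\!\left[\int_0^T\!\big(|\theta_N(t)|^2+|\theta_N'(t)|^2\big)dt\right]\le \alpha_N,\qquad N\ge1.
\end{align*}
Repeating verbatim the tightness argument in the proof of Theorem~\ref{thm:reaxed-sol} (Arzel\`a--Ascoli modulus bounds for $\theta_N$, plus tightness of the initial-sample and Wiener marginals via Definition~\ref{def:relax-sol}), one extracts a subsequence $Q_{N_k}^*$ converging weakly to some $Q^*\in{\cal Q}(\nu)$; the ${\cal H}_m^1$-character of the limiting $\theta^*$-coordinate is obtained exactly as in that proof via Hahn--Banach, Riesz representation, and weak compactness of $(\theta_{N_k}^{*\prime})$ in $L^2((0,T)\times\Omega^*)$. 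Because $d$ is bounded, this weak limit is simultaneously a ${\cal W}_{\Omega_\infty,2}$-limit.

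\emph{Step 3 (liminf inequality).} Suppose now $Q_N\to Q^*$ in ${\cal W}_{\Omega_\infty,2}$ (along a subsequence). Split $J_N(Q_N)$ into the three summands in \eqref{eq:control2Q}. For the first two (the terminal and running losses), Lemma~\ref{lem:limitQstar} gives $\Qx^N=Q_N\circ(\mu^N(0),\theta_N,\mu^N)^{-1}\to Q^*\circ(I_*,\theta,\mu_*)^{-1}$ in ${\cal P}_2({\cal P}_2(E)\times{\cal C}_m\times\hat S)$, whence the continuity/quadratic-growth argument used in the proof of Theorem~\ref{thm:JNlimitJ} (via Theorem~7.12 of \cite{Villani2003} and Vitali's theorem) yields \emph{convergence} of these two terms to their $J(Q^*)$-counterparts, not merely liminf. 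For the $\lambda_1|\theta|^2$ regularizer, $\theta\mapsto\int_0^T|\theta(t)|^2dt$ is continuous and bounded on the compact-valued control space $\Theta^{[0,T]}$, so the bounded-convergence half of Theorem~7.12 of \cite{Villani2003} gives $\Ex^{Q_N}[\int_0^T|\theta_N|^2dt]\to\Ex^{Q^*}[\int_0^T|\theta|^2dt]$. For the $\lambda_2|\theta'|^2$ term, which is \emph{not} continuous for the topology of $d_3$, invoke Skorokhod representation to realize $\theta_N\to\theta^*$ uniformly, $\Px^*$-a.s.; using the same Hahn--Banach/Riesz/weak-$L^2$ argument appearing between \eqref{eq:Pstarconver} and \eqref{eq:thetaderifatou}, one identifies the weak $L^2$-limit of $(\theta_N^{*\prime})$ with $(\theta^{*\prime})$ and then applies weak lower semicontinuity of the convex integral functional $u\mapsto\Ex^*[\int_0^T|u|^2dt]$ to conclude
\begin{align*}
\Ex^{Q^*}\!\left[\int_0^T|\theta'(t)|^2dt\right]\le\liminf_{N\to\infty}\Ex^{Q_N}\!\left[\int_0^T|\theta_N'(t)|^2dt\right].
\end{align*}
Summing the three bounds yields $J(Q^*)\le\liminf_N J_N(Q_N)$.

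\emph{Step 4 (conclusion).} Applying Step 3 to the minimizing sequence of Step 2, $\inf J\le J(Q^*)\le\liminf_N J_N(Q_N^*)=\liminf_N\inf J_N$; combined with Step 1 this forces $\inf J_N\to\inf J$ and $J(Q^*)=\inf J$, proving both \eqref{eq:convJNJ} and the optimality of any ${\cal W}_{\Omega_\infty,2}$-subsequential limit of a minimizing sequence. The main technical difficulty is the $|\theta'|^2$-term in Step 3: the ambient metric controls only sup-norm of $\theta$, so identification of the weak $L^2$-limit of the derivatives and the use of convex lower semicontinuity (as opposed to simple continuity) is essential; compactness of $\Theta$ is what tames the rest of the argument by making $d$ bounded and equating ${\cal W}_{\Omega_\infty,2}$- and weak convergence on ${\cal Q}(\nu)$.
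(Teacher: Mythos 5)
Your proposal is correct and follows essentially the same route as the paper: the limsup bound via the constant recovery sequence and Theorem~\ref{thm:JNlimitJ}, precompactness of the minimizers by rerunning the tightness machinery of Theorem~\ref{thm:reaxed-sol} (the paper's Lemma~\ref{lem:precompactQN}), and the liminf inequality by combining Lemma~\ref{lem:limitQstar} for the loss terms with the Hahn--Banach/Riesz identification of the weak $L^2$-limit of the derivatives and weak lower semicontinuity of the convex $|\theta'|^2$-functional (the paper's Proposition~\ref{thm:convergenceJNthetaN}). Your shortcut of noting that {\Atheta} makes the metric $d$ bounded on the support of measures in ${\cal Q}(\nu)$, so that weak convergence upgrades to ${\cal W}_{\Omega_\infty,2}$-convergence, is just a cleaner phrasing of the uniform-integrability/Vitali step the paper uses for the same purpose.
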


\begin{proof}
The proof of Theorem~\ref{thm:miniconver} requires proving (i) the Gamma-convergence of $J_N$ to $J$, which is done in Proposition~\ref{thm:convergenceJNthetaN}; and (ii) that the minimizing sequence of the sampled optimization problem \eqref{eq:relaxed-controlQ} is precompact in ${\cal W}_{\Omega_{\infty},2}$, which is shown in Lemma~\ref{lem:precompactQN}.
\end{proof}

We next {give the definition of} Gamma-convergence of the sequence of sampled objective functionals $(J_N)_{N=1}^{\infty}$ on $({\cal Q}(\nu),\WINFTY)$ (see, e.g.~\cite{Dalmaso1993}):
\begin{definition}\label{def:Gammaconvergence}
$J_N:{\cal Q}(\nu)\to\R$ Gamma-converges to some functional $J:{\cal Q}(\nu)\to\R$, i.e., $J=\Gamma\mbox{-}\lim_{N\to\infty}J_N$ on ${\cal Q}(\nu)$, if the following conditions hold:
\begin{itemize}
  \item[{\rm\bf(i)}]~{\rm\bf(liminf~inequality):} For any $Q\in{\cal Q}(\nu)$ and every sequence $(Q_N)_{N=1}^{\infty}$ converging to $Q$ in $({\cal Q}(\nu),\WINFTY)$, we have that $\liminf_{N\to\infty}J_N(Q_N)\geq J(Q)$;
  \item[{\rm\bf(ii)}]~{\rm\bf(limsup~inequality):}  For any $Q\in{\cal Q}(\nu)$, there exists a sequence $(\bar{Q}_N)_{N=1}^{\infty}$ which converges to $Q$ in $({\cal Q}(\nu),\WINFTY)$ (this sequence is said to be a $\Gamma$-realising sequence), such that $\limsup_{N\to\infty}J_N(\bar{Q}_N)\leq J(Q)$.
\end{itemize}
\end{definition}

The following proposition shows that $J_N$ Gamma-converges to $J$ as $N\to\infty$.
\begin{proposition}\label{thm:convergenceJNthetaN}
Let {\Afg}, {\Atheta} and {\Anu} hold. Then $J=\Gamma\mbox{-}\lim_{N\to\infty}J_N$ on $({\cal Q}(\nu),\WINFTY)$.
\end{proposition}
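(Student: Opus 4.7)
The plan is to verify the two conditions in Definition~\ref{def:Gammaconvergence} separately. The limsup inequality is essentially free: for any $Q\in{\cal Q}(\nu)$, the constant recovery sequence $\bar Q_N:=Q$ satisfies $\WINFTY(\bar Q_N,Q)=0$ for every $N$, and Theorem~\ref{thm:JNlimitJ} gives $J_N(\bar Q_N)=J_N(Q)\to J(Q)$, so $\limsup_{N\to\infty}J_N(\bar Q_N)=J(Q)\le J(Q)$. The substantial work is the liminf inequality, and its core difficulty will turn out to be the lower semicontinuity of the Sobolev regulariser.

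\emph{Liminf inequality, weak-loss part.} Let $(Q_N)_{N=1}^\infty\subset{\cal Q}(\nu)$ with $\WINFTY(Q_N,Q)\to 0$. The case $\liminf_N J_N(Q_N)=+\infty$ is immediate, so extracting a subsequence I may assume $M:=\sup_N J_N(Q_N)<\infty$. The Sobolev regulariser then yields the a priori estimate
\[
\sup_{N\ge 1}\Ex^{Q_N}\!\left[\int_0^T\{|\theta_N(t)|^2+|\theta_N'(t)|^2\}\,dt\right]\le\frac{M}{\lambda_1\wedge\lambda_2}.
\]
By Lemma~\ref{lem:Wtouni}, $\WINFTY$-convergence forces $Q_N\circ(\mu^N(0),\theta_N)^{-1}=\!\!\Rightarrow Q\circ(I_*(\zeta),\theta)^{-1}$, so hypothesis \eqref{eq:QNvartheta0} of Theorem~\ref{thm:limitP} is satisfied. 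Lemma~\ref{lem:limitQstar} upgrades this to $\Qx^N:=Q_N\circ(\mu^N(0),\theta_N,\mu^N)^{-1}\to Q\circ(I_*,\theta,\mu_*)^{-1}$ in ${\cal P}_2({\cal P}_2(E)\times{\cal C}_m\times\hat S)$, with $\mu_*$ the unique trajectory solution of the FPK equation \eqref{eq:mustar1}. Projecting onto the last coordinate yields ${\cal W}_{\hat S,2}(\Qx^N_\mu,\hat\Qx^*)\to 0$ with $\hat\Qx^*=Q\circ\mu_*^{-1}$. Rewriting $J_N(Q_N)$ as in \eqref{eq:control25}, the terminal-loss term $\int_{\hat S}\lc\vartheta(T),L\rc\Qx^N_\mu(d\vartheta)$ and the running-loss term converge to the corresponding pieces of $J(Q)$ by the argument already used in Theorem~\ref{thm:JNlimitJ}: $\vartheta\mapsto\lc\vartheta(t),L\rc$ is continuous with quadratic growth on $(\hat S,d_{\hat S})$, and Vitali's theorem handles the $t$-integral thanks to the uniform $(2+\epsilon)$-moment bound supplied by Lemma~\ref{lem:estimateXN}.

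\emph{Regulariser and main obstacle.} The only genuinely nontrivial step, and the main obstacle of the proof, is the lower semicontinuity of $Q\mapsto\Ex^Q[\int_0^T\{\lambda_1|\theta|^2+\lambda_2|\theta'|^2\}\,dt]$: the metric $\WINFTY$ only controls the sup-norm on the $\theta$-coordinate, so the weak derivative cannot be transported through the limit directly. I will use Skorokhod representation to realise $\theta_N^*\overset{d}{=}\theta_N$ and $\theta^*\overset{d}{=}\theta$ on a common space $(\Omega^*,\F^*,\Px^*)$ with $\theta_N^*\to\theta^*$ in $({\cal C}_m,\|\cdot\|_T)$, $\Px^*$-a.s. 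For the $|\theta|^2$-piece the $\WINFTY$-convergence gives $\Ex^*[\|\theta_N^*-\theta^*\|_T^2]\to 0$ (Theorem~7.12 of \cite{Villani2003} combined with a.s.\ Skorokhod convergence), hence $\Ex^*[\int_0^T|\theta_N^*|^2 dt]\to\Ex^*[\int_0^T|\theta^*|^2 dt]$. For the $|\theta'|^2$-piece, the a priori bound shows $(\theta_N^{*\prime})_N$ is bounded in $L^2((0,T)\times\Omega^*;dt\otimes d\Px^*)$, so it admits a weak $L^2$-limit $\hat\theta^*$ along a further subsequence. I then reproduce verbatim the ${\cal D}$-test-function identification from the proof of Theorem~\ref{thm:reaxed-sol}: for any $\varphi\in{\cal D}$ the identity $(\theta_N^*,\varphi')=-(\theta_N^{*\prime},\varphi)$ passes to the limit (sup-norm convergence on the left, weak $L^2$-convergence on the right), separability of ${\cal D}$ yields $(\theta^*,\varphi')=-(\hat\theta^*,\varphi)$ $\Px^*$-a.s.\ for all $\varphi\in{\cal D}$, and hence $\hat\theta^*=(\theta^*)'$. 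Weak lower semicontinuity of the $L^2$-norm then gives $\Ex^*[\int_0^T|(\theta^*)'|^2 dt]\le\liminf_N\Ex^*[\int_0^T|\theta_N^{*\prime}|^2 dt]$. Summing the three asymptotic (in)equalities over the three terms of \eqref{eq:control25} produces $\liminf_N J_N(Q_N)\ge J(Q)$, completing the proof.
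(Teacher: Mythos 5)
Your proof is correct and follows essentially the same route as the paper's: a constant recovery sequence plus Theorem~\ref{thm:JNlimitJ} for the limsup inequality, Lemma~\ref{lem:limitQstar} and the Wasserstein-continuity argument of Theorem~\ref{thm:JNlimitJ} for the loss terms, and the Skorokhod/weak-$L^2$/distributional identification of $(\theta^*)'$ (as in Theorem~\ref{thm:reaxed-sol}) for lower semicontinuity of the derivative regulariser. The only cosmetic deviation is that you handle the $\|\theta\|_{{\cal L}_m^2}^2$ term via convergence of second moments from $\WINFTY$, whereas the paper invokes dominated convergence after Skorokhod (using compactness of $\Theta$); both are valid.
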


\begin{proof}
Let $Q\in\mathcal{Q}(\nu)$ and take $\bar{Q}_N=Q$ for all $N\geq1$. Then, it follows from Proposition \ref{prop:JNlimitJ} that $\lim_{N\to\infty}J_N(\bar{Q}_N)=J(Q)$. Therefore, $(\bar{Q}_N)_{N=1}^{\infty}\subset{\cal Q}(\nu)$ is a $\Gamma$-realising sequence. Hence, the $\limsup$~inequality in Definition \ref{def:Gammaconvergence} holds.

It remains to prove the $\liminf$~inequality. Let $(Q_N)_{N=1}^{\infty},Q\subset{\cal Q}(\nu)$ satisfy $\lim_{N\to\infty}{\WINFTY}(Q_N,Q)=0$. Then, it follows from Lemma \ref{lem:limitQstar} that $\Qx^N=Q_N\circ(\mu^{(N)}(0),\theta_N,\mu^N)^{-1}$ converges to $Q\circ(I_*,\theta,\mu_*)^{-1}$ in ${\cal P}_2({\cal P}_2(E)\times{\cal C}_m\times\hat{S})$. The exact expression of $(I_*,\mu_*)$ is given in Lemma \ref{lem:limitQstar}. Recall the expression of ${\Qx}^N_{\mu}$ given in \eqref{eq:muQxN}. Then ${\Qx}^N_{\mu}$ converges to $\hat{\Qx}^*:=Q\circ\mu_*^{-1}$ in ${\cal P}_2(\hat{S})$, as $N\to\infty$. Using similar arguments to those in the proof of Proposition~\ref{prop:JNlimitJ}, this leads to
\begin{align}
&\lim_{N\to\infty}\int_{\hat{S}}\lc\vartheta(T),L\rc{\Qx}^N_{\mu}(d\vartheta)+\lim_{N\to\infty}\frac{\beta}{\alpha}\int_0^T\left(\int_{\hat{S}}\lc\vartheta(t),L\rc {\Qx}^N_{\mu}(d\vartheta)\right)dt\notag\\
&\qquad\qquad=\int_{\hat{S}}\lc \vartheta(T),L\rc\hat{\Qx}^*(d\vartheta)+\frac{\beta}{\alpha}\int_0^T\left(\int_{\hat{S}}\lc\vartheta(t),L\rc \hat{\Qx}^*(d\vartheta)\right)dt.
\label{eq:gamma1_}
\end{align}

We next state and prove the following claim:
\begin{align}\label{eq:fatou}
\ell:=\liminf_{N\to\infty}{\Ex}^{Q_N}\left[\left\|\theta_N\right\|_{{\cal L}_m^2}^2+\left\|\theta_N'\right\|_{{\cal L}_m^2}^2\right]\geq{\Ex}^{Q}\left[\left\|\theta\right\|_{{\cal L}_m^2}^2+\left\|\theta'\right\|_{{\cal L}_m^2}^2\right].
\end{align}
If $\ell=+\infty$, then \eqref{eq:fatou} trivially holds. If $\ell<+\infty$ (note that $\ell\geq0$ by the way it is defined), then passing to a subsequence (call it $Q_N$ again), we may assume that
\begin{align}\label{eq:assuminflim}
\lim_{N\to\infty}{\Ex}^{Q_N}\left[\left\|\theta_N\right\|_{{\cal L}_m^2}^2+\left\|\theta_N'\right\|_{{\cal L}_m^2}^2\right]=\ell<+\infty.
\end{align}
Note that $Q_N\circ\theta_N^{-1}\Rightarrow Q\circ\theta^{-1}$ as $N\to\infty$. Using Skorokhod's representation theorem, there exists a probability space $(\Omega^*,\F^*,\Px^*)$, a sequence of ${\cal C}_m$-valued r.v.s $(\theta_N^*)_{N=1}^{\infty}$, $\theta^*$ such that $\Px^*\circ(\theta_N^*)^{-1}= Q\circ\theta_N^{-1}$, $\Px^*\circ(\theta^*)^{-1}= Q\circ\theta^{-1}$, and as $N\to\infty$, $\theta_N^*\to\theta^*$ in ${\cal C}_m$, $\Px^*$-a.s. It follows from the dominated convergence theorem that
\begin{align}\label{eq:gamma2}
\lim_{N\to\infty}\Ex^{Q_N}\left[\left\|\theta_N\right\|_{{\cal L}_m^2}^2\right]&=\lim_{N\to\infty}\Ex^{*}\left[\left\|\theta_N^*\right\|_{{\cal L}_m^2}^2\right]=\Ex^{*}\left[\left\|\theta^*\right\|_{{\cal L}_m^2}^2\right]=\Ex^{Q}\left[\left\|\theta\right\|_{{\cal L}_m^2}^2\right].
\end{align}

We next prove that $({\theta_N^*}')_{N=1}^{\infty}$ is bounded in $L^2((0,T)\times\Omega^*;dt\otimes d\Px^*)$. This can be derived using similar arguments to those employed to derive~\eqref{eq:thetaderifatou}. 
For completeness, we provide the precise mathematical details.  For any $\phi\in{\cal D}$, define ${\cal T}_N(\phi):=(\theta_N^*,\phi')$. Let $(\phi_l)_{l=1}^{\infty}\subset{\cal D}$ be dense in ${\cal L}_m^2$. Then, for each $N\geq1$,
\begin{align}
\Ex^*\left[\sup_{l\geq1}\frac{\left|(\theta_N^*,\phi_l')\right|}{\left\|\phi_l\right\|_{{\cal L}_m^2}}\right]
=\Ex^{Q_N}\left[\sup_{l\geq1}\frac{\left|(\theta_N',\phi_l)\right|}{\left\|\phi_l\right\|_{{\cal L}_m^2}}\right]=\Ex^{Q_N}\left[\left\|\theta_N'\right\|_{{\cal L}_m^2}\right]<+\infty.
\end{align}
By Hahn-Banach theorem and Riesz representation theorem, there exists an ${\cal L}_m^2$-valued r.v. $\hat{\theta}_N^*$ such that ${\cal T}_N(\phi)=(\hat{\theta}_N^*,\phi)$ for all $\phi\in {\cal L}_m^2$, $\Px^*$-a.s. In particular, ${\cal T}_N(\phi)=(\theta_N^*,\phi')=(\hat{\theta}_N^*,\phi)$ for any $\phi\in{\cal D}$. This yields that $\theta_N^*\in{\cal H}_m^1$, $\Px^*$-a.s. It then follows from \eqref{eq:assuminflim} that
\begin{align}\label{eq:thetastarL2norm}
\sup_{N\geq1}\Ex^*\left[\left\|{\theta_N^*}'\right\|_{{\cal L}_m^2}\right]&=\sup_{N\geq1}\Ex^*\left[\sup_{l\geq1}\frac{|(\theta_N^*,\phi_l')|}{\|\phi_l\|_{{\cal L}_m^2}}\right]
=\sup_{N\geq1}\Ex^{Q_N}\left[\sup_{l\geq1}\frac{|(\theta_N,\phi_l')|}{\|\phi_l\|_{{\cal L}_m^2}}\right]=\sup_{N\geq1}\Ex^{Q_N}\left[\left\|\theta_N'\right\|_{{\cal L}_m^2}\right]<\infty.
\end{align}
This shows that $({\theta_N^*}')_{N=1}^{\infty}$ is bounded in $L^2((0,T)\times\Omega^*;dt\otimes d\Px^*)$, and hence ${\theta_N^*}'$ (up to a subsequence) converges weakly to some $\hat{\theta}^*\in L^2((0,T)\times\Omega^*;dt\otimes d\Px^*)$ as $N\to\infty$. As shown in Proposition~\ref{prop:reaxed-sol}, for any $\phi\in{\cal D}$ and $H\in L^\infty(\Omega^*;\Px^*)$, by the weak convergence property
\begin{align}
\Ex^*\left[(\theta^*,\phi')H\right]=\lim_{N\rightarrow\infty}\Ex^*\left[ (\theta^*_N,\phi')H\right]=-\lim_{N\rightarrow\infty}\Ex^*\left[({\theta^*_N}',\phi)H\right]=-\Ex^*\big[(\hat{\theta}^*,\phi)H\big].
\end{align}
Thus, for any $\phi\in{\cal D}$, $(\theta^*,\phi')=-(\hat{\theta}^*,\phi)$, $\Px^*$-a.s.. The separability of ${\cal D}$ implies that $\Px^*$-a.s., $(\theta^*,\phi')=-(\hat{\theta}^*,\phi)$, for all $\phi\in{\cal D}$, i.e., ${\theta^*}'=\hat{\theta}^*$, $\Px^*$-a.s.. Similarly to the derivation of the estimates~\eqref{eq:thetaderifatou} and \eqref{eq:thetastarL2norm}, we obtain that
\begin{align}\label{eq:gamma3}
\liminf_{N\to\infty}\Ex^{Q_N}\left[\left\|\theta'_N\right\|_{{\cal L}_m^2}^2\right]=\liminf_{N\to\infty}\Ex^*\left[\left\|{\theta^*_N}'\right\|_{{\cal L}_m^2}^2\right]\geq\Ex^*\left[\left\|{\theta^*}'\right\|_{{\cal L}_m^2}^2\right]=\Ex^Q\left[\left\|\theta'\right\|_{{\cal L}_m^2}^2\right].
\end{align}
Thus, the proof of~\eqref{eq:fatou} follows immediately from \eqref{eq:gamma2} and \eqref{eq:gamma3}. Finally, note that
\begin{align}
J_N(Q_N)=\int_{\hat{S}}\lc\vartheta(T),L\rc{\Qx}^N_{\mu}(d\vartheta)+\frac{\beta}{\alpha}\int_0^T\left(\int_{\hat{S}}\lc\vartheta(t),L\rc {\Qx}^N_{\mu}(d\vartheta)\right)dt+{\Ex}^{Q_N}\left[\left\|\theta_N\right\|_{{\cal L}_m^2}^2+\left\|\theta_N'\right\|_{{\cal L}_m^2}^2\right].
\end{align}
Then, the $\liminf$~inequality in Definition \ref{def:Gammaconvergence} follows from \eqref{eq:control25JQ}, \eqref{eq:gamma1_}, and \eqref{eq:fatou}.
\end{proof}

To conclude the proof of Theorem~\ref{thm:miniconver}, it remains to establish the relatively compactness of the minimizing sequence of the sampled optimization problem~\eqref{eq:relaxed-controlQ}. As shown in Proposition~\ref{prop:reaxed-sol} of Section~\ref{sec:existence-sol-samploptim}, for each $N\geq1$  there exists a relaxed solution $Q_N\in{\cal Q}(\nu)$ such that $J_N(Q_N)=\inf_{Q\in{\cal Q}(\nu)}J_N(Q)$. We will prove below that such a sequence $(Q_N)_{N=1}^{\infty}$ is precompact under the quadratic Wasserstein distance $\WINFTY$.
\begin{lemma}\label{lem:precompactQN}
Let {\Afg} and {\Atheta} hold. Then, the above minimizing sequence $(Q_N)_{N=1}^{\infty}\subset{\cal Q}(\nu)$ is precompact in $\WINFTY$.
\end{lemma}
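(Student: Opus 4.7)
The plan is to adapt the tightness argument from the proof of Theorem~\ref{thm:reaxed-sol}, exploiting additionally the compactness of the control state space $\Theta$.

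First, I would produce a uniform upper bound on the optimal values. Fix any $\theta_0\in\Theta$ and let $Q^0\in{\cal Q}(\nu)$ be the measure on $\Omega_{\infty}$ under which $\zeta\sim\nu$, $W=(W^i)_{i=1}^{\infty}$ is a family of independent Wiener processes and $\theta\equiv\theta_0$. By optimality $J_N(Q_N)\le J_N(Q^0)$, and standard $L^2$-estimates for the SDE~\eqref{eq:modeli} (using \Afg{}, $\zeta^i\in\Xi_K$ and Lemma~\ref{lem:estimateXN}) give $\sup_{N\ge1}J_N(Q^0)<+\infty$. Combined with the obvious lower bound $J_N(Q_N)\ge(\lambda_1\wedge\lambda_2)\,\Ex^{Q_N}\bigl[\int_0^T\{|\theta_N(t)|^2+|\theta_N'(t)|^2\}dt\bigr]$, this yields the key uniform estimate $\sup_{N\ge1}\Ex^{Q_N}[\|\theta_N\|_{{\cal H}_m^1}^2]<+\infty$.

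Next, I would establish tightness of $(Q_N)_{N\ge1}$ in ${\cal P}(\hat\Omega_{\infty})$, where $\hat\Omega_{\infty}:=\Omega_\infty^0\times{\cal C}_p^{\mathbb N}\times{\cal C}_m$. Assumption \Atheta{} gives the pointwise bound $\|\theta_N\|_T\le C_\Theta:=\sup_{\theta\in\Theta}|\theta|<\infty$, $Q_N$-a.s., and Jensen's inequality applied to the uniform $L^2$-bound on $\theta_N'$ gives the modulus-of-continuity estimate $\Ex^{Q_N}[\sup_{|t-s|\le\delta}|\theta_N(t)-\theta_N(s)|^2]\le C\delta$. Chebyshev's inequality combined with Arzel\`a--Ascoli, exactly as in \eqref{eq:Azera-Ascoli1}--\eqref{eq:supthetat}, then yields tightness of $(Q_N\circ\theta_N^{-1})_{N\ge1}$ in ${\cal P}({\cal C}_m)$. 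Since the marginals $Q_N\circ\zeta_N^{-1}=\nu$ and $Q_N\circ W_N^{-1}$ are independent of $N$, they are trivially tight on the Polish spaces $(\Omega_\infty^0,d_1)$ and $({\cal C}_p^{\mathbb N},d_2)$, so $(Q_N)_{N\ge1}$ is tight in ${\cal P}(\hat\Omega_{\infty})$. Prokhorov's theorem and Skorokhod's representation then produce pointwise-convergent representatives on an auxiliary space $(\Omega^*,\F^*,\Px^*)$, converging to some limit $Q^*$.

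The two remaining tasks are (a) showing $Q^*\in{\cal Q}(\nu)$, and (b) upgrading weak convergence to $\WINFTY$-convergence. For (a), the Hahn--Banach plus Riesz-representation argument from the proof of Theorem~\ref{thm:reaxed-sol} carries over verbatim: it upgrades the merely continuous limit $\theta^*$ into an element of ${\cal H}_m^1$ with square-integrable weak derivative (using $\sup_N\Ex^{Q_N}[\|\theta_N'\|_{{\cal L}_m^2}^2]<\infty$), while $Q^*\circ\zeta^{-1}=\nu$, the Wiener property of $W$ under $Q^*$, and $\theta^*\in\Theta$ $dt\otimes dQ^*$-a.e.\ pass to the weak limit routinely. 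For (b), the crucial observation is that the metric \eqref{eq:dinfty} is \emph{uniformly bounded} on the support of every $Q\in{\cal Q}(\nu)$: by construction $d_1,d_2\le1$, and $d_3(\vartheta,0)=\|\vartheta\|_T\le C_\Theta$ whenever the continuous representative of $\vartheta$ takes values in the compact set $\Theta$. Hence uniform integrability of the second moments at infinity is automatic, and Theorem~7.12 in \cite{Villani2003} upgrades weak convergence $Q_{N_k}\Rightarrow Q^*$ to $\WINFTY$-convergence. The main technical obstacle is identifying $\theta^*$ as an element of ${\cal H}_m^1$ rather than just of ${\cal C}_m$; since this is precisely the difficulty already resolved in Theorem~\ref{thm:reaxed-sol}, it suffices to invoke that argument instead of redoing it.
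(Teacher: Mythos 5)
Your proposal is correct and follows essentially the same route as the paper's proof: a uniform bound on $J_N(Q_N)$ via a reference control, hence a uniform ${\cal H}_m^1$-bound on $\theta_N$; tightness on $\Omega_\infty^0\times{\cal C}_p^{\N}\times{\cal C}_m$ via Arzel\`a--Ascoli; the Hahn--Banach/Riesz argument from Theorem~\ref{thm:reaxed-sol} to land the limit in ${\cal Q}(\nu)$; and the Villani characterization of ${\cal W}_2$-convergence as weak convergence plus uniform integrability of second moments. The only (harmless) difference is in the last step, where you get uniform integrability for free from the boundedness of $d$ on the supports of relaxed controls ($d_1,d_2\le1$ and $\Theta$ compact), while the paper argues it via the identical laws of $(\zeta_N^*,W_N^*)$ together with {\Atheta} and Vitali's theorem.
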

\begin{proof}
	Lemma~\ref{lem:estimateXN} implies that $\sup_{N\geq1}J_N(Q)\leq \ell$ for some $\ell>0$. Together with Proposition~\ref{prop:reaxed-sol}, this implies the existence of a constant $\ell>0$ and some $Q\in{\cal Q}(\nu)$ such that
	\begin{align}
	J_N(Q_N)=\inf_{Q\in{\cal Q}(\nu)}J_N(Q)\leq \sup_{N\geq1}J_N(Q)\leq \ell,\quad\forall\ N\geq1.
	\end{align}
	Therefore, it follows that
	\begin{align}
	\sup_{N\geq1}\Ex^{Q_N}\left[\left\|\theta_N\right\|^2_{{\cal L}_m^2}+\left\|\theta_N'\right\|^2_{{\cal L}_m^2}\right]\leq \sup_{N\geq1}J_N(Q_N)\leq \ell.
	\end{align}
	Let ${\cal X}_N=(\zeta_N,W_N,\theta_N)$ be the canonical process corresponding to $Q_N$. Using similar arguments to those in the proof of Proposition~\ref{prop:reaxed-sol}, $(Q_N\circ\theta^{-1}_N)_{N=1}^{\infty}$ is tight on ${\cal C}_m$, and moreover $(Q_N\circ{\cal X}^{-1}_N)_{N=1}^{\infty}$ is tight on $\Omega_\infty$ because $(\Omega_{\infty},d)$ is Polish. Then, by Prokhov's theorem, there exists a $Q^*\in{\cal P}(\Omega_{\infty})$ such that the minimizing sequence $(Q_N)_{N=1}^{\infty}$, up to a subsequence, converges to $Q^*$ under weak topology. Using Skorokhod representation theorem, there exists a probability space $(\Omega^*,\F^*,\Px^*)$, ${\cal X}_N^*=(\zeta^*_N,W_N^*,\theta_N^*)$ with ${\cal X}_N^*\overset{d}{=}{\cal X}_N$, and ${\cal X}^*=(\zeta^*,W^*,\theta^*)$ with $\Px^*\circ({\cal X}^*)^{-1}=Q^*$ such that, $\Px^*$-a.s., as $N\to\infty$,
	\begin{align}\label{eq:Pstarconver3}
	\zeta_N^*&\to \zeta^*\ {\rm in}\ \Xi_K^{\N};\quad W_N^*\to W^*\ {\rm in}\ {\cal C}_p^{\N};\quad
	\theta_N^*\to \theta^*\ {\rm in}\ {\cal C}_m.
	\end{align}
	Further, we obtain that {\bf(i)}: for $N\geq1$, $\theta_N^*\in{\cal H}_m^1$ $\Px^*$-a.s. and hence $\Px^*\circ({\cal X}_N^{*})^{-1}\in{\cal P}(\Omega_{\infty})$; {\bf(ii)}: $Q^*=\Px^*\circ({\cal X}^{*})^{-1}\in{\cal P}(\Omega_{\infty})$, and hence $Q^*\in{\cal Q}(\nu)$. To prove that $Q_N$ converges to $Q^*$ as $N\to\infty$ in $\WINFTY$, using Definition 6.8 and Theorem 6.9 in \cite{Villani2009}, it suffices to prove that
	\begin{align}\label{eq:P1Omegainfty}
	&\lim_{N\to\infty}\int_{\Omega_{\infty}}d_{\infty}^2((\gamma,w,\vartheta),(\hat{\gamma},\hat{w},\hat{\vartheta}))Q_N(d(\gamma,w,\vartheta))=\int_{\Omega_{\infty}}
	d_{\infty}^2((\gamma,w,\vartheta),(\hat{\gamma},\hat{w},\hat{\vartheta}))Q^*(d(\gamma,w,\vartheta)),
	\end{align}
	for some $(\hat{\gamma},\hat{w},\hat{\vartheta})\in\Omega_{\infty}$. First, note that $Q_N=\Px^*\circ({\cal X}_N^*)^{-1}$. Then, for all $N\geq1$, it holds that
	\begin{align}
	\int_{\Omega_{\infty}}d_{\infty}^2((\gamma,w,\vartheta),(\hat{\gamma},\hat{w},\hat{\vartheta}))Q_N(d(\gamma,w,\vartheta))
	=\Ex^*\left[\left|d_1(\zeta_N^*,\hat{\gamma})+d_2(W_N^*,\hat{w})+d_3(\theta_N^*,\hat{\vartheta})\right|^2\right].
	\end{align}
	Observing that, for any $N\geq1$, $(\zeta^*_N,W^*_N)$ and $(\zeta^*,W^*)$ are identically distributed by (i) and (ii) of Definition \ref{def:relax-sol}, it follows that $(d^2_1(\zeta_N^*,\hat{\gamma})+d^2_2(W_N^*,\hat{w}))_{N=1}^{\infty}$ is uniformly integrable. Using {\Atheta}, we deduce that $(d^2_3(\theta_N^*,\hat{\vartheta}))_{N=1}^{\infty}$, is uniformly integrable. It then follows from \eqref{eq:Pstarconver3} and Vitali's convergence theorem that
	\begin{align}
	&\lim_{N\to\infty}\int_{\Omega_{\infty}}d_{\infty}^2((\gamma,w,\vartheta),(\hat{\gamma},\hat{w},\hat{\vartheta}))Q_N(d(\gamma,w,\vartheta))
	=\lim_{N\to\infty}\Ex^*\left[\left|d_1(\zeta_N^*,\hat{\gamma})+d_2(W_N^*,\hat{w})+d_3(\theta_N^*,\hat{\vartheta})\right|^2\right]\nonumber\\
	&\qquad=\Ex^*\left[\left|d_1(\zeta^*,\hat{\gamma})+d_2(W^*,\hat{w})+d_3(\theta^*,\hat{\vartheta})\right|^2\right]
	=\int_{\Omega_{\infty}}d_{\infty}^2((\gamma,w,\vartheta),(\hat{\gamma},\hat{w},\hat{\vartheta}))Q^*(d(\gamma,w,\vartheta)),
	\end{align}
	which yields \eqref{eq:P1Omegainfty}. This completes the proof of the lemma.
\end{proof}

\begin{remark}\label{eq:lambda2eqn0}
If $\lambda_2=0$ in \eqref{eq:squareLR}, one can apply a similar argument to the one used in the case $\lambda_2>0$ to characterize the limiting objective $J(Q)=\lim_{N\to\infty} J_N(Q)$ and the Gamma convergence $J_N\overset{\Gamma}{\to}J$ as $N\to\infty$. However, without derivative-based regularization, one needs a different argument to esablish the convergence of minimizers as the sample size grows to infinity. See Appendix~\ref{app:proof1} for more details, {where we require the forcing function $f$ specified in Section~\ref{sec:concludes} to satisfy the condition~\eqref{eq:dynamf}  }.		
\end{remark}

\section{Connection to Deep ResNet} \label{sec:appresnet}
In this section, we demonstrate how the proposed framework can mimic the training process of {ResNet}s. Section~\ref{discretecont}  illustrates how a stylized unregularized ResNet with infinite depth can be modeled by a continuous time deterministic dynamical system. Section~\ref{sec:regularizedeep} shows how, extending the deterministic to a stochastic system of the form in Eq.~\eqref{eq:modeli}, one can model regularization during the training process. Section~\ref{sec:losses} discusses how the optimization criterion considered in the previous sections is related to regularized loss functions of neural networks.

\subsection{Representation of Deep {ResNet} as a Dynamical System}\label{discretecont}

We begin by considering a simplified version of the $n$-layered ResNet architecture, as in \cite{ThGe2018}. For $l=0,1,\ldots,n-1$, let $d$ be the number of neurons in each layer, and $X^{(n)}(l)\in\R^d$ the states of neurons in layer $l$. We use $w^{(n)}(l)\in\R^{d\times d}$ to denote the matrix of weights which determine how neurons in layer $l$ activate neurons in layer $l+1$, and $b^{(n)}(l) \in \R^d$ to denote the  vector of biases at layer $l$. The feed-forward propagation in the $n$-layer ResNet model can be represented by the difference equation:
\begin{align}\label{eq:model0_}
	X^{(n)}(l+1)= X^{(n)}(l) + \frac{1}{n}\sigma\left(l,w^{(n)}(l)X^{(n)}(l)+b^{(n)}(l)\right),\quad l=0,1,\ldots,n-1,
\end{align}
where $\frac{1}{n}$ is the scaling factor, and $\sigma(l,x)=(\sigma_1(l,x_1),\ldots,\sigma_d(l,x_d))^{\top}$ is the activation function which effectively turns neurons ``on'' or ``off'' at layer $l+1$ based on the value of the input $x = (x_1,\ldots,x_d)$ at layer $l$. In existing implementations, the activation function is chosen to be a smooth approximation of a rectified linear activation function or a sigmoid function (see also \cite{Haykin2009}). Eq.~\eqref{eq:model0_} highlights the residual property of the network: the endogenous input at layer $l+1$ consists of the non-transformed input $X^{(n)}(l)$ from layer $l$, plus a nonlinear transformation of $X^{(n)}(l)$. The term $X^{(n)}(l)$ represents information from the previous layer ``skipping the processing associated with the layer $l$'', and being transmitted to layer $l+1$ without being transformed. Let $\theta^{(n)}(l):=(w^{(n)}(l),b^{(n)}(l))\in\R^{d\times d}\times\R^d$, and write
\begin{align}
	g(l,\theta^{(n)}(l),x):=\sigma\left(l,w^{(n)}(l)x+b^{(n)}(l)\right).
\end{align}
As observed in prior studies (e.g. \cite{E17} and \cite{Lu}), one can turn the setting described above into an explicit Euler characterization of the ODE, yielding
\begin{align}\label{eq:model11}
	dX^{\theta}(t)= g(t, \theta(t), X^{\theta}(t))dt,\qquad t\in(0,T],
\end{align}
where $X^{\theta}(t) \in \R^d$ is the state at layer $t$ of the feed-forward transformation triggered by the the training inputs, $t\in[0,T]$ represents the depth of the network, and the time step $dt$ corresponds to the scaling factor $\frac{1}{n}$. The initial state $X^{\theta}(0)=X(0)$ collects the training inputs.~
Above, we have used the superscript $\theta$ to emphasize the dependence of output neurons on the weight matrix and, for a finite $T$, $X^{\theta}$ and $\theta$ denote real valued functions on $[0,T]$. The above argument has been used to justify neural network architectures arising from discretization of differential equations (e.g. \cite{chenetal2018} and \cite{ThGe2018}).
We will treat~\eqref{eq:model11} as a differential equation, i.e., assume an infinitesimal time step. This effectively corresponds to the limit of an infinitely deep residual network, i.e., consisting of infinitely many layers.

\subsection{Regularized Deep {ResNet}}\label{sec:regularizedeep}

We extend the dynamical system representation of a deep {ResNet} sketched in the previous section to incorporate regularization during the training process. Specifically, we allow for (i) additive noise injection into the deterministic hidden units to prevent overfitting, and (ii) batch normalization in each layer to accelerate convergence of the training algorithm. This turns the continuous time dynamics given by~\eqref{eq:model11} into a stochastic dynamical system of the form in Eq.~\eqref{eq:modeli}.

We next illustrate how the proposed framework can be used to describe continuous layer idealizations of deep ResNets at different levels of complexity.

\begin{itemize}
	\item The vector function $f$ in Eq.~\eqref{eq:modeli}
	 depends only on the input variables $(t,\theta,x)$, and  $\varepsilon^i=0$. This yields a feed forward propagation in a deep network, where the randomness only comes from the initial input $X^i(0)$. Such a dynamical system specification recovers the framework of \cite{EHanLi18}, which has also been specified in Eq.~\eqref{eq:model11}.

	\item {The vector function} $f$ depends on all its inputs {$(t,\theta,x,\eta)$}, and $\varepsilon^i=0$. We next illustrate how this specification incorporates batch normalization (see also Algorithm 1 in \cite{IoffeSzegedy2015}) through the batch function $\rho:\R^d\to\R^{2}$. For the purpose of illustration, we set $d=1$. The batch normalization transform applied to $X^{\theta,i}(t)$ over a batch of size $N$ is given by
	\begin{align}
		\gamma \frac{X^{\theta,i}(t)-\frac{1}{N}\sum_{j=1}^NX^{\theta,j}(t)}{\sqrt{\frac{1}{N}\sum_{i=1}^N(X^{\theta,i}(t))^2-(\sum_{j=1}^NX^{\theta,j}(t))^2+\iota}}+\upsilon,
	\end{align}
	where $\gamma,\upsilon\in\R$ are, respectively, the scale and shift factors and $\iota>0$ is a constant added to the batch variance to avoid numerical instability. Take the (vector) batch function $\rho(x)=(\rho_1(x),\rho_2(x))=(x,x^2)$. Then, we may rewrite the above batch normalization as:
	\begin{align}
		\frac{\gamma\left(X^{\theta,i}(t)-\frac{1}{N}\sum_{j=1}^N\rho_1(X^{\theta,j}(t))\right)}{\sqrt{\frac{1}{N}\sum_{j=1}^N\rho_2(X^{\theta,j}(t))
				-\left(\frac{1}{N}\sum_{j=1}^N\rho_1(X^{\theta,j}(t))\right)^2+\iota}}+\upsilon.
	\end{align}
	Because of such normalization, the state dynamics of the neural network becomes of the mean-field type. For $(t,\theta,x,\eta)=(t,(w,b),x,(\eta_1,\eta_2))\in[0,T]\times\R^2\times\R\times\R^2$, this yields
	\begin{align}\label{example0}
		f\left(t, (w,b), x,(\eta_1,\eta_2)\right) = S\left(t,w\frac{\gamma(x-\eta_1)}{\sqrt{\eta_2-\eta_1^2+\iota}}+w\upsilon+b\right),
	\end{align}
	where $x\to S(t,x)=\frac{1}{1+e^{-x}}$ is the logistic activation function at layer $t$. Observe that, differently from \cite{EHanLi18}, the probability law of the state process $(X^{\theta,1},\ldots,X^{\theta,N})$ enters explicitly into the state forward dynamics.

\begin{remark}\label{rem:thosimga}
	The batch function  $\rho_2(x)=|x|^2$ introduced in Algorithm 1 of \cite{IoffeSzegedy2015} is only locally Lipschitz. For practical matters, one can well approximate $\rho_2$ with the truncation function $\rho_2^{(M)}(x):=x^2{\bf1}_{\{|x|\leq M\}}+M^2{\bf1}_{\{|x|>M\}}$ where $M>0$ is sufficiently large. We can then consider an approximating system to \eqref{eq:modeli}, in which $\rho_2$ is replaced by $\rho_2^{(M)}$. Such a system is well-posed. Denote by $X^{\theta,i,M}=(X^{\theta,i,M}(t))_{t\in[0,T]}$ the solution of this approximating system. First, in terms of \eqref{example0}, for any $\theta\in\Theta$ (a compact subset of $\R^m$) and $x^i=(x_1^i,\ldots,x_d^i)\in\R^d$ ($i=1,\ldots,N$), we have that $a_i(t,(x^1,\ldots,x^N)):=f(t,\theta, x^i,(\frac1N\sum_{j=1}^N\rho_1(x^j),\frac1N\sum_{j=1}^N\rho_2(x^j)))$ satisfies the linear growth condition uniformly in $t\in[0,T]$. Since the locally Lipschitz continuity implies strong uniqueness, we have $X^{\theta,i}(t)=X^{\theta,i,M}(t)$ for $t\in[0,\tau_{M}^i]$ a.s., where $\tau^i_{M}:=\inf\{t\geq0;~|X^{\theta,i}(t)|\geq M\}$. {Because $a_i(t,\cdot)$ has  linear growth for any $t\in[0,T]$, it follows from the Gronwall's inequality that $\Ex[\sup_{t\in[0,T]}|X^{\theta,i}(t)|]<+\infty$. Then, for any $t\in[0,T]$, $\lim_{M\to+\infty}\Px(\tau^i_M\leq t)=\lim_{M\to+\infty}\Px(\sup_{s\in[0,t]}|X^{\theta,i}(s)|\geq M)\leq\lim_{M\to+\infty}\frac1M\Ex [\sup_{s\in[0,T]}|X^{\theta,i}(s)|]=0$. This implies that, for any $t\in[0,T]$, $\Px(\bigcap_{N=1}^{+\infty}\bigcup_{M=N}^{+\infty}\{\tau^i_M\leq t\})=0$. Since $\tau_M^i$ is increasing in $M>0$, we deduce that $\lim_{M\to+\infty}\tau^i_{M}=+\infty$, a.s..} Thus $|X^{\theta,i,M}(t)-X^{\theta,i}(t)|\to0$, as $M\to\infty$, a.s.. Hence, $\lim_{M\to+\infty}X^{\theta,i,M}(t)$ for $t\in[0,T]$ is the unique solution of \eqref{eq:modeli}.
\end{remark}	
	
	\item {The vector function} $f$ {depends on all its inputs $(t,\theta,x,\eta)$,} and $\varepsilon^i \neq 0$. This is the most general setting, in which a noise process modeled through the Brownian motion {$W(t)=(W^1(t),\ldots,W^N(t))_{t\in[0,T]}$} is added along the trajectory of the deep neural network. The weight function  $\theta=(\theta(t))_{t\in[0,T]}$ then becomes a $\Fx$-adapted control strategy, which will be chosen to minimize the neural network loss criterion. One can thus think of an arbitrary layer as receiving samples from a distribution that is determined by the layer below. Such a distribution changes during the course of training, making any layer except the first one responsible not only for learning a good representation, but also for adapting to a changing input distribution.

\qquad We sample noise from a zero-mean Gaussian distribution so that the noise transition function is unbiased. This means that, conditional on the input $X^{\theta,i}(t)$, the output $X^{\theta,i}(t+dt)$ at layer $t+dt$ coincides on average with the output of a deterministic ResNet. In other words, the noise-injected transition function preserves, on average, the transition function of the underlying deterministic network. This type of unbiased regularization has been shown to perform favorably compared to classical regularization techniques such as dropout, in the context of recurrent neural networks (see \cite{Blei}).	One can also regard the noise injected outputs of hidden units as stochastic activations, or equivalently, random selections of hidden units in a layer, consistently with \cite{NIPSStochAct}. A related study by \cite{RakinHeFan2018} considers noise injection as a way to make deep ResNets robust against adversarial attacks. They propose Parametric-Noise-Injection (PNI), i.e., they inject noise to different components or locations within the network.

\begin{remark}
It is possible to account for a multiplicative noise injection regularization procedure. Such regularization technique, commonly referred to as the dropout, has been successfully used to mitigate the overfitting problem of overparameterized networks (see, e.g. \cite{SrivastavaMLR} and \cite{Ba2013}). In our framework, this means replacing the term $\varepsilon^idW^i(t)$ in \eqref{eq:modeli} with $\varepsilon^i\sigma(t,\theta,X^{\theta,i}(t))dW^i(t)$, where $\sigma(t,\theta,x):[0,T]\times\Theta\times\R^d\to\R_+$ is the common multiplicative noise scale function. The main results developed in previous sections still hold if the following standard assumption on $\sigma$ is imposed:
\begin{itemize}
		\item[\Asi]
		\begin{enumerate}
			\item[{\rm(i)}] $\sigma(t,\theta,x)$ is continuous in $(t,\theta,x)$ and is twice continuously differentiable in $x$;
			\item[{\rm(ii)}] $\sigma$ satisfies the following Lipschitz condition: for all $\theta_1,\theta_2\in\Theta$ and $x,y\in\R^d$,
			\begin{align*}
				|\sigma(t,\theta_1,x)-\sigma(t,\theta_2,y)|\leq [\sigma]_{\rm Lip}[|\theta_1-\theta_2|+|x-y|],
			\end{align*}
		\end{enumerate}
		where $[\sigma]_{{\rm Lip}}$ is the Lipschitz coefficient of $\sigma$, which is independent of the layer index $t$.
\end{itemize}
We provide the corresponding proofs and supporting arguments in the Online Appendix of this paper.
\end{remark}

\end{itemize}

\subsection{Loss Function and Regularization} \label{sec:losses}
We can view the samples $(\zeta^i)_{i=1}^N$ in Eq.~\eqref{eq:zetai} as $N$ i.i.d. {\it training samples} of a neural network. The $i$-th {\it training sample} consists of the training input {$X^i(0)$}, and the label part $Y^i(0)$.

In a neural network, the loss function quantifies the difference between the expected outcome $Y^i(0)$ and the endogenous output $X^{\theta,i}(T)$ produced by the learning model, for all training samples $\zeta^i$, $i=1,\ldots, N$. A simple and commonly used loss function is the squared-error loss specified by Eq.~\eqref{eq:squareLR}, which makes it computationally tractable to derive the gradients used to update the weights. It is also common to consider a regularizer in the objective function to penalize for model complexity. The specification in Eq.~\eqref{eq:squareLR} is flexible enough to allow for two types of regularization. The $L^2$-regularization is achieved by setting $\lambda_2 = 0$, and encourages the weight values {\it towards} zero. The $L^1$-regularization is instead attained with $\lambda_1=0$, and encourages the weight values {\it to be} zero. Intuitively, smaller weights reduce the impact of hidden neurons, so that they become neglectable and the overall complexity of the neural network gets reduced.

{Regularizers in the objective function of the neural network have been extensively studied.  \cite{HasanRoy-Chowdhury15} consider the case where $\lambda_1>0$ and $\lambda_2=\beta=0$. If $\lambda_1=\lambda_2>0$, the control problem \eqref{eq:control2} includes an ${\cal H}_m^1$-regularizer, as in the studies of \cite{HaberRuthotto2018}, \cite{ThGe2018}, and \cite{Oberman2018} for the training of deep neural networks.} For instance, \cite{HaberRuthotto2018} consider a smoothness regularization on the weights $w$ given by $R(w)=\frac{1}{2}\|{\mathcal L}{\bf w}\|^2$, where ${\mathcal L}$ is a discretized differential operator. In our continuous time framework, this regularization can be understood as a discrete version of the time derivative of weights. They show that this regularization biases the learning process towards smooth time dynamics, and improves  generalization of a ResNet. \cite{Oberman2018} consider a modified loss function with Lipschitz regularization.  They show theoretically that the regularized model generalizes, and empirically that the regularization improves adversarial robustness for deep neural networks (DNN).

\section{Concluding Remarks and Future Directions}\label{sec:concludes}

We have studied a class of optimal sampled (relaxed) control problems of the mean-field type, and then demonstrated how they can be used to model the optimal training of a stlyzed continuous layer deep ResNet. Building upon the existence of optimal relaxed controls for  training sets of finite size, we have proven that the minimizer of the sampled (relaxed) problem converges to that of the limiting optimization problem, as the number of samples grows large.

Our work can be extended along several directions. One avenue of investigation is to study how the input dimension influences the convergence rate and the time discretization error. For mean field models without control, \cite{LuconStannat14} study the speed at which the empirical measure of a class of disordered diffusions with singular interactions converges to the solution of a deterministic McKean-Vlasov equation. \cite{BencheikhJourdain21} establish the rate of convergence for a system of interacting particles with mean-field rank-based interaction in the drift coefficient and constant diffusion coefficient. The core notion is the propagation of chaos for large systems of interacting particles. In the discrete time case, \cite{MottePhamAAP2021} show the role of relaxed controls in the study of Markov decision processes (MDP) under mean field interaction with common noise, and establish a rate of convergence for the problem of $N$-cooperative agents as $N$ grows to infinity. Their results can be leveraged to explore the speed of convergence of $J_N$ to $J$ and ${\rm argmin}~J_N$ to ${\rm argmin}~J$, as $N\to\infty$.

In the absence of derivative-based regularization (i.e., $\lambda_2=0$ in \eqref{eq:squareLR}), our results hold for the following linear type of forcing function:
\begin{align}\label{eq:dynamf}
  f(t,\theta,x,\eta)=f(t,(\theta^{(1)},\theta^{(2)}),x,\eta)=\theta^{(1)}S\left(t,x,\eta\right)+\theta^{(2)},
\end{align}
where $\theta^{(1)}\in\mathbb R^{d\times l}$, $\theta^{(2)}\in\mathbb R^d$. The function  $S(t,x,\eta):[0,T]\times\R^d\times\R^q\to\R^l$ is continuous in $(t,x,\eta)$, and it satisfies the inequality $\|S\|_{\infty}+\|\nabla_{x}S\|_{\infty}+\|\nabla_{\eta}S\|_{\infty}<+\infty$, where $\|\cdot\|_{\infty}$ denotes the super-norm.

Our limiting results have implications for the analysis of deep ResNets. They imply that the sampled control problems arising in the optimal training of a deep ResNet can be cast as a one-dimensional stochastic control problem. Despite the solvability of the resulting one-dimensional stochastic control problem is nontrivial, this reduction allows to bypass the complexity of a large sample size during the supervised learning process. In a future continuation of this work, it would be interesting to establish a connection between the limiting stochastic control problem $\inf_{Q\in{\cal Q}(\nu)}J(Q)$ (c.f.~Theorem~\ref{thm:miniconver}) and a class of deterministic FPK control problems described as follows: for some $\mu_0\in{\cal P}_2(E)$,
\begin{align}\label{eq:deterministicprob}
\underline{J}^d:=\min_{\theta\in U_m^d} J^d(\mu^{\theta},\theta),\quad { U_m^d:=\{\theta\in{\cal H}_m^1;\ \theta\in\Theta,~{\rm a.s.~on}~(0,T)\}},
\end{align}
subject to the constraint:
\begin{align}\label{eq:constraint}
\left\{
  \begin{array}{ll}
    \displaystyle \lc\mu^{\theta}(t),\varphi(t)\rc - \lc\mu^{\theta}(0),\varphi(0)\rc - \int_0^t\lc\mu^{\theta}(s),{{\cal A}^{\theta,\lc\mu^{\theta}(s),\rho\rc}}\varphi(s)\rc ds=0, & t\in(0,T];\\[0.6em]
\displaystyle \mu^{\theta}(0) =\mu_0,\ \forall\ \varphi\in{\cal D}.
  \end{array}
\right.
\end{align}
For $(\mu,\theta)\in\hat{S}\times{\cal H}_m^1$, the objective functional $J^d$ in~\eqref{eq:deterministicprob} is given by
\begin{align}\label{eq:objectiveJd}
J^d(\mu,\theta)&:=\lc\mu(T),L\rc+\frac{\beta}{\alpha}\int_0^T\lc\mu(t),L\rc dt+\int_0^T\{\lambda_1|\theta(t)|^2+\lambda_2|\theta'(t)|^2\}dt.
\end{align}
The deterministic optimal control in \eqref{eq:deterministicprob} is obtained by minimizing the objective function under the FPK equation constraint. This type of control problems has also been studied numerically using receding-horizon control techniques, see \cite{AnnunziatoBorzi2010}.

\section*{Acknowledgements}

We thank the two anonymous reviewers for their careful reading of our manuscript and their insightful comments and suggestions, which certainly helped to improve our manuscript. L. Bo was supported by the Natural Science Foundation of China (Grant 11971368). A. Capponi was supported in part by the Natural Science Foundation (Grant DMS-1716145).

\appendix
\section{Appendix}\label{app:proof1}
\renewcommand\theequation{A.\arabic{equation}}
\setcounter{equation}{0}

This Appendix provides proofs to some of the propositions and lemmas stated in the main body. Additionally, it provides a miscellaneous of technical results, along with the corresponding proofs, which are used to derive proofs of propositions and theorems stated in the main body of the paper.
\begin{proof}[Proof of Proposition~\ref{prop:reaxed-sol}]
Without loss of generality, we assume that $\alpha_N=\inf_{Q\in{\cal Q}(\nu)}J_N(Q)<+\infty$. In view of \eqref{eq:control2Q}, let $(Q_k)_{k=1}^{\infty}\subset{\cal Q}(\nu)$ be a minimizing sequence such that
\begin{align}\label{eq:minimizingse}
 0\leq \alpha_N\leq J_N(Q_k)\leq \alpha_N +\frac{1}{k},\quad \forall\ k\geq1.
\end{align}
Above, for $k\geq1$, the objective functional
\begin{align}\label{eq:JNQk00}
J_N(Q_k)=\Ex^{Q_k}\left[L_N(\tilde{X}_k^{\theta}(T),\tilde{Y}_k(0))+\int_0^TR_N(\theta_k(t),\theta_k'(t);\tilde{X}_k^{\theta}(t),\tilde{Y}_k(0))dt\right],
\end{align}
where ${\cal X}_k:=(\zeta_k,W_k,\theta_k)=((\zeta_k^i)_{i=1}^{\infty},(W_k^i)_{i=1}^{\infty},\theta_k)$ is the canonical process. Since $Q_k\in{\cal Q}(\nu)$, it follows from Definition~\ref{def:relax-sol} that (i) $Q_k\circ\zeta_k^{-1}=\nu$; (ii) $W_k$ consists of a sequence of independent Wiener processes on $(\Omega_{\infty},\Fx,Q_k)$; (iii) $\theta_k\in\mathbb{U}^{Q_k,\Fx}$. Moreover, for $i=1,\ldots,N$, {the state process $X_k^{\theta,i}$} is the strong solution of \eqref{eq:modeli} driven by $(\zeta_k,W_k,\theta_k)$. Then, it follows from \eqref{eq:LNRN}, \eqref{eq:minimizingse} and \eqref{eq:JNQk00} that, for all $k\geq1$,
\begin{align}
(\lambda_1\wedge\lambda_2)\Ex^{Q_k}\left[\int_0^T\{\left|\theta_k(t)\right|^2+|\theta_k'(t)|^2\}dt\right]\leq J_N(Q_k)\leq \alpha_N+\frac{1}{k}.
\end{align}
This implies that
\begin{align}\label{eq:unformesttheta}
\sup_{k\geq1}\Ex^{Q_k}\left[\int_0^T\{\left|\theta_k(t)\right|^2+|\theta_k'(t)|^2\}dt\right]\leq \frac{\alpha_N}{\lambda_1\wedge\lambda_2}.
\end{align}
Note that $\theta_k$ is a ${\cal H}_m^1$ (as a subset of ${\cal C}_m$)-valued random variable on $(\Omega_{\infty},\F_{\infty},Q_k)$ for $k\geq1$. Then, for any $\delta>0$, it follows from H\"older inequality that
\begin{align}\label{eq:thetaktsderi}
\sup_{k\geq1}\Ex^{Q_k}\left[{\sup_{\substack{|t-s|\leq\delta,\\0\leq s,t\leq T}}}\left|\theta_k(t)-\theta_k(s)\right|^2\right]\leq \sup_{k\geq1}\Ex^{Q_k}\left[{\sup_{\substack{|t-s|\leq\delta,\\0\leq s,t\leq T}}}\left|t-s\right|\int_0^T\left|\theta_k'(u)\right|^2du\right]\leq\frac{\alpha_N\delta}{\lambda_1\wedge\lambda_2}.
\end{align}
For any $\epsilon>0$ and $\delta>0$, using Chebychev's inequality, we arrive at
\begin{align}
&\sup_{k\geq1}{Q_k}\circ\theta_k^{-1}\left(\left\{h\in{\cal C}_m;\ {\sup_{\substack{|t-s|\leq\delta,\\0\leq s,t\leq T}}}\left|h(t)-h(s)\right|>\epsilon\right\}\right)=\sup_{k\geq1}Q_k\left({\sup_{\substack{|t-s|\leq\delta,\\0\leq s,t\leq T}}}\left|\theta_k(t)-\theta_k(s)\right|>\epsilon\right)\nonumber\\
&\qquad\qquad\leq\frac{1}{\epsilon^2}\Ex^{Q_k}\left[{\sup_{\substack{|t-s|\leq\delta,\\0\leq s,t\leq T}}}\left|\theta_k(t)-\theta_k(s)\right|^2\right]\leq\frac{\alpha_N\delta}{(\lambda_1\wedge\lambda_2)\epsilon^2}.
\end{align}
This implies that, for any $\epsilon>0$,
\begin{align}\label{eq:Azera-Ascoli1}
&\lim_{\delta\to0}\sup_{k\geq1}Q_k\circ\theta_k^{-1}\left(\left\{h\in{\cal C}_m;\ {\sup_{\substack{|t-s|\leq\delta,\\0\leq s,t\leq T}}}\left|h(t)-h(s)\right|>\epsilon\right\}\right)=0.
\end{align}
Using similar estimates as~\eqref{eq:thetaktsderi}, we obtain that, for all $s\in[0,T]$,
\begin{align}
\Ex^{Q_k}\left[\sup_{t\in[0,T]}\left|\theta_k(t)\right|^2\right]\leq 2\Ex^{Q_k}\left[\left|\theta_k(s)\right|^2\right]+2T\Ex^{Q_k}\left[\int_0^T|\theta_k'(u)|^2du\right],\quad\forall\ k\geq1.
\end{align}
Integrating both sides of the above equation w.r.t. $s$, it follows from Fubini's theorem that
\begin{align}
\Ex^{Q_k}\left[\sup_{t\in[0,T]}\left|\theta_k(t)\right|^2\right]\leq \frac{2}{T}\Ex^{Q_k}\left[\int_0^T\left|\theta_k(s)\right|^2ds\right]+2T\Ex^{Q_k}\left[\int_0^T|\theta_k'(u)|^2du\right],\quad\forall\ k\geq1.
\end{align}
Then, for any $M>0$, we obtain from \eqref{eq:unformesttheta} that
\begin{align}
&\sup_{k\geq1}Q_k\circ\theta_k^{-1}\left(\left\{h\in {\cal C}_m;\ \sup_{t\in[0,T]}\left|h(t)\right|>M\right\}\right)=\sup_{k\geq1}Q_k\left(\sup_{t\in[0,T]}\left|\theta_k(t)\right|>M\right)\nonumber\\
&\qquad\qquad\leq\frac{1}{M^2}\sup_{k\geq1}\Ex^{Q_k}\left[\sup_{t\in[0,T]}\left|\theta_k(t)\right|^2\right]\leq\frac{1}{M^2}\left(2T+\frac{2}{T}\right)\frac{\alpha_N}{\lambda_1\wedge\lambda_2}.
\label{eq:supthetat}
\end{align}
By virtue of Arzel\`a-Ascoli theorem (see, e.g. \cite{Simon1987}) together with  \eqref{eq:Azera-Ascoli1} and \eqref{eq:supthetat}, we obtain that $(Q_k\circ\theta_k^{-1})_{k=1}^{\infty}$, viewed as a sequence of probability measures in ${\cal P}({\cal C}_m)$, is tight.

Note that $\Omega_\infty\subset\hat{\Omega}_\infty$ where $\hat{\Omega}_\infty:=\Xi_K^{\N}\times {\cal C}_p^{\mathbb{N}}\times{\cal C}_m$. We next claim that the sequence of probability measures $(Q_k\circ{\cal X}_k^{-1})_{k=1}^{\infty}$ in ${\cal P}(\hat{\Omega}_\infty)$ is tight.
Since we have proven above that $(Q_k\circ\theta_k^{-1})_{k=1}^{\infty}\subset{\cal P}({\cal C}_m)$, it suffices to show that $(Q_k\circ\zeta_k^{-1})_{k=1}^{\infty}\subset{\cal P}(\Xi_K^{\N})$ and $(Q_k\circ W_k^{-1})_{k=1}^{\infty}\subset{\cal P}({\cal C}_p^{\N})$ are respectively tight. Note that $(Q_k)_{k=1}^{\infty}\subset{\cal Q}(\nu)$, then $Q_k\circ\zeta_k^{-1}=\nu$ for $k\geq1$. Hence, the tightness of $(Q_k\circ \zeta_k^{-1})_{k=1}^{\infty}$ follows from the fact that $(\Xi_K^{\N},d_1)$ is Polish. Similarly, it follows from (ii) of Definition~\ref{def:relax-sol} that the r.v.s $W_k$, for $k\geq1$, have the same distribution. Thus, $Q_1\circ W_1^{-1}=Q_k\circ{W}_k^{-1}$ for all $k\geq1$, and hence $(Q_k\circ W_k^{-1})_{k=1}^{\infty}\subset{\cal P}({\cal C}_p^{\N})$ is tight, because $({\cal C}_p^{\N},d_2)$ is Polish. By Prokhorov's theorem, there exists a $Q^*\in{\cal Q}(\nu)$ such that $Q_k\circ{\cal X}_k^{-1}$ converges to (up to a subsequence) $Q^*$ in the weak topology of probability measures. Using the Skorokhod's representation theorem, there exists a probability space $(\Omega^*,\F^*,\Px^*)$, $\Omega_{\infty}$-valued r.v.s ${\cal X}_k^*=(\zeta^*_k,W_k^*,\theta_k^*)$ with ${\cal X}_k^*\overset{d}{=}{\cal X}_k$, and  ${\cal X}^*=(\zeta^*,W^*,\theta^*)$ with $\Px^*\circ({\cal X}^*)^{-1}=Q^*$ such that, $\Px^*$-a.s., as $k\to\infty$,
\begin{align}\label{eq:Pstarconver}
\zeta_k^*&\to \zeta^*\ {\rm in}\ \Xi_K^{\N};\quad W_k^*\to W^*\ {\rm in}\ {\cal C}_p^{\N};\quad
\theta_k^*\to \theta^*\ {\rm in}\ {\cal C}_m.
\end{align}

Let ${\cal D}:=C_0^{\infty}(\R^m)$ be the space of test functions with its dual space given by ${\cal D}'$. Denote by $(\cdot,\cdot)$ the dual pair between ${\cal D}$ and ${\cal D}'$. We next prove that $\theta^*_k$ is ${\cal H}_m^1$-valued. For any $\phi\in{\cal D}$, we define the linear functional ${\cal T}_k(\phi):=(\theta_k^*,\phi')$ on ${\cal D}$, for $k\geq1$. Let $(\phi_l)_{l=1}^{\infty}\subset{\cal D}$ be dense in ${\cal L}_m^2$ and consider
\begin{align}
\left\|{\cal T}_k\right\|:=\sup_{l\geq1}\frac{|{\cal T}_k(\phi_l)|}{\left\|\phi_l\right\|_{{\cal L}_m^2}}=\sup_{l\geq1}\frac{|(\theta_k^*,\phi_l')|}{\left\|\phi_l\right\|_{{\cal L}_m^2}}.
\end{align}
Let $\Ex^*$ be the expectation operator under $\Px^*$. Then, for all $k\geq1$, it holds that
\begin{align}
\Ex^*\left[\sup_{l\geq1}\frac{\left|(\theta_k^*,\phi_l')\right|}{\left\|\phi_l\right\|_{{\cal L}_m^2}}\right]
=\Ex\left[\sup_{l\geq1}\frac{\left|(\theta_k',\phi_l)\right|}{\left\|\phi_l\right\|_{{\cal L}_m^2}}\right]=\Ex\left[\left\|\theta_k'\right\|_{{\cal L}_m^2}\right]<+\infty,
\end{align}
and hence $\|{\cal T}_k\|<+\infty$, $\Px^*$-a.s.. Then, by Hahn-Banach theorem, it holds $\Px^*$-a.s. that ${\cal T}_k$ can be extended to be a bounded linear functional on ${\cal L}_m^2$. Thus, Riesz representation theorem yields the existence of an ${\cal L}_m^2$-valued random variable $\hat{\theta}_k^*$ such that ${\cal T}_k(\phi)=(\hat{\theta}_k^*,\phi)$ for all $\phi\in {\cal L}_m^2$, $\Px^*$-a.s. In particular, ${\cal T}_k(\phi)=(\theta_k^*,\phi')=(\hat{\theta}_k^*,\phi)$ for any $\phi\in{\cal D}$. Hence, $\theta_k^*\in{\cal H}_m^1$, $\Px^*$-a.s. It follows that $\Px^*\circ({\cal X}_k^*)^{-1}\in{\cal P}(\Omega_{\infty})$. Moreover, we obtain from \eqref{eq:unformesttheta} that
\begin{align}
\sup_{k\geq1}\Ex^*\left[\left\|{\theta_k^*}'\right\|_{{\cal L}_m^2}\right]=\sup_{k\geq1}\Ex^*\left[\sup_{l\geq1}\frac{|(\theta_k^*,\phi_l')|}{\|\phi_l\|_{{\cal L}_m^2}}\right]
=\sup_{k\geq1}\Ex\left[\sup_{l\geq1}\frac{|(\theta_k,\phi_l')|}{\|\phi_l\|_{{\cal L}_m^2}}\right]=\sup_{k\geq1}\Ex\left[\left\|\theta_k'\right\|_{{\cal L}_m^2}\right]<\infty.
\end{align}
This implies that $({\theta_k^*}')_{k=1}^{\infty}$ is bounded in $L^2((0,T)\times\Omega^*;dt\otimes d\Px^*)$, and hence ${\theta_k^*}'$ (up to a subsequence) converges weakly to some element $\hat{\theta}^*\in L^2((0,T)\times\Omega^*;dt\otimes d\Px^*)$ as $k\to\infty$. Let $\phi\in{\cal D}$. By \eqref{eq:Pstarconver}, we have that, for all $H\in L^{\infty}(\Omega^*;\Px^*)$,
\begin{align}
\Ex^*\left[(\theta^*,\phi')H\right]=\lim_{k\to\infty}\Ex^*\left[(\theta_k^*,\phi')H\right]=-\lim_{k\to\infty}\Ex^*\left[({\theta_k^*}',\phi)H\right]
=-\Ex^*[(\hat{\theta}^*,\phi)H].
\end{align}
Then $(\theta^*,\phi')=-(\hat{\theta}^*,\phi)$, $\Px^*$-a.s. Therefore, using the separability of ${\cal D}$, it holds $\Px^*$-a.s. that $(\theta^*,\phi')=-(\hat{\theta}^*,\phi)$ for all $\phi\in{\cal D}$. This gives that ${\theta^*}'=\hat{\theta}^*$, $\Px^*$-a.s. Thus, $\Px^*\circ({\cal X}^*)^{-1}\in{\cal P}(\Omega_\infty)$.

For $k\geq1$ and $i\in\N$, let {$X_k^{*,i}$} be the strong solution of SDE~\eqref{eq:modeli} driven by $(\zeta_k^*,W_k^*,\theta_k^*)$. In other words, under $(\Omega^*,\F^*,\Px^*)$, {$(X_k^{*,i}(0),Y_k^{*,i}(0))=\zeta_k^{*,i}$}, and for $t\in(0,T]$,
{\begin{align}
\displaystyle dX_k^{*,i}(t)= f\left(t, \theta_k^*(t) ,X_k^{*,i}(t),\frac{1}{N}\sum_{j=1}^N\rho(X_k^{*,j}(t))\right)dt + \varepsilon^idW_k^{*,i}(t).\label{eq:modelistark}
\end{align}}
Moreover, for $i\in\N$, let {$X^{*,i}$} be the strong solution of SDE~\eqref{eq:modeli} driven by $(\zeta^*,W^*,\theta^*)$. Under $(\Omega^*,\F^*,\Px^*)$, {$(X^{*,i}(0),Y^{*,i}(0))=\zeta^{*,i}$}, and for $t\in(0,T]$,
\begin{align}
\displaystyle dX^{*,i}(t)= f\left(t, \theta^*(t),X^{*,i}(t),\frac{1}{N}\sum_{j=1}^N\rho(X^{*,j}(t))\right)dt + \varepsilon^idW^{*,i}(t).\label{eq:modelistar2}
\end{align}
By the assumption {\Afg}, we obtain that, for all $t\in[0,T]$, $\Px^*$-a.s.
\begin{align}
&\sup_{s\in[0,t]}\left|X_k^{*,i}(s)-X^{*,i}(s)\right|^2\leq C_{T}\Bigg\{\left|X_k^{*,i}(0)-X^{*,i}(0)\right|^2+\int_0^t\left|\theta_k^*(s)-\theta^*(s)\right|^2ds+\int_0^t\left|X_k^{*,i}(s)-X^{*,i}(s)\right|^2ds\nonumber\\
&\qquad\quad+\int_0^t\left|\frac{1}{N}\sum_{j=1}^N(\rho(X_k^{*,j}(s))-\rho(X^{*,j}(s)))\right|^2ds+|\varepsilon^i|\sup_{s\in[0,t]}\left|W_k^{*,i}(s)-W^{*,i}(s)\right|^2\Bigg\}.
\end{align}
Using the Lipschitz property of $\rho:\R^d\to\R^{q}$, we obtain from Jensen's inequality that
\begin{align}
&\left|\frac{1}{N}\sum_{j=1}^N(\rho(X_k^{*,j}(t))-\rho(X^{*,j}(t)))\right|^2\leq
[\rho]_{\rm Lip}^2\frac{1}{N}\sum_{j=1}^N\left|X_k^{*,j}(t)-X^{*,j}(t)\right|^2.
\end{align}
Define $|\tilde{x}|_N^2:=\frac{1}{N}\sum_{i=1}^N|x^i|^2$ for $\tilde{x}=(x^1,\ldots,x^N)\in(\R^{d})^N$. It holds that, $\Px^*$-a.s.
{\begin{align}
&\sup_{s\in[0,t]}\left|\tilde{X}_k^{*}(s)-\tilde{X}^{*}(s)\right|_N^2\leq e^{C_{T,N}}\left\{\left|\tilde{X}_k^{*}(0)-\tilde{X}^{*}(0)\right|_N^2+\int_0^t\left|\theta_k^*(s)-\theta^*(s)\right|^2ds+\sup_{s\in[0,t]}\left|\tilde{W}_k^{*}(s)-\tilde{W}^{*}(s)\right|_N^2\right\}.\nonumber
\end{align}}
Therefore, using {the convergence results from \eqref{eq:Pstarconver}}, we conclude that, as $k\to\infty$,
\begin{align}\label{eq:XNstarconver}
\sup_{s\in[0,t]}\left|\tilde{X}_k^{*}(s)-\tilde{X}^{*}(s)\right|_N^2\to 0,\qquad \Px^*\text{-a.s.}
\end{align}
From \eqref{eq:Pstarconver} and \eqref{eq:XNstarconver}, it follows that, $\Px^*$-a.s., as $k\to\infty$,
\begin{align}\label{eq:objecconN}
&\alpha\left|\tilde{X}_k^{*}(T)-\tilde{Y}_k^*(0)\right|_N^2+\beta\int_0^T\left|\tilde{X}_k^{*}(t)-\tilde{Y}_k^*(0)\right|_N^2dt+\lambda_1\int_0^T\left|\theta_k^*(t)\right|^2dt\nonumber\\
&\qquad\quad\too \alpha\left|\tilde{X}^{*}(T)-\tilde{Y}^*(0)\right|_N^2+\beta\int_0^T\left|\tilde{X}^{*}(t)-\tilde{Y}^*(0)\right|_N^2dt+\lambda_1\int_0^T\left|\theta^*(t)\right|^2dt.
\end{align}
It follows from properties of convex functionals and weak convergence (see, e.g. Theorem 1.4 in \cite{Figueiredo1989}) that
\begin{align}\label{eq:thetaderifatou}
\Ex^*\big[\|{\theta^{*}}'\|_{{\cal L}_m^2}^2\big]\leq \liminf_{k\to\infty}\Ex^*\big[\|{\theta_k^{*}}'\|_{{\cal L}_m^2}^2\big].
\end{align}
Recall the sampled objective functional given in \eqref{eq:control2Q}, and the square form of the loss function and regularizer given in \eqref{eq:LNRN}. Note that $Q_k=\Px^*\circ({\cal X}_k^*)^{-1}$, and $\Px^*\circ({\cal X}^*)^{-1}=Q^*$. Then, it follows from \eqref{eq:thetaderifatou} that $Q^*\in{\cal Q}(\nu)$. Moreover, by Fatou's lemma, it follows that
\begin{align}
J_N(Q^*)=J_N(\Px^*\circ({\cal X}^*)^{-1})\leq\liminf_{k\to\infty}J_N(\Px^*\circ({\cal X}_k^*)^{-1})=\liminf_{k\to\infty}J_N(Q_k).
\end{align}
We then deduce that $J_N(Q^*)\leq\alpha_N$ by using \eqref{eq:minimizingse}. Recall that $Q^*\in{\cal Q}(\nu)$ and hence $\alpha_N\leq J_N(Q^*)$. Therefore $J_N(Q^*)=\alpha_N$, i.e., $Q^*\in{\cal Q}(\nu)$ is the optimal relaxed solution of \eqref{eq:relaxed-controlQ}. This ends the proof.
\end{proof}

{The proofs of Lemma~\ref{lem:hatSequibounded-cont} and Lemma~\ref{lem:relativecomW2}, rely on the following technical lemma. Its proof is omitted because it is standard, and based on an application of the It\^o's formula and use of the Doob's maximal inequality (the proof can be provided upon request).}
\begin{lemma}\label{lem:estimateXN}
Let $\tilde{X}_N(t)=(X_N^1(t),\ldots,X_N^N(t))$ for $t\in[0,T]$ with $X_N^i(t)$ satisfying SDE~\eqref{eq:modelinonstark}. Let {\Afg} and {\Atheta} hold. Then, for all $p\geq1$, there exists a positive constant $C_p$ which is independent of $N$ such that
\begin{align}\label{eq:estimate-XN}
\Ex^{Q_N}\left[\sup_{t\in[0,T]}\left|\tilde{X}_N(t)\right|_N^{2p}\right]\leq C_p,\qquad \forall\ N\geq1.
\end{align}
\end{lemma}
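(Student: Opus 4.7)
The approach is a standard Gronwall argument adapted to the mean-field structure, carried out on the empirical $L^{2p}$-moment $\varphi_N(t):=\Ex^{Q_N}[\sup_{u\leq t}|\tilde{X}_N(u)|_N^{2p}]$. The key observation is that the interaction term is linear in the empirical measure and Lipschitz under $\rho$, so the estimate closes \emph{in the averaged quantity}. Concretely, by assumption {\Afg}(iv) and Jensen's inequality,
\begin{align*}
\Big|\tfrac{1}{N}\sum_{j=1}^N\rho(X_N^j(s))\Big|^{2p}\leq C_p\Big(1+\tfrac{1}{N}\sum_{j=1}^N|X_N^j(s)|^{2p}\Big)=C_p\bigl(1+|\tilde{X}_N(s)|_N^{2p}\bigr),
\end{align*}
so the mean-field contribution is dominated by the same quantity we wish to control.

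Starting from the integral form of the $i$-th state equation in \eqref{eq:modelinonstark}, I would use {\Afg}(iii) together with the compactness of $\Theta$ (which gives $|\theta_N(s)|\leq C$) and the bound $|\varepsilon^i|\leq K$ from {\Afg}(i) to write
\begin{align*}
|f(s,\theta_N(s),Z_N^i(s),X_N^i(s),\eta)|^{2p}\leq C_p\bigl[1+|f(s,0,0,0,0)|^{2p}+|Z_N^i(s)|^{2p}+|X_N^i(s)|^{2p}+|\eta|^{2p}\bigr],
\end{align*}
with $\eta=\tfrac{1}{N}\sum_j\rho(X_N^j(s))$. Taking $\sup_{u\leq t}$ and then $\Ex^{Q_N}[\cdot]$, applying Hölder to the drift integral and Burkholder--Davis--Gundy to the stochastic integral (using $|\varepsilon^i|\leq K$), and exploiting $|X_N^i(0)|\leq K$ from $\zeta^i\in\Xi_K$, yields a per-sample estimate of the form
\begin{align*}
\Ex^{Q_N}\Big[\sup_{u\leq t}|X_N^i(u)|^{2p}\Big]\leq C_p\Big\{1+\int_0^t\!\!\big(1+\Ex^{Q_N}[|Z_N^i(s)|^{2p}]+\Ex^{Q_N}[\sup_{u\leq s}|X_N^i(u)|^{2p}]+\Ex^{Q_N}[|\tilde{X}_N(s)|_N^{2p}]\big)ds\Big\},
\end{align*}
where the term $\int_0^T|f(s,0,0,0,0)|^{2p}ds$ is absorbed into the constant via Hölder applied to the square-integrability in {\Afg}(ii) combined with the finite horizon $T$, and the moment bound on $Z_N^i$ is handled by the referenced estimate \eqref{eq:estimate-ZN}.

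The averaging step is where the mean-field structure pays off: by Jensen's inequality, $|\tilde{X}_N(u)|_N^{2p}=\bigl(\tfrac{1}{N}\sum_i|X_N^i(u)|^2\bigr)^p\leq\tfrac{1}{N}\sum_i|X_N^i(u)|^{2p}$, so that $\varphi_N(t)\leq\tfrac{1}{N}\sum_i\Ex^{Q_N}[\sup_{u\leq t}|X_N^i(u)|^{2p}]$. Averaging the per-sample estimate over $i=1,\ldots,N$ therefore produces a closed scalar integral inequality $\varphi_N(t)\leq A+B\int_0^t\varphi_N(s)ds$ with $A$ and $B$ independent of $N$ (the $Z$-moment and the $f_0$-term contribute $N$-independent constants because the $\xi^i$ are uniformly bounded by $K$). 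Gronwall's lemma then gives $\varphi_N(T)\leq Ae^{BT}=:C_p$, uniformly in $N$, which is the desired conclusion.

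The main technical obstacle is keeping all constants uniform in $N$ while the coupling through $\tfrac{1}{N}\sum_j\rho(X_N^j)$ a priori entangles all $N$ coordinates: it is crucial that the mean-field term be estimated by a quantity ($|\tilde X_N(s)|_N^{2p}$) that itself appears on the left-hand side after averaging, rather than by a per-$i$ supremum that would explode with $N$. Compactness of $\Theta$ is what keeps the $\theta_N$-contribution bounded without requiring higher-moment control on the control process, and the uniform bound $|\xi^i|\leq K$ in {\Afg}(i) is what prevents the initial data and the diffusion coefficients from introducing $N$-dependent growth.
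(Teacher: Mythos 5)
Your proposal is correct and follows essentially the same route as the paper's proof: a growth/Lipschitz bound on the drift, Jensen's inequality to dominate the mean-field term by the averaged $2p$-th moment, averaging over $i$ to close the inequality in $|\tilde X_N|_N^{2p}$, Gronwall, and the separate uniform bound \eqref{eq:estimate-ZN} on $\tilde Z_N$ (the paper uses Doob's maximal inequality on $\sup_t|W_N^i(t)|^{2p}$ where you invoke BDG, which is equivalent here since the noise is additive). The only minor quibble is notational: the paper defines $|\tilde x|_N^{2p}:=\frac1N\sum_i|x^i|^{2p}$ directly, so your extra Jensen step relating $(\frac1N\sum_i|x^i|^2)^p$ to $\frac1N\sum_i|x^i|^{2p}$ is harmless but not needed.
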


\begin{proof}[Proof of Lemma~\ref{lem:hatSequibounded-cont}]
It follows from \eqref{eq:QxN} and Chebyshev's inequality that
\begin{align}
&\sup_{N\geq1}\Qx^N\left(\left\{\vartheta\in\hat{S};\ \sup_{t\in[0,T]}\int_E|e|^{2+\epsilon}\vartheta(t,de)> M\right\}\right)\leq\frac{1}{M}\sup_{N\geq1}\Ex^{Q_N}\left[\sup_{t\in[0,T]}\int_E|e|^{2+\epsilon}\mu^N(t,de)\right]\nonumber\\
&\qquad\qquad\leq\frac{1}{M}\sup_{N\geq1}\Ex^{Q_N}\left[\sup_{t\in[0,T]}\frac{1}{N}\sum_{i=1}^N\left|(\xi_N^i,X_N^{i}(t))\right|^{2+\epsilon}\right]\nonumber\\
&\qquad\qquad\leq\frac{C_{\epsilon}}{M}\Bigg\{\sup_{N\geq1}\Ex^{Q_N}\left[\frac{1}{N}\sum_{i=1}^N\left|\xi_N^i\right|^{2+\epsilon}\right]+\sup_{N\geq1}\Ex^{Q_N}\left[\sup_{t\in[0,T]}\left|\tilde{X}_N(t)\right|_{N}^{2+\epsilon}\right]\Bigg\}.
\end{align}
By the assumption {\Afg}-(i) and noting that $\zeta^i\in\Xi_K$ for all $i\geq1$, the limit \eqref{eq:QNM0} follows from  Lemma~\ref{lem:estimateXN}.
Using the representation of the empirical measure given by \eqref{eq:empiricalpmN}, it holds that, for $s,t\in[0,T]$,
{\begin{align}
{\cal W}_{E,2}^2\left(\mu^N(t),\mu^N(s)\right)\leq\frac{1}{N}\sum_{i=1}^N\left|X_N^{i}(t)-X_N^{i}(s)\right|^2.
\end{align}}
Then, it follows from Chebyshev's inequality that
\begin{align*}
&\sup_{N\geq1}\Qx^N\left(\left\{\vartheta\in\hat{S};\ \sup_{|t-s|\leq\delta}{\cal W}_{E,2}(\vartheta(t),\vartheta(s))>\varepsilon\right\}\right)\leq\frac{1}{\varepsilon^2}\sup_{N\geq1}\Ex^{Q_N}\left[\frac{1}{N}\sum_{i=1}^N\sup_{|t-s|\leq\delta}\left|X_N^{i}(t)-X_N^{i}(s)\right|^2\right].
\end{align*}
Using the assumption {\Afg} and Lemma~\ref{lem:estimateXN} with the assumption {\Atheta}, we deduce the existence of a positive constant $C$ which is independent of $N$ such that
{\begin{align}\label{eq:ZXiaverage}
\sup_{N\geq1}\Ex^{Q_N}\left[\frac{1}{N}\sum_{i=1}^N\sup_{|t-s|\leq\delta}\left|X_N^{i}(t)-X_N^{i}(s)\right|^2\right]&\leq C\delta\{1+\Upsilon(\delta)\},
\end{align}}
where, for $\delta>0$, we have defined
\begin{align}\label{eq:GammaN}
\Upsilon(\delta):=\sup_{N\geq1}\Ex^{Q_N}\left[\frac{1}{N}\sum_{i=1}^N\sup_{|t-s|\leq\delta}\left|W_N^{i}(t)-W_N^{i}(s)\right|^2\right].
\end{align}
Note that $(W_N^1,\ldots,W_N^N)$ are independent Wiener processes under $Q_N$. Note that, it holds that
\begin{align*}
\Upsilon(\delta)&=\sup_{N\geq1}\Ex^{Q_N}\left[\frac{1}{N}\sum_{i=1}^N\sup_{|t-s|\leq\delta}\left|W_N^{i}(t)-W_N^{i}(s)\right|^2\right]
=\sup_{N\geq1}\Ex^{Q_N}\left[\sup_{|t-s|\leq\delta}\left|W_N^1(t)-W_N^1(s)\right|^2\right]\nonumber\\
&=\Ex^{Q_1}\left[\sup_{|t-s|\leq\delta}\left|W_1^1(t)-W_1^1(s)\right|^2\right].
\end{align*}
Since $W_1^1$ is a Wiener process under $Q_1$, we have $\lim_{\delta\to0}\sup_{|t-s|\leq\delta}|W_1^1(t)-W_1^1(s)|^2=0$, $Q_1$-a.s.. On the other hand, the BDG inequality yields $\Ex^{Q_1}[\sup_{t\in[0,T]}|W_1^1(t)|^2]\leq C_T$ for some $C_T>0$ depending on $T$ only. Then, it follows from DCT that
$\Upsilon(\delta)\to0$ as $\delta\to0$. Hence, the limit \eqref{eq:QNM2} follows from \eqref{eq:ZXiaverage}.
\end{proof}

\begin{proof}[Proof of Lemma~\ref{lem:Wtouni}]
Define the mappings $\hat{I}_N,\hat{I}_{*}:\Xi_K^{\N}\rightarrow\Omega^0_\infty\times {\cal P}_2(E)$ as follows: for any $\hat{\zeta}\in\Xi_K^{\N}$,
	\begin{align}\label{eq:hatIN}
	\hat{I}_N(\hat{\zeta}):=(\hat{\zeta},I_N(\hat{\zeta})),\quad \hat{I}_*(\hat{\zeta}):=(\hat{\zeta},I_*(\hat{\zeta})).
	\end{align}
It follows from \eqref{eq:assumptionAnu3} in {\Anu} that $\nu\circ\hat{I}_N^{-1}\Rightarrow  \nu\circ\hat{I}_*^{-1}$ as $N\to\infty$. Observe that $Q_N\circ\hat{I}_N(\zeta_N)^{-1}=\{Q_N\circ\zeta^{-1}_N\}\circ\hat{I}_N^{-1}=\nu\circ\hat{I}_N^{-1}$, and $Q\circ\hat{I}_*(\zeta)^{-1}=\nu\circ\hat{I}_*^{-1}$. Then
\begin{align}\label{eq:weakcon000N}
	Q_N\circ\hat{I}_N(\zeta_N)^{-1}\Rightarrow Q\circ\hat{I}_*(\zeta)^{-1}=\nu\circ\hat{I}_*^{-1},\qquad N\to\infty.
\end{align}
Using the inequality ${\cal W}_{\Omega_{\infty},2}(Q_N\circ(\zeta_N,\theta_N)^{-1},Q\circ(\zeta,\theta)^{-1})\leq{\cal W}_{\Omega_{\infty},2}(Q_N,Q)$ and the assumption that $\lim_{N\to\infty}{\cal W}_{\Omega_{\infty},2}(Q_N,Q)=0$, we arrive at
\begin{align}\label{eq:weakcon111N}
	\lim_{N\to\infty}{\cal W}_{\Omega_{\infty},2}(Q_N\circ(\zeta_N,\theta_N)^{-1},Q\circ(\zeta,\theta)^{-1})=0.
\end{align}
Combining \eqref{eq:weakcon000N} and \eqref{eq:weakcon111N}, we obtain that $(Q_N\circ(\zeta_N,I_N(\zeta_N),\theta_N)^{-1})_{N=1}^{\infty}$ is tight.
	
We next prove that any convergent subsequence of $(Q_N\circ(\zeta_N,I_N(\zeta_N),\theta_N)^{-1})_{N=1}^{\infty}$ has the same weak limit. To start with, let $(N_k^i)_{k=1}^{\infty}$, $i=1,2$, be two subsequences of $\N$ such that $Q_{N_k^i}\circ(\zeta_{N_k^i},I_{N_k^i}(\zeta_{N_k^i}),\theta_{N_k^i})^{-1}\Rightarrow \Px^i\circ(\zeta_i,J_i,\theta_i)^{-1}$ as $k\to\infty$. Here, $(\zeta_i,J_i,\theta_i)$ is a $\Omega^0_\infty\times{\cal P}_2(E)\times{\cal H}^1_m$-valued random variable defined on some probability space $(\Omega^i,\F^i,\Px^i)$. By \eqref{eq:weakcon000N}, we have that $\Px^i\circ(\zeta_i,J_i)^{-1}=\nu\circ\hat{I}_*^{-1}$ for $i=1,2$. It then follows from \eqref{eq:hatIN} that, for $i=1,2$,
\begin{align}\label{eq:probab1nu}
	\Px^i\left(\left\{\omega\in\Omega^i;\ J_i=I_*(\zeta_i(\omega))\right\}\right)=\nu\left(\left\{\hat{\zeta}\in\Xi_K^{\N};\ \hat{I}_*(\hat{\zeta})=(x_1,x_2),\ x_2=I_*(x_1)\right\}\right)=1.
\end{align}
By applying the Gluing lemma (see Lemma 7.6 in \cite{Villani2003}), there exists a coupling $(J_1^*,J_2^*,\zeta^*,\theta^*)$ under some probability space $(\Omega^*,\F^*,\Px^*)$ such that $(\zeta^*,J_i^*,\theta^*)=(\zeta_i,J_i,\theta_i)$ in law for $i=1,2$. It then follows from \eqref{eq:probab1nu} that $J_1^*=J_2^*=I_*(\zeta^*)$, $\Px^*$-a.s. This yields that $\Px^{1}\circ(\zeta_1,J_1,\theta_1)^{-1}=\Px^{2}\circ(\zeta_2,J_2,\theta_2)^{-1}$, and hence every convergent subsequence of $(Q_N\circ(\zeta_N,I_N(\zeta_N),\theta_N)^{-1})_{N=1}^{\infty}$ admits the same weak limit. Moreover, for $Q\in{\cal Q}(\nu)$, the assumption {\Anu} together with Definition~\ref{def:relax-sol}-(i) yields
\begin{align*}
	&Q\left(\left\{\omega\in\Omega_{\infty};\ \lim_{N\to\infty}{\cal W}_{E,2}(I_N(\zeta(\omega)),I_*(\zeta(\omega)))=0\right\}\right)=\nu\left(\left\{\hat{\zeta}\in\Xi_K^{\N};\ \lim_{N\to\infty}{\cal W}_{E,2}(I_N(\hat{\zeta}),I_*(\hat{\zeta}))=0\right\}\right)=1,
\end{align*}
and hence $(\zeta,I_N(\zeta),\theta)\to(\zeta,I_*(\zeta),\theta)$, $Q$-a.s. This concludes the proof that $Q\circ(\zeta,I_N(\zeta),\theta)^{-1}\Rightarrow Q\circ(\zeta,I_*(\zeta),\theta)^{-1}$ as $N\to\infty$.
	Moreover, define a specific sequence $(\hat{Q}_N)_{N=1}^{\infty}\subset{\cal Q}(\nu)$ as $\hat{Q}_{2l-1}:=Q_{2l-1}$ and $\hat{Q}_{2l}=Q$ for all $l\in\N$. We then have that $\hat{Q}_N\circ(\zeta_N,I_N(\zeta_N),\theta_N)^{-1}\Rightarrow{Q}\circ(\zeta,I_*(\zeta),\theta)^{-1}$. This proves the weak convergence result in the lemma. 
\end{proof}

\begin{proof}[Proof of Lemma~\ref{lem:limitQstar}]
The uniqueness of a solution to the FPK equation~\eqref{eq:mustar1} in the trajectory sense follows from Proposition \ref{prop:limitP1}. We next show the existence. For given $(I_*,\theta)\in{\cal P}_2(E)\times{\cal C}_m$, consider the weak solution of the following parameterized SDE defined on a filtered probability space $(\hat{\Omega},\hat{\F},\hat{\Fx}=(\hat{\F}_t)_{t\in[0,T]},\hat{\Px})$ which supports a $p$-dimensional Brownian motion $\hat{W}=(\hat{W}(t))_{t\in[0,T]}$, and r.v.s {$(Y(0),X(0))\in\hat{\F}_0$:
\begin{align}\label{eq:XxiIstartheta} dX^{\xi,I_*,\theta}(t)=f\big(t,\theta(t),X^{\xi,I_*,\theta}(t),\hat{\Ex}[\rho(X^{\xi,I_*,\theta}(t))]\big)dt+\varepsilon d\hat{W}(t),
\end{align}
and $(\varepsilon,Y(0),X(0))$} admits the law $I_*$. It can then be verified that {$\mu(t):=\hat{\Px}\circ((\varepsilon,Y(0)),X^{\xi,I_*,\theta}(t))^{-1}$}, for $t\in[0,T]$, satisfies Eq.~\eqref{eq:mustar1}.

Moreover, by Lemma \ref{lem:Wtouni}, $Q_N\circ(\mu^N(0),\theta_N)^{-1}\Rightarrow Q\circ(I_*,\theta)^{-1}$ as $N\to\infty$. It follows from Theorem \ref{thm:limitP} that $\Qx^N:=Q_N\circ(\mu^N(0),\theta_N,\mu^N)^{-1}\to\hat{\Px}\circ(\hat{\mu}_0,\hat{\theta},\hat{\mu})^{-1}$ in ${\cal P}_2({\cal P}_2(E)\times{\cal C}_m\times\hat{S})$ for some probability space $(\hat{\Omega},\hat{\Fx},\hat{\Px})$, where $\hat{\Px}$-a.s., $\hat{\mu}$ is the unique solution of FPK equation \eqref{eq:mustar} with initial condition $\hat{\mu}(0)=\hat{\mu}_0$. Note that $(\hat{\mu}_0,\hat{\theta})$ has the law given by $Q\circ(I_*,\theta)^{-1}$. Then, from Gluing lemma, there exists a coupling $(\bar{\mu}_0,\bar{\theta},\bar{\mu}^1,\bar{\mu}^2)$ on some probability space $(\bar{\Omega},\bar{\F},\bar{\Px})$ such that $\bar{\Px}\circ(\bar{\mu}_0,\bar{\theta},\bar{\mu}^1)^{-1}=\hat{\Px}\circ(\hat{\mu}_0,\hat{\theta},\hat{\mu})^{-1}$ and $\bar{\Px}\circ(\bar{\mu}_0,\bar{\theta},\bar{\mu}^2)^{-1}=Q\circ(I_*,\theta,\mu_*)^{-1}$. Recall here that, $Q$-a.s., $\mu_*$ solves FPK equation \eqref{eq:mustar} with initial condition $\mu_*(0)=I_*$. Using a similar proof to that of Lemma \ref{lem:unilimit}, it follows that $(\hat{\mu}_0,\hat{\theta},\hat{\mu})$ and $(I_*,\theta,\mu_*)$ are identical in law. Taking $Q_N=Q$ for all $N\geq1$, we deduce that $\hat{\Qx}^*=Q\circ(I_*,\theta,\mu_*)^{-1}$.
\end{proof}

\noindent{\bf Proofs for the case $\lambda_2 = 0$ in \eqref{eq:squareLR}.}\quad We sketch the proofs for the case $\lambda_2 = 0$ in \eqref{eq:squareLR}. {We impose the assumption that the forcing function $f$ in Section~\ref{sec:concludes}}
is of the form specified by  \eqref{eq:dynamf}, both in the case of finite and infinite sample size. If the sample size is finite,  we can use the HJB equation to establish the existence of minimizers in the feedback form. Then, using the standard verification argument, we can show that the value function of relaxed controls coincides with the one of strong controls. Moreover, given the existence of optimal (feedback) controls, using stochastic maximum principle, we can study the convergence of minimizers as $N$ tends to infinity. For each fixed sample size $N$, we apply the stochastic maximum principle to characterize the optimal control $\theta_N=(\theta_N^{(1)}(t),\theta_N^{(2)}(t))_{t\in[0,T]}$. To adopt this framework, we take the canonical space in \eqref{eq:OmegaNFN} as $\Omega_\infty=\Xi_K^{\mathbb{N}}\times {\cal C}_{p}^{\mathbb{N}}\times L^1([0,T];\R^{d}\times\R^l)$ and use Definition \ref{def:relax-sol} on relaxed control without the condition (iii). For $i=1,\ldots,N$, consider the adjoint process $(p_N^i,q_N^i)=(p_N^i(t),q_N^i(t))_{t\in[0,T]}\in\R^d\times\R^{d\times Nd}$ which satisfies the following linear BSDE:
\begin{align*}
	dp^i_N(t)&=-\left[\theta^{(1)}_N(t)\nabla_xS(i,t)p^i_N(t)+\frac{\theta^{(1)}_N(t)}N\sum_{j=1}^N\nabla_\eta S(i,t)\nabla_x\rho\big(X^j(t)\big)p^j_N(t)+\partial_iR_N(t)\right]dt\nonumber\\
	&\quad+q^i_N(t)dW(t),\\
	p^i_N(T)&=\partial_iL_N(T)=\frac\alpha N(X^i(T)-Y^i(0)),
\end{align*}
where, for $(t,i)\in[0,T]\times\{1,\ldots,N\}$,  $W(t)^\top=(W^1_N(t)^\top,W^2(t)^\top,\ldots,W^N(t)^\top)$, $\partial_iR_N(t):=\frac\beta N(X^i(t)-Y^i(0))$, $S(i,t):=S(t,X^i(t),\sum_{j=1}^N\rho(X^j(t)))$ and $\nabla_{\ell}S(i,t):=\nabla_{\ell}S(t,X^i(t),\sum_{j=1}^N\rho(X^j(t)))$ for $\ell\in\{x,\eta\}$.
For $t\in[0,T]$, denote by
\begin{align*}
	&p_N(t)^\top:=(p^1_N(t)^\top,\ldots,p^N_N(t)^\top),\quad q_N(t)^\top:=(q^1_N(t)^\top,\ldots,q^N_N(t)^\top),\notag\\
	&X(t)^\top:=(X^1(t)^\top,\ldots,X^N(t)^\top),\quad Y(0)^\top:=(Y^1(0)^\top,\ldots,Y^N(0)^\top).
\end{align*}
Note that $p_N(t),X(t),Y(0)\in\R^{Nd\times1}$. Let $A^N(t)=(A^N_{ij}(t))_{N\times N}$ whose $(i,j)$-th entry as $d\times d$-submatrix given by
\begin{align*}
	A^N_{ij}(t):=\left\{\begin{aligned}
		&\theta^{(1)}_N(t)\nabla_xS(i,t),\quad i=j,\\
		&\frac{\theta^{(1)}_N(t)}N\nabla_\eta S(i,t)\nabla_x\rho\big(X^j(t)\big),\quad i\neq j.
	\end{aligned}\right.
\end{align*}
Hence, for $A^N(t)=(a_{kl}^{N}(t))_{Nd\times Nd}$, we have from the compactness of $\Theta$ and Assumption {\Afg} that $|a_{kl}^{N}(t)|\leq C$ for $k=l$; while $|a_{kl}^{N}(t)|\leq C/(Nd)$ for $k\neq l$, where $C>0$ is a constant which depends on $T$ only. Moreover, it holds that
\begin{align}\label{expression-pt}
	p_N(t)&=\frac\alpha N\Ex_t\left[\Phi^{N}_{t,T}(A^N)(X(T)-Y(0))\right]
	-\frac\beta N\Ex_t\left[\int_t^T\Phi^{N}_{t,s}(A^N)(X(s)-Y(0))ds\right],
\end{align}
where $\Ex_t[\cdot]:=\Ex[\cdot|\F_t]$ for $t\in[0,T]$. Here, $\Phi^{N}_{t,T}(A^N)\in\R^{Nd\times Nd}$ satisfies that, a.s.
\begin{align}\label{def-Phi}
  d\Phi^N_{t,s}(A^N)=\Phi^N_{t,s}(A^N)A^N(s)ds,\quad s\in[t,T],\quad \Phi^N_{t,t}(A^N)=I_{Nd\times Nd}.
\end{align}
By the uniform estimate on $a_{kl}^N=(a_{kl}^N(t))_{t\in[0,T]}$ above, for $t\geq0$ fixed, we also have that
\begin{align}\label{eq:diemphi}
\left|(\Phi^{N}_{t+s,u}(A^N))_{kl}\right|\leq C,\quad \text{if}~k=l;\quad \left|(\Phi^{N}_{t+s,u}(A^N))_{kl}\right|\leq C/(Nd),\quad \text{if}~k\neq l,
\end{align}
for all $s\in[0,T-t]$ and $u\in[t+s,T]$. Here, $C>0$ is a constant depending on $T$ only. The optimality of the control $\theta_N=(\theta_N(t))_{t\in[0,T]}=(\theta_N^{(1)}(t),\theta_N^{(2)}(t))_{t\in[0,T]}$ yields that
\begin{align}\label{eq:porject}
	\theta_N(t)=P_\Theta\left(\tilde\theta_N(t)\right),\quad t\in[0,T],
\end{align}
where $P_\Theta$ is the projection on $\Theta$, and
\begin{align}\label{eq:tildethetaN}
	\tilde\theta_N(t)&=\arg\min_{\theta\in\Theta}\sum_{i=1}^N\left\langle\theta^{(1)}S(i,t)+\theta^{(2)},p_N^i(t)\right\rangle+{\rm tr}\left(\theta^{(1)}\left(\theta^{(1)}\right)^\top\right)+\left\langle\theta^{(2)},\theta^{(2)}\right\rangle.
\end{align}
Note that the functional on RHS of \eqref{eq:tildethetaN} is  quadratic in the control variable, and hence admits a unique minimizer:
\begin{align}\label{tilde-theta}
	\tilde\theta^{(1)}_N(t)&=-\frac12\sum_{i=1}^Np_N^i(t)S(i,t)^\top,\quad \tilde\theta^{(2)}_N(t)=-\frac12\sum_{i=1}^Np_N^i(t),\quad t\in[0,T].
\end{align}
Let $t\geq0$ and $\delta>0$. It follows from \eqref{eq:porject} that $\Ex[\sup_{0\leq s\leq\delta}|\theta_N(t+s)-\theta_N(t)|]\leq\Ex[\sup_{0\leq s\leq\delta}|\tilde\theta_N(t+s)-\tilde\theta_N(t)|]$. On the other hand, we deduce from \eqref{tilde-theta} that, for all $s\in[0,\delta]$,
\begin{align*}
  \left|\tilde\theta^{(1)}_N(t+s)-\tilde\theta^{(1)}_N(t)\right|\leq\frac{\|S\|_{\infty}}{2}\left|\sum_{i=1}^N(p_N^i(t+s)-p_N^i(t))\right|+\frac12\left|\sum_{i=1}^Np_N^i(t)(S(i,t+s)-S(i,t))^\top\right|,
\end{align*}
and $|\tilde\theta^{(2)}_N(t+s)-\tilde\theta^{(2)}_N(t)|\leq\frac12\sum_{i=1}^N|p_N^i(t+s)-p_N^i(t)|$. Below, let $C>0$ be a generic constant which depends on $T,d$ only, but it may be different from line to line. Using the condition satisfied by $S(t,x,\eta)$ in \eqref{eq:dynamf}, the probabilistic representation \eqref{expression-pt} of $p_N(t)$ and the estimate \eqref{eq:diemphi}, we obtain that, for all $s\in[0,\delta]$,
\begin{align}\label{differ-p-N}
 \Ex_t\left[\left|p_N(t+s)-p_N(t)\right|^2\right]\leq Cs\Ex_t\left[\max_{1\leq i\leq N}\sup_{v\in[t,T]}\left|X^i(v)-Y^i(0)\right|^2\right]
 \leq C\delta \left[\max_{1\leq i\leq N}|X^i(t)|^2+1\right],
\end{align}
and meanwhile, it holds that
\begin{align}\label{conclude-1}
  \Ex_t\left[\sup_{s\in[0,\delta]}\left|\sum_{i=1}^Np_N^i(t)(S(i,t+s)-S(i,t))^\top\right|^2\right]\leq C\delta\left[\max_{1\leq i\leq N}|X^i(t)|^2+1\right].
\end{align}
Thus, in light of \eqref{differ-p-N} and \eqref{conclude-1}, we arrive at
\begin{align}\label{L-tight}
  \Ex\left[\sup_{s\in[0,\delta]}\left|\tilde\theta_N(t+s)-\tilde\theta_N(t)\right|^2\right]\leq C\delta\Ex\left[\max_{1\leq i\leq N}|X^i(t)|^2+1\right].
\end{align}
Hence, from \eqref{L-tight} and the moment estimate of $X^i(t)$ for $i=1,\ldots,N$, it deduces that
\begin{align*}
  \Ex\left[\sup_{s\in[0,\delta]}\int_0^{T-s}\left|\tilde\theta_N(t+s)-\tilde\theta_N(t)\right|dt\right]
  \leq\int_0^T\Ex\left[\sup_{s\in[0,\delta]}\left|\tilde\theta_N(t+s)-\tilde\theta_N(t)\right|\right]dt\leq C\sqrt{\delta}.
\end{align*}
This yields from Chebyshev's inequality that, for any $\varepsilon>0$,
\begin{align}\label{eq:lamdachebi0}
  &\lim_{\delta\to0}\sup_{N\geq1}\mathbb P\circ\tilde\theta_N^{-1}\left(\left\{h\in L^1\left([0,T];\Theta\right);~{\sup_{s\in[0,\delta]}}\int_0^{T-s}\left|h(t+s)-h(t)\right|dt>\varepsilon\right\}\right)=0.
\end{align}
Note that $\tilde\theta_N$ is $\Theta$-valued, and $\Theta$ is compact. Then, Lemma A.2 in \cite{Barbu18} yields the tightness of $(Q_N)_{N\geq1}$ with $Q_N$ being the relaxed control associated with $\tilde\theta_N=(\tilde\theta_N(t))_{t\in[0,T]}$. Therefore, we have shown the compactness of sequence of minimizers for $J_N(Q)$ over all $Q\in{\cal Q}(\nu)$. Note also that $J_N\overset{\Gamma}{\to}J$, as $N\to\infty$ {(c.f. Remark~\ref{eq:lambda2eqn0})}, and thus the convergent subsequence of minimizers $(Q_N)_{N\geq1}$ for the objective functional $J_N$ converges to the minimizer $Q$ of the limit objective functional $J$, as $N\to\infty$.

\end{document}